\numberwithin{equation}{section}
\newtheorem{prop}{Proposition}[section]
\newtheorem{theo}[prop]{Theorem}
\newtheorem{lemm}[prop]{Lemma}
\newtheorem{coro}[prop]{Corollary}
\newtheorem{rema}[prop]{Remark}
\newtheorem{defi}[prop]{Definition}
\def\begeq{\begin{equation}}
\def\endeq{\end{equation}}
\def\and{\quad{\rm and}\quad}
\def\<{\langle}
\def\>{\rangle}
\def\e{\epsilon}
\def\di{\displaystyle}
\def\Dint{\displaystyle\int}
\begin{document}
\title{A geometric characterization of a sharp Hardy inequality}
\author{Roger T. Lewis, Junfang Li, and Yanyan Li}
 \address{Department of Mathematics\\
         University of Alabama at Birmingham\\
         Birmingham, AL 35294}
 \email{rtlewis@uab.edu}

 \address{Department of Mathematics\\
         University of Alabama at Birmingham\\
         Birmingham, AL 35294}
 \email{jfli@uab.edu}

\address{Department of Mathematics\\
         Rutgers University\\
         New Brunswick, NJ 08903}
\email{yyli@math.rutgers.edu}

\thanks{Research of the second author was supported in part by NSF DMS-1007223. Research of the third author was supported in part by NSF DMS-0701545.}

\begin{abstract}
  In this paper, we prove that the distance function of an open connected set in $\mathbb R^{n+1}$ with a $C^{2}$ boundary is superharmonic in the distribution sense if and only if the boundary is {\em weakly mean convex}. We then prove that Hardy inequalities with a sharp constant hold on {weakly mean convex} $C^{2}$ domains. Moreover, we show that the {weakly mean convexity} condition cannot be weakened. We also prove various improved Hardy inequalities on mean convex domains along the line of Brezis-Marcus \cite{BM}.
 \end{abstract}

\subjclass{35R45, 35J20}
\keywords{Hardy inequality, mean curvature, Finsler manifold}

\maketitle

\section{Introduction}\label{introduction}

When $n\ge2$, the well-known Hardy inequality states that
\begin{equation}
  \left (\frac{n-1}2\right )^2\di\int_{\mathbb R^{n+1}}\frac{|u( x )|^2}{| x |^2}d x
  \le \di\int_{\mathbb R^{n+1}}|\nabla u|^2d x ,\quad u\in
  C^\infty_0(\mathbb R^{n+1}),
  \label{hardy}
\end{equation}
where $C^\infty_0(\mathbb R^{n+1})$ denotes the set of 
$C^\infty$ functions on $\mathbb R^{n+1}$ with compact support.

For
 domains with boundaries, Hardy's inequality can be formulated in terms of the distance function from points in the domain to the boundary.
In this paper, a domain is an open connected subset of
a Euclidean space.
 The following Hardy-type inequality on domains has been studied by several authors:
\begin{equation}
  \di\int_{\Omega}|\nabla f( x )|^pd x \ge c(n,p,\Omega)\di\int_{\Omega}\frac{|f(x)|^p}{\delta( x )^p}d x , \quad f\in C^{\infty}_0(\Omega),
  \label{hardy domain}
\end{equation}
where $\Omega\subset \mathbb R^{n+1}$ is a domain with nonempty
boundary, $n\ge 1$, $1<p<\infty$, and $\delta( x ):=\inf_{ y \in \mathbb{R}^{n+1}\setminus \Omega} dist( x , y )$. For a {\em convex domain} $\Omega$ in $\mathbb R^{n+1}$, $n\ge 1$, the best constant is
\begin{equation}
  c(n,p,\Omega)=\left (\frac{p-1}p\right)^p,
\end{equation}
see \cite{MMP} and \cite{MS}.

When $n\ge2$, many results for the Hardy inequality assume that the domain is convex.
However, there are indications that the Hardy inequality should hold for non-convex domains as well.
Filippas, Maz'ya, and Tertikas \cite{FMT1} proved that in a small enough tubular neighborhood of the boundary of a
bounded domain, a Hardy-Sobolev inequality holds. In \cite{FMT2}, they showed that a Hardy-Sobolev inequality holds if a bounded $C^2$ domain $\Omega\subset \mathbb R^{n+1}$, $n\ge2$, satisfies the condition
\begin{equation}\label{global condition 2}
  -\Delta \delta({ x })\ge0,
\end{equation}
(see Theorem 1.1 (i) and condition (C) in \cite{FMT2}) .

Filippas, Moschini, and Tertikas \cite{FMoT} proved an improved Hardy inequality for domains satisfying
\begin{equation}
  -\mbox{div}(| x |^{1-n}\nabla \delta( x ))\ge 0,\quad \mbox{a.e. in}\ \Omega,
  \label{global condition}
\end{equation}
see another proof in \cite{AL}.

Notice that conditions (\ref{global condition 2}) and (\ref{global
condition}) are global conditions. Namely, they depend on the
property of the whole domain which can make them hard to verify.
As a consequence, there are few known non-convex examples for
application. In fact, the only examples stated in \cite{FMoT}
satisfying condition (\ref{global condition}) are balls $B_R$.
Convex domains are known to satisfy condition (\ref{global
condition 2}). For non-convex domains, a ring torus is shown to
satisfy (\ref{global condition 2}) by Armitage and Kuran in
\cite{AK}. The superharmonicity of the same example has been shown
to hold off a measure zero set in a recent work of Balinsky,
Evans, and Lewis \cite{BEL}. For other non-convex domains, Hardy
type inequalities are proved to be true for small enough tubular
neighborhood of a surface \cite{FMT1} and convex domains with
punctured balls \cite{AL}.

Clearly the convexity assumption is very restrictive.
On the other hand, there are smooth bounded domains on which Hardy's inequality fails with the sharp constant,
(see \cite{MS,MMP}). It has been an outstanding question as to whether there is a more
general criteria for domains for
which a sharp Hardy inequality holds.

We will give an affirmative answer to this question. To illustrate the main idea, we first recall for a domain $\Omega\subset \mathbb R^{n+1}$ with $C^2$ boundary $\partial\Omega$, the principal curvatures with respect to the outward unit normal
\[\kappa=(\kappa_1,\cdots,\kappa_n)\]
at a point on the boundary are defined as the eigenvalues of the second fundamental form with respect to the induced metric. It is well-known that a bounded domain with a $C^2$ boundary is strictly convex if and only if $\kappa_i>0$ for each $i=1,\cdots,n$, (see e.g. chapter 13 of \cite{Th}).

The trace of the second fundamental form is defined as the mean curvature $H=\sum_i\kappa_i$, where we adopt the convention that a standard unit sphere $\mathbb S^n\subset\mathbb R^{n+1}$ has mean curvature $n$ everywhere. We now recall the definition of a mean convex domain.

\begin{defi}
  Suppose $\Omega\subset \mathbb R^{n+1}$ is 
a domain 
 with $C^2$ boundary $\partial \Omega$. We say $\Omega$ or
  $\partial\Omega$ is (strictly) {\em mean convex}, if the mean curvature $H( y )>0$ for all
  $ y \in \partial \Omega$; and {\em weakly mean convex}, if $H( y )\ge0$, for all
  $ y \in \partial \Omega$.
\end{defi}

The mean convexity condition is a much weaker condition than the convexity condition since the fundamental group of a convex domain has to be trivial while for a mean convex domain it may be non-trivial. For example, a ring torus with minor radius $r$ and major radius $R$ satisfying $R>2r$ has
positive mean curvature $H>0$ everywhere. When $R=2r$, this ring torus is called a {\em critical} ring torus.
Other non-convex examples include a small perturbation of the above ring torus, a torus with
high genus, a long cow horn, etc. Another highly interesting surfaces from differential geometry are minimal surfaces
which have $H\equiv 0$ everywhere and may possess rich topological and geometric structure.\\

There has been an increasing amount of attention in recent studies of partial differential equations
and associated inequalities on $\Omega\subset\mathbb{R}^n$ devoted to the effects of curvature of the boundary
$\partial\Omega$. In particular, the important role of the mean curvature for points on $\partial\Omega$ has
been investigated recently, e.g., see Harrell \cite{H}, Harrell and Loss \cite{HL}, and Ghoussoub and Robert \cite{GR}.
Curvature-induced
bound states in quantum wave guides arise in work of Duclos and Exner \cite{DE}.  More recently in \cite{BEL},
curvature is shown to be an important consideration in the study of Hardy-type inequalities. We continue those studies here.

{\it 
From now on, unless otherwise stated,
we use $\Omega\subset \mathbb R^{n+1}$ to denote
a domain with   $C^{2}$ boundary, $n\ge 1$.
}

We now state one of our main theorems in this paper.

\begin{theo}\label{sharp hardy inequality}
  Suppose $\Omega\subset\mathbb R^{n+1}$ is weakly mean convex, then for any $f\in C^{\infty}_0(\Omega)$, with $p>1$, the following holds
    \begin{equation}
  \di\int_{\Omega}|\nabla f( x )|^pd x \ge c(n,p,\Omega)\di\int_{\Omega}\frac{|f(x)|^p}{\delta( x )^p}d x ,
    \label{sharp hardy inequality : equ1}
  \end{equation}
  where $c(n,p,\Omega)=(\frac{p-1}p)^p$ is the best constant. Moreover, equality in (\ref{sharp hardy inequality : equ1}) can not be achieved by non-zero functions.\\
\end{theo}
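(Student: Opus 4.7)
The proof rests on the distributional superharmonicity of $\delta$ on weakly mean convex domains (announced in the abstract), namely $-\Delta\delta\ge 0$ in $\mathcal{D}'(\Omega)$. Granting that, I would derive \eqref{sharp hardy inequality : equ1} by the classical vector-field (``virial'') method, then verify sharpness of the constant by a boundary concentration argument and non-attainment by analyzing the equality cases along the way.

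\textbf{From superharmonicity to the inequality.}
For $f\in C^\infty_0(\Omega)$, consider the vector field
\[
X := \delta^{1-p}\nabla\delta,\qquad |X|=\delta^{1-p}\ \text{a.e.,}
\]
for which formally
\[
\operatorname{div}X \;=\; -(p-1)\delta^{-p}+\delta^{1-p}\Delta\delta.
\]
Since $\delta$ is globally Lipschitz with $|\nabla\delta|=1$ a.e.\ and is $C^2$ in a one-sided tubular neighborhood of $\partial\Omega$, $\operatorname{div}X$ is well defined as a distribution on $\Omega$, and $-\Delta\delta\ge0$ yields $\delta^{1-p}\Delta\delta\le 0$ distributionally. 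Pairing $\operatorname{div}X$ with the non-negative test object $|f|^p$ (made rigorous by mollifying $\delta$ on $\{\delta\ge\e\}$ and passing to the limit $\e\downarrow 0$ via dominated convergence, using that $f$ is smooth and compactly supported) produces
\[
(p-1)\int_\Omega\frac{|f|^p}{\delta^p}\,dx
\;\le\; p\int_\Omega \delta^{1-p}|f|^{p-2}f\,\nabla\delta\cdot\nabla f\,dx
\;\le\; p\int_\Omega \frac{|f|^{p-1}|\nabla f|}{\delta^{p-1}}\,dx,
\]
where the last step uses Cauchy--Schwarz together with $|\nabla\delta|=1$. Hölder's inequality with conjugate exponents $p/(p-1)$ and $p$ applied to the right-hand side yields \eqref{sharp hardy inequality : equ1} with constant $((p-1)/p)^p$. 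The main technical obstacle here is rigorously justifying the distributional integration by parts against the singular weight $\delta^{1-p}$; I expect to handle this by a two-step smoothing (convolving $\delta$ inside an inner $\e$-tube, using $|\nabla\delta|=1$ and monotone/dominated convergence, and exploiting that the cut locus has measure zero).

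\textbf{Sharpness and non-attainment.}
To rule out improvement of the constant, I would pick a boundary point $y_0$, work in the tubular normal coordinates valid near $y_0$, and test with a family $f_\e(x) = \chi(x)\,\eta_\e(\delta(x))\,\delta(x)^{(p-1)/p}$, where $\chi$ is a fixed cutoff near $y_0$ and $\eta_\e$ truncates as $\delta\to 0$ and as $\delta\to\e$. A direct calculation, with the leading-order behaviour dictated by the flat half-space extremizer (the $C^2$ boundary being flat to first order at $y_0$), gives $\int|\nabla f_\e|^p\big/\int|f_\e|^p\delta^{-p}\to((p-1)/p)^p$, so the constant is sharp. For non-attainment, suppose some $f\in C^\infty_0(\Omega)$ achieves equality in \eqref{sharp hardy inequality : equ1}. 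Then equality must propagate through each step of the chain above: equality in Cauchy--Schwarz forces $\nabla f=\operatorname{sgn}(f)|\nabla f|\nabla\delta$ a.e., equality in Hölder forces $|\nabla f|=\tfrac{p-1}{p}\delta^{-1}|f|$ a.e., and together these yield $|f|(x)=c(y)\,\delta(x)^{(p-1)/p}$ along a.e.\ integral curve of $\nabla\delta$ emanating from $\partial\Omega$, where $c(y)$ is constant along each such curve. The compact support of $f$ inside $\Omega$ forces $f\equiv 0$ in a neighborhood of $\partial\Omega$, hence $c(y)\equiv 0$, hence $f\equiv 0$ throughout $\Omega$.
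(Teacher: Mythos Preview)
Your proposal is correct and follows essentially the same strategy as the paper: the key input is the distributional inequality $-\Delta\delta\ge 0$ on weakly mean convex $C^2$ domains (Theorem~\ref{key theorem}/Theorem~\ref{equivalence theorem} in the paper), and from there one derives the Hardy inequality by testing against $|f|^p/\delta^{p-1}$. The only difference is in the algebraic packaging of this last step. The paper (Lemma~\ref{hardy : lemma 2}) uses the pointwise convexity inequality $|X|^p-|Y|^p\ge p|Y|^{p-2}\langle X-Y,Y\rangle$ with $X=\nabla f$, $Y=\tfrac{p-1}{p}\tfrac{f}{\delta}\nabla\delta$, which gives the inequality in one stroke and, importantly, isolates the remainder $\int\nabla\delta\cdot\nabla(|f|^p/\delta^{p-1})$ that feeds directly into the improved Brezis--Marcus estimates (Theorems~\ref{main theorem} and \ref{improved hardy : Lp}); you instead integrate by parts and finish with H\"older. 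Both routes are standard and equivalent for proving \eqref{sharp hardy inequality : equ1}. Two minor remarks: your concern about the ``singular weight $\delta^{1-p}$'' is overstated, since $f\in C^\infty_0(\Omega)$ implies $\delta$ is bounded below on $\mathrm{supp}\,f$ and $|f|^p/\delta^{p-1}$ is merely Lipschitz, handled (as the paper does) by a routine density argument; and for sharpness the paper simply cites \cite{MMP}, while for non-attainment your equality-case analysis is the natural argument (for $p=2$ the paper's exact identity in Lemma~\ref{hardy identity} makes this transparent via the term $\int|\nabla f-\tfrac{f\nabla\delta}{2\delta}|^2$).
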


In general, the best constant in (\ref{sharp hardy inequality : equ1}) for $p>1$ is given by
\begin{equation}
  \mu_p(\Omega):=\di\inf_{u\in W_0^{1,p}(\Omega)}\frac{\int_{\Omega}|\nabla u|^p}{\int_{\Omega}|u/\delta|^p}
  \label{best constant : hardy}
\end{equation}
which we denote by $\mu(\Omega)$ when $p=2$. For convex domains
$\mu(\Omega)=1/4$, (see \cite{MS,MMP}), but there are smooth bounded domains such that $\mu(\Omega)<1/4$.
For smooth bounded domains and $p=2$, the infimum in (\ref{best constant : hardy}) is achieved if and only if $\mu(\Omega)<1/4$.

For a bounded domain with $C^2$ boundary $\partial \Omega$ we know that $\mu_p(\Omega)\le (\frac{p-1}p)^p$ (see \cite{MMP}). On the other hand, inequality (\ref{sharp hardy inequality : equ1})
implies that $\mu(\Omega)\ge (\frac{p-1}p)^p$ for weakly convex domains. As a consequence of Theorem \ref{sharp hardy inequality},
we now know that $\mu(\Omega)=(\frac{p-1}p)^p$ for bounded {\em weakly mean convex} domains with a $C^{2}$ boundary.

This boundary geometric condition is also sharp in the sense that the condition $H\ge0$ can not be weakened. Explicit examples are constructed in section \ref{main results} showing that the sharp Hardy inequality fails if the boundary condition is weakened to $H\ge -\epsilon$, for any $\epsilon>0$.

Moreover, neither the diameter nor the interior radius of the domain $\Omega$ in (\ref{sharp hardy inequality : equ1}) need to be bounded. Many of the previous theorems need to assume that the domain is either bounded or the interior radius is bounded. \\

In this paper, we will also prove a Brezis-Marcus type of improved Hardy inequality. In the case $p=2$, Brezis and Marcus \cite{BM} proved the following inequality for bounded domains with $C^2$ boundary
\begin{equation}
  \di\int_{\Omega}|\nabla u|^2d x \ge \frac{1}{4}\di\int_{\Omega}(u/\delta)^2d x +\Lambda\int_{\Omega} u^2d x , \forall u\in H^1_0(\Omega),
  \label{BM}
\end{equation}
where $\Lambda$ is the best constant defined as
\begin{equation}
  \Lambda : = \di\inf_{\int_{\Omega}f^2d x =1}\Big[\int_{\Omega}|\nabla f|^2d x -\frac{1}{4}\int_{\Omega}\frac{f^2}{\delta^2}d x \Big].
  \label{BM best}
\end{equation}

When $\Omega$ is a {\em convex domain}, they showed that $\Lambda\ge\lambda_{BM}:= \frac{1}{\mbox{4 diam}^2(\Omega)}$ which gave an improved Hardy inequality with a positive remainder term. Along this line, there have been intensive studies on improved Hardy type inequalities recently, see e.g. \cite{BM,BFT,DD,HHL,FT,VZ,T,FMT,FMoT,AL,EL,BEL} and the references therein. For the most part the estimates are given for convex domains. For example, Hoffmann-Ostenhof, M., Hoffmann-Ostenhof, Th., and Laptev \cite{HHL} proved $\Lambda\ge\lambda_{HHL}:= \frac{c(n)}{|\Omega|^\frac{2}{n+1}}$ for $n\ge1$, where $c(n)=\frac{(n+1)^\frac{n-1}{n+1}|\mathbb S_{n}|^\frac{2}{n+1}}{4}$ and $|\mathbb S_n|$ is the area of the unit sphere; Filippas, Maz'ya, and Tertikas \cite{FMT} proved $\Lambda\ge\lambda_{FMT}:= \frac{3}4 R_{int}^{-2}$ for $n\ge2$, where $R_{int}:=\sup_{ x \in \Omega}\delta( x )$; Evans and Lewis \cite{EL} proved $\Lambda\ge \lambda_{EL}:=6\lambda_{HHL}$; Avkhadiev and Wirths \cite{AW} proved $\Lambda\ge\lambda_{AW}:=j_0^2 R_{int}^{-2}$ where $j_0=0.940\cdots$ is the first positive root of an equation of Bessel's function. Results in \cite{HHL}, \cite{FMT}, \cite{EL} and \cite{AW} improved the estimate for $\Lambda$ in \cite{BM}.

Below we will give an improved inequality on {\em weakly mean convex} domains along the line of Brezis-Marcus.

\begin{theo}\label{main theorem}
{\em( Improved Hardy-Brezis-Marcus Inequality)} 
  Suppose $\Omega\subset\mathbb R^{n+1}$ is {\em weakly mean convex} and assume that
$H_0:=\inf_{ x \in\partial\Omega}H(x)\ge0$,
then for any $f\in C^{\infty}_0(\Omega)$
    \begin{equation}
      \Dint_{\Omega}|\nabla f|^2 d x \ge\di \frac{1}{4}\Dint_\Omega\frac{|f|^2}{\delta^2}d x +\lambda(n,\Omega)\Dint_\Omega |f|^2d x,
    \label{main theorem : equ2}
  \end{equation}
  where
  $\lambda(n,\Omega)=\di\inf_{ x \in\Omega}\frac{-\Delta\delta( x )}{2\delta( x )}\ge\frac{2}{n}H^2_0$.
\end{theo}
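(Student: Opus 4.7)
The plan is to substitute $f = \delta^{1/2}g$ with $g\in C^\infty_0(\Omega)$, expand $|\nabla f|^2$, and integrate by parts so as to produce both the Hardy term and a distributional pairing with $-\Delta\delta$, which is non-negative by the superharmonicity result of the previous section.

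Concretely, setting $\phi=\delta^{1/2}$ and writing $f=\phi g$, one has $|\nabla f|^2 = g^2|\nabla\phi|^2 + \phi\,\nabla\phi\cdot\nabla(g^2) + \phi^2|\nabla g|^2$. Integrating by parts the middle term and using $|\nabla\delta|=1$ a.e.\ should give the identity
\[
\int_\Omega|\nabla f|^2\,dx \;=\; \int_\Omega\frac{f^2}{4\delta^2}\,dx \;+\; \left\langle\frac{-\Delta\delta}{2\delta},\,f^2\right\rangle \;+\; \int_\Omega\delta\,|\nabla g|^2\,dx.
\]
Dropping the last, non-negative, term and bounding the pairing below by $\lambda(n,\Omega)\int_\Omega f^2\,dx$ then yields (\ref{main theorem : equ2}).

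Next, to verify $\lambda(n,\Omega)\ge 2H_0^2/n$, at any $x$ in the smooth locus of $\delta$ let $y\in\partial\Omega$ be the unique nearest boundary point and let $\kappa_1,\ldots,\kappa_n$ be the principal curvatures of $\partial\Omega$ at $y$. The classical tube formula gives
\[
-\Delta\delta(x)=\sum_{i=1}^n\frac{\kappa_i}{1-\kappa_i\,\delta(x)},
\]
with $\kappa_i\,\delta(x)<1$ for every $i$. Writing $a_i=\kappa_i\,\delta(x)$ and $s=\sum_i a_i=H(y)\,\delta(x)$, the convexity of $t\mapsto 1/(1-t)$ on $(-\infty,1)$ together with Jensen's inequality gives $\sum_i 1/(1-a_i)\ge n/(1-s/n)$, hence $\sum_i a_i/(1-a_i)\ge s/(1-s/n)$. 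Because $s\ge H_0\,\delta(x)\ge 0$ and $s<n$, the elementary identity $(n-2s)^2\ge 0$ rearranges to $s/(1-s/n)\ge 4s^2/n$, and dividing by $2\delta(x)^2$ produces the pointwise bound $-\Delta\delta(x)/(2\delta(x))\ge 2H(y)^2/n\ge 2H_0^2/n$ on the smooth locus.

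The main technical point is justifying the integration by parts, since $\delta$ is only $C^{1,1}$ near $\partial\Omega$ and is non-smooth on the cut locus. This is handled by interpreting $-\Delta\delta$ as a non-negative Radon measure (provided by the distributional characterization of weak mean convexity from the previous section) and observing that $f^2/(2\delta)\in C^\infty_c(\Omega)$ is an admissible test function: the absolutely continuous part of $-\Delta\delta$ agrees with the tube formula above and satisfies the pointwise bound just derived (so its contribution is at least $\lambda\int f^2$, since the cut locus has Lebesgue measure zero), while the singular part is a non-negative measure and contributes non-negatively to the pairing.
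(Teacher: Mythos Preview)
Your approach is essentially the same as the paper's. The ground-state substitution $f=\delta^{1/2}g$ produces exactly the identity the paper proves in Lemma~\ref{hardy identity} (indeed $\delta|\nabla g|^2=\bigl|\nabla f-\tfrac{f\nabla\delta}{2\delta}\bigr|^2$, and your middle term is precisely $\int_\Omega\nabla\delta\cdot\nabla\tfrac{f^2}{2\delta}$). Your Jensen argument for $\sum\kappa_i/(1-\kappa_i\delta)\ge nH/(n-\delta H)$ is the AM--HM inequality the paper packages as the Newton--MacLaurin inequality in Proposition~\ref{prop : mean convex}, and your $(n-2s)^2\ge0$ rearrangement is the paper's minimization of $\phi(t)=1/(at-t^2)$ in different clothing. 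The handling of the cut locus via the distributional inequality from Theorem~\ref{key theorem} is also the same.

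One technical slip to fix: you assert $g\in C^\infty_0(\Omega)$ and that $f^2/(2\delta)\in C^\infty_c(\Omega)$ is an admissible test function. Neither is true, since $\delta$ fails to be smooth on the singular set $S$; both functions are merely Lipschitz with compact support in $\Omega$ (because $\delta$ is Lipschitz and bounded below on $\operatorname{supp}f$). This is not fatal---the distributional inequality (\ref{distribution sense}) extends from $C^\infty_0$ to compactly supported Lipschitz test functions by the standard density argument the paper invokes---but you should state it correctly, especially as you yourself note that $\delta$ is non-smooth on the cut locus in the same paragraph.
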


The $L^p$ version of this theorem is stated in Theorem \ref{improved hardy : Lp}. The constant $\lambda(n, \Omega)$ in Theorem \ref{main theorem} depends on $\Omega$. In general, $\lambda(\Omega)>\frac{2}{n}H^2_0$, but we will show that if $\Omega$ is a ball, then $\lambda(\Omega)=\frac{2}{n}H^2_0$. More specifically, we have the following corollary of Theorem \ref{main theorem}.

\begin{coro}
  \label{on balls}
For any $f\in C^{\infty}_0(B_R)$, the following holds:
  \begin{equation}
    \begin{array}{rll}
      \Dint_{B_R}|\nabla f|^2 d x \ge&\di \frac{1}{4}\Dint_{B_R}\frac{|f|^2}{\delta^2}d x +\lambda(n,R)\Dint_{B_R}|f|^2d x,
    \label{on balls : equ2}
  \end{array}
  \end{equation}
  where $\lambda(n,R)=\frac{2n}{R^2}$.
\end{coro}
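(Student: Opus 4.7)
The corollary is essentially an instance of Theorem \ref{main theorem} once we identify the geometric quantities for $\Omega=B_R$. First I would verify the hypotheses: the boundary $\partial B_R$ is a round $n$-sphere of radius $R$ in $\mathbb{R}^{n+1}$, whose principal curvatures with respect to the outward unit normal are all equal to $1/R$, so its mean curvature is the constant $H\equiv n/R$. In particular $B_R$ is strictly mean convex and
\[
H_0=\inf_{y\in\partial B_R}H(y)=\frac{n}{R}>0,
\]
so Theorem \ref{main theorem} applies.

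Next I would invoke Theorem \ref{main theorem} directly: for every $f\in C^\infty_0(B_R)$,
\[
\int_{B_R}|\nabla f|^2\,dx\;\ge\;\frac{1}{4}\int_{B_R}\frac{|f|^2}{\delta^2}\,dx+\lambda(n,B_R)\int_{B_R}|f|^2\,dx,
\]
with $\lambda(n,B_R)\ge\frac{2}{n}H_0^2$. Plugging in $H_0=n/R$ gives
\[
\lambda(n,B_R)\;\ge\;\frac{2}{n}\cdot\frac{n^2}{R^2}\;=\;\frac{2n}{R^2},
\]
which is exactly the constant claimed in the corollary. Since the inequality is monotone in the coefficient of $\int|f|^2$, we may replace $\lambda(n,B_R)$ by the smaller value $2n/R^2$, and \eqref{on balls : equ2} follows.

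For completeness (and to match the assertion from the introduction that equality $\lambda(B_R)=\frac{2}{n}H_0^2$ holds on balls), I would also compute $\lambda(n,B_R)$ explicitly. Inside $B_R$ one has $\delta(x)=R-|x|$, and since $\Delta|x|=n/|x|$ in $\mathbb{R}^{n+1}$ away from the origin,
\[
\frac{-\Delta\delta(x)}{2\delta(x)}=\frac{n}{2|x|\bigl(R-|x|\bigr)}.
\]
As a function of $t=|x|\in(0,R)$, the denominator $t(R-t)$ is maximized at $t=R/2$ with value $R^2/4$. Hence
\[
\lambda(n,B_R)=\inf_{x\in B_R}\frac{-\Delta\delta(x)}{2\delta(x)}=\frac{n}{2\cdot R^2/4}=\frac{2n}{R^2},
\]
confirming both that the constant $2n/R^2$ in the corollary is exactly $\lambda(n,B_R)$ and that the bound $\lambda\ge \tfrac{2}{n}H_0^2$ in Theorem \ref{main theorem} is sharp for balls. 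The only mildly delicate point is that $\delta$ fails to be smooth at the origin, but this creates no issue because $-\Delta\delta=n/|x|$ is already an $L^1_{\mathrm{loc}}$ representative and the infimum is attained on the smooth locus $\{|x|=R/2\}$.
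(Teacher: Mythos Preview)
Your proposal is correct and follows exactly the route the paper intends: Corollary~\ref{on balls} is stated as an immediate consequence of Theorem~\ref{main theorem}, and the paper remarks just before it that for a ball one has $\lambda(\Omega)=\frac{2}{n}H_0^2$. Your explicit computation of $\frac{-\Delta\delta}{2\delta}=\frac{n}{2|x|(R-|x|)}$ and its infimum $2n/R^2$ is precisely the verification of that remark, and the observation that $\kappa_1=\cdots=\kappa_n=1/R$ (so Proposition~\ref{prop : mean convex} is an equality) is what makes the lower bound in Theorem~\ref{main theorem} sharp here.
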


In the general weakly mean convex case, it is possible that $H_0$ is zero on some subsets of the boundary, but $\lambda(n, \Omega)$ is still strictly positive. Consider the critical ring torus example with major radius $R=2$ and minor radius $r=1$. Direct calculations show that mean curvature on the inner equator is $H\equiv0$ but $\lambda(n, \Omega)=1$. More details can be found in Example 1 section \ref{examples}.

Other extreme examples, which may be of independent interest, are domains with embedded minimal surfaces as boundary.

\begin{coro}
  \label{minimal surface : coro}
Let $\Omega\subset\mathbb R^{3}$ be an open connected set which has an embedded minimal surface $\mathcal M$ as the boundary, i.e., $H(\mathbf y)\equiv 0$ for any $\mathbf y\in \mathcal M$. Let $\kappa_0:=\di\inf_{\mathbf y\in \mathcal M}|\kappa(\mathbf y)|$ be the infimum of the absolute value of all the principal curvatures. Then for any $f\in C^{\infty}_0(\Omega)$, we have the following.
    \begin{equation}
      \Dint_{\Omega}|\nabla f|^2 d\mathbf x\ge\di
      \frac{1}{4}\Dint_\Omega\frac{|f|^2}{\delta^2}d\mathbf x+\lambda(n,\Omega)\Dint_\Omega |f|^2d\mathbf x,
    \label{}
\end{equation}
where $\lambda(n, \Omega)=\kappa_0^2$.
\end{coro}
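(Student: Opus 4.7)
The plan is to invoke Theorem \ref{main theorem} directly. Since $\mathcal M$ is minimal, $H(\mathbf y)\equiv 0$ on $\partial\Omega$, so $\Omega$ is weakly mean convex with $H_{0}=0$ and the hypothesis of Theorem \ref{main theorem} is met. The inequality \eqref{main theorem : equ2} then holds with
\[
\lambda(n,\Omega)=\inf_{\mathbf x\in\Omega}\frac{-\Delta\delta(\mathbf x)}{2\delta(\mathbf x)},
\]
so the entire task reduces to the pointwise estimate $\lambda(n,\Omega)\ge\kappa_{0}^{2}$; the reverse inequality will follow from a boundary limit.

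At any $\mathbf x\in\Omega$ lying in the open tubular neighborhood on which $\delta$ is $C^{2}$, let $\mathbf y\in\mathcal M$ denote the unique nearest boundary point and $\kappa_{1}(\mathbf y),\kappa_{2}(\mathbf y)$ the principal curvatures at $\mathbf y$ with respect to the outward unit normal. The standard Riccati/Jacobi-field formula (readily checked against $\Omega=B_{R}$, for which it recovers $\Delta\delta=-n/|\mathbf x|$) gives
\[
\Delta\delta(\mathbf x)=-\sum_{i=1}^{2}\frac{\kappa_{i}(\mathbf y)}{1-\kappa_{i}(\mathbf y)\delta(\mathbf x)}.
\]
Inserting the minimality relation $\kappa_{2}=-\kappa_{1}=:-\kappa$ and simplifying produces
\[
\frac{-\Delta\delta(\mathbf x)}{2\delta(\mathbf x)}=\frac{\kappa(\mathbf y)^{2}}{1-\kappa(\mathbf y)^{2}\delta(\mathbf x)^{2}}\ge\kappa(\mathbf y)^{2}\ge\kappa_{0}^{2},
\]
which is the desired bound on the smooth locus. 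Choosing $\mathbf y_{k}\in\mathcal M$ with $|\kappa(\mathbf y_{k})|\to\kappa_{0}$ and letting $\mathbf x_{k}\to\mathbf y_{k}$ along the inward normal shows the ratio also tends to $\kappa_{0}^{2}$ from above, yielding $\lambda(n,\Omega)=\kappa_{0}^{2}$ exactly.

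The principal technical point to address is that the Riccati identity is a pointwise equality only on the open set where $\delta$ is $C^{2}$; on the cut locus $\delta$ is merely Lipschitz and $\Delta\delta$ must be interpreted as a distribution. This is the same regularity issue already confronted in the proof of Theorem \ref{main theorem} itself, and is handled through the distributional superharmonicity of $\delta$ established by the first main theorem of the paper. Since the singular part of the distribution $-\Delta\delta$ supported on the cut locus is nonnegative, the pointwise bound $-\Delta\delta\ge 2\kappa_{0}^{2}\delta$ on the smooth locus promotes to the distributional inequality required as input to Theorem \ref{main theorem}, completing the argument.
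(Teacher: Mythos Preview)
Your proof is correct and follows essentially the same approach as the paper: both invoke Theorem~\ref{main theorem}, use the Hessian formula from Lemma~\ref{hessian} to write $-\Delta\delta=\sum_i \kappa_i/(1-\delta\kappa_i)$, substitute $\kappa_2=-\kappa_1=-\kappa$ from minimality, and simplify to $\frac{-\Delta\delta}{2\delta}=\frac{\kappa^2}{1-\kappa^2\delta^2}\ge\kappa^2\ge\kappa_0^2$. Your write-up is in fact more complete than the paper's, which omits both the reverse inequality (your boundary-limit argument showing the infimum is attained as $\kappa_0^2$) and any explicit discussion of the cut-locus regularity issue.
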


The proof of Corollary \ref{minimal surface : coro} can be found in section \ref{examples}.\\

One of the key 
observations of this paper is the following theorem. 
We believe it is also of independent interest.
(See section~2 for the definition of the near point and good set $G$.)
\begin{theo}
  \label{key theorem}
Let $n\ge1$, $\Omega\subset \mathbb R^{n+1}$
  and $\delta( x ):=\inf_{ y \in \mathbb{R}^{n+1}\setminus \Omega} dist(x,y)$. Then
\begin{equation}
  -\Delta \delta( x )\ge \frac{nH( y )}{n-\delta H( y )},
  \label{key theorem : equation}
\end{equation}
in the distribution sense: for any $\varphi\in C^{\infty}_0(\Omega)$, $\varphi\ge0$, we have
  \begin{equation}
    \Dint_{\Omega}\nabla\delta\nabla \varphi dx\ge \Dint_{\Omega}\frac{nH}{n-\delta H}\varphi dx,
    \label{distribution sense}
  \end{equation}
where $H( y )$ is the mean curvature at the nearest
 point $y=N( x )\in \partial \Omega$ for points $ x \in G$.
\end{theo}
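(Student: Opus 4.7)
My plan is a three-step argument: first establish the pointwise inequality on the good set $G$ using the classical tubular-coordinate formula for $\Delta\delta$; then sharpen via Jensen's inequality to the mean curvature; and finally pass to the distribution sense by analysing the sign of the singular contribution coming from the cut locus $\Sigma:=\Omega\setminus G$.

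\textbf{Pointwise computation on $G$.} For $x\in G$, let $y=N(x)\in\partial\Omega$ and parametrize by $x=y-t\,\nu(y)$ with $t=\delta(x)$ and $\nu$ the outward unit normal to $\partial\Omega$. The volume element pulls back to $\prod_{i=1}^n(1-t\kappa_i(y))\,d\sigma(y)\,dt$, where $\kappa_i$ are the principal curvatures at $y$. Since $\nabla\delta(x)=-\nu(y)$, one obtains the classical formula
\begin{equation*}
-\Delta\delta(x)=\sum_{i=1}^n\frac{\kappa_i(y)}{1-\delta(x)\kappa_i(y)}.
\end{equation*}
Membership in $G$ forces the normal exponential map to be non-singular, so $1-\delta\kappa_i>0$ for every $i$; summing yields $\delta H<n$, so the right-hand side of (\ref{key theorem : equation}) is well-defined. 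Next, the scalar function $f(\kappa):=\kappa/(1-\delta\kappa)$ satisfies $f''(\kappa)=2\delta/(1-\delta\kappa)^3>0$ on the interval $\{\delta\kappa<1\}$, hence is convex there; Jensen's inequality applied to the uniform average of $\kappa_1,\dots,\kappa_n$ gives
\begin{equation*}
\sum_{i=1}^n f(\kappa_i)\;\ge\; n\,f\bl H/n\br\;=\;\frac{nH}{n-\delta H}.
\end{equation*}
Combining, $-\Delta\delta(x)\ge nH(y)/(n-\delta(x)H(y))$ pointwise on $G$.

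\textbf{From pointwise to distributional.} Fix $\varphi\in C_0^\infty(\Omega)$ with $\varphi\ge 0$. For small $\epsilon>0$, set $U_\epsilon:=\{x\in\Omega:\operatorname{dist}(x,\Sigma)>\epsilon\}\subset G$. On $U_\epsilon$, $\delta$ is $C^2$ and the divergence theorem gives
\begin{equation*}
\int_{U_\epsilon}\nabla\delta\cdot\nabla\varphi\,dx\;=\;-\int_{U_\epsilon}(\Delta\delta)\varphi\,dx\;+\;\int_{\partial U_\epsilon\cap\Omega}\varphi\,(\nabla\delta\cdot\mathbf n)\,d\sigma,
\end{equation*}
with $\mathbf n$ the outer unit normal of $U_\epsilon$. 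Step~1 combined with $\varphi\ge 0$ gives $-\int_{U_\epsilon}(\Delta\delta)\varphi\ge\int_{U_\epsilon}\tfrac{nH}{n-\delta H}\varphi$. Since $|\nabla\delta|\le 1$ a.e.\ and $\Sigma$ has Lebesgue measure zero, dominated convergence sends $\int_{U_\epsilon}\nabla\delta\cdot\nabla\varphi\to\int_\Omega\nabla\delta\cdot\nabla\varphi$ and $\int_{U_\epsilon}\tfrac{nH}{n-\delta H}\varphi\to\int_\Omega\tfrac{nH}{n-\delta H}\varphi$. So the proof reduces to showing the flux term has non-negative lim inf as $\epsilon\to 0$.

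\textbf{The main obstacle} is this flux analysis across the cut locus. At a regular point $x_0\in\Sigma$ with exactly two nearest boundary points $y_1,y_2$, the surface $\Sigma$ is locally the perpendicular bisector of $\overline{y_1y_2}$, and an elementary computation shows that the normal component of $\nabla\delta$ jumps by a strictly negative amount across $\Sigma$, so each side contributes a non-negative amount to $\int_{\partial U_\epsilon\cap\Omega}\varphi\,\nabla\delta\cdot\mathbf n\,d\sigma$. Cut points at which three or more nearest points coexist lie in a set of lower Hausdorff dimension and contribute nothing in the limit. A clean way to bypass the geometric technicalities is to invoke that $\delta$ is locally semi-concave on $\Omega$ (a classical consequence of $\partial\Omega\in C^2$, since $\delta$ is an infimum over a compact set of uniformly semi-concave functions $y\mapsto|x-y|$): its distributional Hessian then decomposes as the classical Hessian on $G$ plus a non-positive matrix-valued singular measure supported on $\Sigma$. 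Taking the trace, the singular part of $-\Delta\delta$ is a non-negative measure, which together with the pointwise bound on $G$ yields (\ref{distribution sense}).
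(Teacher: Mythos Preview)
Your proof is correct and takes a genuinely different route from the paper in the distributional step. For the pointwise bound on $G$ you use convexity of $\kappa\mapsto\kappa/(1-\delta\kappa)$ and Jensen, while the paper uses the Newton--MacLaurin inequality $\sigma_{n-1}/\sigma_n\ge n^2/\sigma_1$ applied to $\lambda_i=1-\delta\kappa_i$; these are equivalent here and yours is the more elementary. The substantive difference is the passage to distributions. The paper constructs explicit exhaustions $\Omega_\epsilon\nearrow G$ whose inner boundaries $\Sigma_\epsilon$ satisfy $\eta_\epsilon\cdot\nabla\delta\ge 0$, so the flux term in Green's formula is non-negative; to build these it invokes the Li--Nirenberg theorem that $\bar\rho(z)=\sup\{t:z+t\eta(z)\in G\}$ is Lipschitz on $\partial\Omega$, mollifies the normalized distance $h(x)=\delta(x)/\bar\rho(N(x))$, and takes regular level sets. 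Your semi-concavity argument bypasses all of this: since $\delta$ is locally semi-concave on $\Omega$, $D^2\delta$ is a matrix-valued measure whose singular part is non-positive, so the singular part of $-\Delta\delta$ is a non-negative measure, and the pointwise bound on $G$ together with $|\Omega\setminus G|=0$ gives (\ref{distribution sense}) directly. This is shorter and avoids the Li--Nirenberg regularity input, at the cost of importing the Alexandrov-type structure theory for semi-concave functions as a black box. Your first ``direct flux'' sketch is only heuristic (regularity of $\partial U_\epsilon$ and the structure of the cut locus would need work), but you were right to fall back on semi-concavity. One small correction: your justification ``infimum over a compact set of uniformly semi-concave functions $y\mapsto|x-y|$'' is imprecise, since $\partial\Omega$ need not be compact and $D^2|x-y|\le|x-y|^{-1}I$ is not uniform; the clean statement is that on $\{\delta\ge c\}$ one has $\delta(x)-\tfrac{1}{2c}|x|^2=\inf_{y\in\partial\Omega}\bigl(|x-y|-\tfrac{1}{2c}|x|^2\bigr)$ as an infimum of concave functions, which suffices since $\varphi$ has compact support in $\Omega$.
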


It is well-known that the Hessian of the distance function is positive definite for a convex domain, see, e.g., \cite{GT}.
However, to prove Hardy-type inequalities, the full strength of a positive Hessian is not needed. Only the Laplacian of $\delta( x )$ is involved. Using Theorem \ref{key theorem}, we can reduce the global
superharmonicity condition of the distance function to a geometric boundary condition which has been intensively studied in differential geometry.

Armitage and Kuran \cite{AK} proved that
$\delta( x )$ is superharmonic if the domain is convex. They also showed by examples that the converse is not true
when $n>1$.

Moreover, we have the following equivalence theorem which states that the superharmonicity of the
Laplacian of the distance function can be uniquely characterized by the boundary mean curvature.

 \begin{theo}
   \label{equivalence theorem}{\bf(Equivalence Theorem)}
 Let $\Omega\subset\mathbb R^{n+1}$
  and $\delta( x )$ be the
 distance function to the boundary. Then $\delta( x )$ is a superharmonic function on $\Omega$ off the singular set $S$ if and only if
 $\partial \Omega$ is {\em weakly mean convex}, where $S$ is defined in (\ref{singular set}).\\
 \end{theo}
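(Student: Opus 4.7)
\medskip
\noindent\textbf{Proof Plan.} The plan is to prove the two implications separately. The ``$\Leftarrow$'' direction falls out of Theorem~\ref{key theorem}, while the ``$\Rightarrow$'' direction reduces to a local computation of $\Delta\delta$ in a one-sided tubular neighborhood of a hypothetical boundary point where $H<0$.

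For the ``weakly mean convex $\Rightarrow$ $\delta$ superharmonic off $S$'' direction, on $\Omega\setminus S$ each $x$ has a unique nearest boundary point $y=N(x)$, and Theorem~\ref{key theorem} supplies the pointwise inequality
\begin{equation*}
-\Delta\delta(x)\ge \frac{nH(y)}{n-\delta(x)H(y)}.
\end{equation*}
Wherever $N(x)$ is well defined, one has $1-\delta(x)\kappa_i(y)>0$ for every principal curvature $\kappa_i$ at $y$; in particular $n-\delta(x)H(y)>0$. If $H\ge 0$ on $\partial\Omega$, the right-hand side is nonnegative, so $-\Delta\delta(x)\ge 0$ pointwise on $\Omega\setminus S$.

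For the converse, I would argue by contradiction. Suppose some $y_0\in\partial\Omega$ has $H(y_0)<0$. Since $\partial\Omega$ is $C^{2}$, $H$ is continuous on $\partial\Omega$, so there is a relatively open neighborhood $U\subset\partial\Omega$ of $y_0$ on which $H<0$, together with a one-sided tubular neighborhood
\begin{equation*}
V=\{\,y-t\nu(y):y\in U,\ 0<t<t_0\,\}\subset\Omega\setminus S,
\end{equation*}
where $\nu$ is the outward unit normal. The inclusion $V\subset\Omega\setminus S$ holds for $t_0$ small, because the nearest-point map is a $C^{1}$ diffeomorphism in a tubular neighborhood of a $C^{2}$ hypersurface. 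On $V$ the classical formula
\begin{equation*}
-\Delta\delta(x)=\sum_{i=1}^{n}\frac{\kappa_i(N(x))}{1-\delta(x)\kappa_i(N(x))}
\end{equation*}
holds pointwise. Evaluating along the inward normal ray $x=y_0-t\nu(y_0)$ and letting $t\searrow 0$, the right-hand side tends to $\sum_i \kappa_i(y_0)=H(y_0)<0$. Hence $-\Delta\delta(x)<0$ at points arbitrarily close to $y_0$ inside $\Omega\setminus S$, contradicting the assumed superharmonicity.

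The only delicate point is in the second implication: verifying that a one-sided tubular neighborhood of $y_0$ actually sits inside $\Omega\setminus S$ and that the pointwise formula for $\Delta\delta$ applies there. Both facts are standard consequences of the $C^{2}$ regularity of $\partial\Omega$ together with the definition of $S$ in (\ref{singular set}), so the substance of the proof is really the same first-order expansion that underlies Theorem~\ref{key theorem}, now used as a local obstruction rather than as a global bound.
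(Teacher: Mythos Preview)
Your proof is correct and follows essentially the same path as the paper. The paper packages both implications into the single equality $\inf_{x\in G}(-\Delta\delta(x))=\inf_{y\in\partial\Omega}H(y)$ (Proposition~\ref{prop : inf equi}), proved via the Hessian formula of Lemma~\ref{hessian} together with the monotonicity of $\sum_i\kappa_i/(1-\delta\kappa_i)$ in $\delta$ and the limit along inward normal rays; your ``$\Rightarrow$'' direction is exactly that limit argument, and your ``$\Leftarrow$'' direction uses the slightly sharper bound $-\Delta\delta\ge nH/(n-\delta H)$ from Proposition~\ref{prop : mean convex} rather than the direct monotonicity bound $-\Delta\delta\ge H(N(x))$, but either suffices. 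One small referencing point: the \emph{pointwise} inequality on $G$ you invoke is the Corollary at the end of Section~\ref{distance function and boundary geometry} (equation~(\ref{comparison good set})) rather than Theorem~\ref{key theorem} itself, which is the distributional statement on all of $\Omega$.
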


\begin{rema}
  When $n=1$, it is well-known that mean convexity is equivalent to convexity. A more general equivalence result is stated in Proposition \ref{prop : inf equi}.
\end{rema}

Theorem \ref{key theorem} was motivated by recent work of Balinsky, Evans, and Lewis \cite{BEL} as well as Lemma 14.17 of
Gilbarg-Trudinger \cite{GT}. Lemma 14.17 of \cite{GT} was used by Flippas, Maz'ya, and Tertikas \cite{FMT2} to
estimate the upper bound of $|\delta\Delta\delta|$ when the point is close to the boundary, see Condition (R)
in \cite{FMT2}. In \cite{BEL}, a generalization of it was used by Balinsky, Evans, and Lewis to relate the Laplacian of distance function on the whole domain
(except for a set of measure zero) to the boundary principal curvatures.\\

The rest of this paper is organized as follows. In section \ref{distance function and boundary geometry}, we collect
necessary preliminaries and relate the superharmonicity of the distance function to the boundary geometry on points in the domain off the singular set. In section \ref{distribution section}, we prove
 Theorem \ref{key theorem}. In section
\ref{main results}, we give proofs to the main theorems and discuss the sharpness of
the geometric boundary conditions. In section \ref{applications}, we extend other related important inequalities to mean convex domains. In section \ref{examples}, we give non-trivial examples of non-convex domains on which Hardy type inequalities hold. \\

 \section{The distance function and boundary geometry}\label{distance function and boundary geometry}

 Let $\delta( x ):=\inf_{ y \in \mathbb{R}^{n+1}\setminus \Omega} dist(x,y)$ 
denote the distance from a point
 $ x \in\Omega$ to $\partial \Omega$. In this section, we 
recall some properties of this distance function.
For $x\in \Omega$, let
 $N_{\partial \Omega}( x ):=\{ y \in\partial\Omega :| y - x |=\delta( x )\}$ denote the
set of nearest points on $\partial \Omega$.
When $N_{\partial \Omega}( x )$ contains exactly one point,
we denote it as $N(x)$.

This distance function has been extensively studied. The main references we refer to here are \cite{GT, LN}, see also \cite{EE}.  Recall the following definition from Li-Nirenberg \cite{LN}.

\begin{defi}\label{singular set}
Let  $G\subset \Omega$ be the largest 
open subset of $\Omega$ such that for every $x$ in $G$
there is a unique nearest point on $\partial \Omega$ to $x$.
  We  call the complement of the good set $G$
 to be a singular set and denote it as $S=\Omega\setminus G$.
\end{defi}

 We know that $\delta( x )$ is locally Lipschitz continuous, cf. \cite{GT}, hence it is differentiable a.e.. Theorem 5.1.5 (\cite{EE}) implies that $\delta( x )$ is differentiable in $\Omega$ if and only if $N_{\partial\Omega}( x )$ contains only one element. In particular, it is differentiable in $G$. Hence, if $ x \in G$, then $\delta( x )$ is differentiable, $\nabla \delta( x )$ is continuous, and $\nabla \delta( x )= \frac{ x - y }{| x - y |}$ where $ y =N( x )$ is the nearest point.

 We will show that if the boundary $\partial \Omega$ is $C^2$, then $\delta( x )$ is $C^2$ in $G$. Note that a  proof of this result in the much more general setting of Finsler manifolds was given in \cite{LN, LN1}. First we have the following geometric lemma.
\begin{lemm}
  \label{positiveness}
  Let $\Omega\subset \mathbb R^{n+1}$.
 Suppose $ x \in G$ and
  let $ y =N( x )$ be the nearest point of ${x}$ on the boundary. Let
  $\kappa_i( y )$, $i=1,\cdots,n$ be the
  principal curvatures of the boundary at $ y $ with respect to the outward unit normal, then
  \begin{equation}
    1-\delta( x )\kappa_i( y )>0,
    \label{positivity}
  \end{equation}
  for all $x\in G$ and for all $i$.
\end{lemm}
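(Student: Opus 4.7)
The plan is to first establish $1-\delta(x)\kappa_i(y)\ge 0$ by a local comparison between the inscribed ball $B(x,\delta(x))$ and the osculating paraboloid of $\partial\Omega$ at $y$, and then upgrade to strict inequality using the openness of $G$.

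First I would choose local coordinates so that $y=N(x)$ is the origin, the outward unit normal there is $e_{n+1}$, and $\partial\Omega$ is locally the graph $\{z=f(u)\}$ with $f\in C^2$, $f(0)=0$, $\nabla f(0)=0$, and $\Omega = \{z<f(u)\}$ locally. After a rotation in the tangent hyperplane, one may further take $f_{ij}(0)=-\kappa_i\delta_{ij}$, so that $x = -\delta\, e_{n+1}$. Since $y$ minimizes the distance from $x$ to $\partial\Omega$, every nearby boundary point $(u,f(u))$ satisfies $|x-(u,f(u))|^2\ge \delta^2$. Inserting Taylor's expansion $f(u)=-\tfrac12\sum_i \kappa_i u_i^2 + o(|u|^2)$ yields
\[
\sum_{i=1}^n (1-\delta\kappa_i)\,u_i^2 + o(|u|^2)\ge 0,
\]
and then testing in the principal direction $u=te_i$ as $t\to 0$ gives $1-\delta\kappa_i\ge 0$.

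To obtain the strict inequality, suppose by contradiction that $1-\delta_0\kappa_{i_0}(y_0)=0$ for some $i_0$ at some $x_0\in G$, with $y_0=N(x_0)$ and $\delta_0=\delta(x_0)$. Consider the normal-flow map $\Phi(y,t)=y-t\nu(y)$ on $\partial\Omega\times(0,\infty)$; a direct computation in principal coordinates at $y_0$ shows that its Jacobian equals $\pm\prod_i(1-t\kappa_i(y))$, which vanishes at $(y_0,\delta_0)$. Since $G$ is open and $N$ is continuous on $G$ by a standard uniqueness-plus-compactness argument, the map $(N,\delta)$ serves as a continuous local right-inverse of $\Phi$ on a neighborhood $U\subset G$ of $x_0$. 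I would then exhibit a sequence $x_s\to x_0$ with two distinct nearest boundary points, constructed from pairs $y_\pm$ of boundary points on either side of $y_0$ in the principal direction $e_{i_0}$ by placing $x_s$ on the perpendicular bisector of the chord $\overline{y_-y_+}$ at the correct height so that both $y_\pm$ are nearest points to $x_s$. Since such $x_s$ lack a unique nearest point yet accumulate at $x_0$, this contradicts the openness of $G$ at $x_0$.

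The main obstacle is making this last construction rigorous, i.e., verifying that arbitrarily close to $x_0$ there are genuine points with non-unique nearest projection rather than merely points where $\Phi$ fails to be a local diffeomorphism. The naive perturbation along the normal ray $x_s = x_0 - s\nu(y_0)$ succeeds when $\partial\Omega$ is symmetric under reflection in the $e_{i_0}$-direction (as in the ball example), but in general the equidistance locus emanating from $x_0$ is tangent to the inward normal line with a higher-order contact, so $y_\pm$ must be located by solving an implicit equation rather than taken symmetrically. The quadratic degeneracy of the Hessian of $|x-\cdot|^2$ in the $e_{i_0}$-direction at $(y_0,\delta_0)$ produces solutions on a $\tfrac12$-power scale in the separation parameter, compatible with the $C^2$ regularity of $\partial\Omega$ but requiring a careful implicit-function analysis of the simultaneous equations $|x-y_1|=|x-y_2|$ with the common value being a local minimum of the distance-squared function on $\partial\Omega$.
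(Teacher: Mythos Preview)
Your derivation of the weak inequality $1-\delta(x)\kappa_i\ge 0$ via Taylor expansion is correct and is essentially the analytic version of the paper's geometric argument (comparing the inscribed ball with the osculating circle). The difference is cosmetic.

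The gap is in the passage to strict inequality. You correctly identify the obstacle yourself: constructing, under only $C^2$ regularity, a sequence $x_s\to x_0$ with genuinely non-unique nearest points is delicate, and your sketch via perpendicular bisectors and a ``$\tfrac12$-power scale'' implicit-function argument is not carried out. In fact this route is unnecessarily hard. The paper bypasses it entirely by invoking a structural property of the good set $G$ (Corollary~4.11 of Li--Nirenberg \cite{LN}): for any $x\in G$ with nearest point $y=N(x)$, one can push \emph{further} along the inward normal by some $\epsilon>0$ and remain in $G$, i.e.\ the point $x_\epsilon=y+(\delta(x)+\epsilon)\eta(y)$ lies in $G$ with $\delta(x_\epsilon)=\delta(x)+\epsilon$. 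Applying your weak inequality at $x_\epsilon$ gives $1-(\delta(x)+\epsilon)\kappa_i\ge 0$, hence $1-\delta(x)\kappa_i>0$ immediately.

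So the missing idea is not a more careful local analysis at $x_0$, but rather the observation that openness of $G$ is the wrong property to exploit; what you need is that $G$ is \emph{star-shaped along inward normals past each of its points}, which is a stronger statement and is exactly what the Li--Nirenberg result provides.
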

\begin{proof}
 Suppose $ x \in G$. Let $B_{\delta}( x )$ be
 the ball centered at $ x $ with radius $\delta$ satisfying
 $\overline B_{\delta}( x )\cap (\mathbb R^{n+1}\backslash \Omega)=\{ y \}$. We may assume $\kappa_i>0$,
 otherwise the statement is trivial. Recall the principal radius is the reciprocal of principal curvature, i.e.,
 $r_i:=\frac{1}{\kappa_i}$. It is also the radius of the osculating circle. Since the boundary is $C^2$, it is
 geometrically evident that $\delta( x )\le r_i$. Otherwise  $\overline B_{\delta}( x )$ will
 enclose the osculating circle and will intersect the boundary more than once. Equivalently, we know $1-\delta\kappa_i\ge0$.
 On the other hand, if $x\in G$, then $1-\delta(x)\kappa_i>0$.
Indeed, in view of 
Corollary 4.11 of \cite{LN},
there exists  $\epsilon>0$ such that
$$
x_t:= N(x)+[\delta(x)+t]\eta(N(x))\in G,\qquad 0<t\le \epsilon,
$$
for $\eta(N(x)):=-\nu(N(x))$ to be the unit inward normal at $N(x)$ and
$$
\delta(x_t)= \delta(x)+t.
$$
Consequently,
$$
B(x_t, \delta(x)+\epsilon)\subset G,
$$
from which we deduce $
1-\delta(x)\kappa_i>
1-[\delta(x)+\epsilon]\kappa_i\ge 0.
$
\end{proof}

Applying Lemma \ref{positiveness}, one has the following lemmas.

\begin{lemm}
  \label{C2}
   Let $\Omega\subset \mathbb R^{n+1}$.
 Then the distance function $\delta( x )$
   is  in $C^2(G)$.
\end{lemm}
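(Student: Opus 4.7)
The plan is to obtain the $C^2$ regularity of $\delta$ on $G$ by writing the normal map near the boundary as a local $C^1$ diffeomorphism, inverting it to express $N(x)$ and $\delta(x)$ as $C^1$ functions of $x$, and then using the identity $\nabla\delta(x)=-\nu(N(x))$ to conclude.

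Fix $x_0\in G$ with $y_0=N(x_0)$ and $\delta_0=\delta(x_0)>0$. Since $\partial\Omega$ is $C^2$, the outward unit normal $\nu$ is a $C^1$ map on $\partial\Omega$, so the normal map
\[
\Phi(y,t):=y-t\,\nu(y),\qquad (y,t)\in U\times(0,\infty),
\]
is $C^1$ on any coordinate patch $U\subset\partial\Omega$ around $y_0$. In a local orthonormal frame $\{e_1,\dots,e_n\}$ of $T_y\partial\Omega$ diagonalizing the second fundamental form, $\partial_{e_i}\Phi=(1-t\kappa_i(y))e_i$ and $\partial_t\Phi=-\nu(y)$, so the Jacobian of $\Phi$ at $(y_0,\delta_0)$ is $\pm\prod_{i=1}^n(1-\delta_0\kappa_i(y_0))$. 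By Lemma \ref{positiveness}, each factor is strictly positive, so this Jacobian is nonzero.

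The inverse function theorem then provides neighborhoods $V$ of $(y_0,\delta_0)$ in $\partial\Omega\times(0,\infty)$ and $W$ of $x_0$ in $\mathbb R^{n+1}$, and a $C^1$ inverse $\Psi:W\to V$ with $\Psi(x)=(y(x),t(x))$. The key step is to identify $\Psi$ with $(N,\delta)$ on $W\cap G$. For any $x\in G$, one has $x=N(x)-\delta(x)\nu(N(x))=\Phi(N(x),\delta(x))$ by the characterization of the nearest point. Since $N$ is continuous on $G$ (its continuity follows from uniqueness of the nearest point together with compactness of minimizing sequences in $\partial\Omega$), after shrinking $W$ we may assume $(N(x),\delta(x))\in V$ for every $x\in W\cap G$. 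Uniqueness of the local inverse $\Psi$ then forces $(N(x),\delta(x))=\Psi(x)$, so both $N$ and $\delta$ are $C^1$ on $W\cap G$.

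Finally, on $G$ we already know $\nabla\delta(x)=(x-N(x))/\delta(x)=-\nu(N(x))$. This is a composition of the $C^1$ maps $x\mapsto N(x)$ and $y\mapsto\nu(y)$, hence $\nabla\delta\in C^1(W\cap G)$, which gives $\delta\in C^2(W\cap G)$; as $x_0\in G$ was arbitrary, $\delta\in C^2(G)$. The main technical point is the identification step tying the abstract local inverse $\Psi$ to the geometric pair $(N,\delta)$; everything else is a direct application of the inverse function theorem made possible by Lemma \ref{positiveness}.
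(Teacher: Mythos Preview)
Your proof is correct and follows essentially the same approach as the paper: both invert the normal map $\Phi(y,t)=y-t\nu(y)$ via the inverse function theorem, using Lemma~\ref{positiveness} to guarantee a nonvanishing Jacobian, and then conclude $\delta\in C^2$ from $\nabla\delta=-\nu\circ N$. Your treatment of the identification step---using continuity of $N$ on $G$ to match the abstract local inverse $\Psi$ with the geometric pair $(N,\delta)$---is slightly more explicit than the paper's (which simply asserts that the equation $x=y-\delta\nu(y)$ ``determines $y$ and $\delta$ as $C^1$ functions of $x$''), but otherwise the arguments are the same.
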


\begin{proof}
The proof of this lemma is by the standard inverse mapping theorem which can be found in Gilbarg-Trudinger \cite{GT}. The original proof was for a small enough tubular
  neighborhood of the boundary and can be found in Lemma 14.16 in \cite{GT}. For reader's convenience, we include the
  proof here and modify it slightly for this setting.

For $y\in\partial \Omega$, we could let $\nu(y)$ and $T(y)$ denote
respectively the unit outward normal to $\partial \Omega$ at $y$
and the tangent hyperplane to $\partial \Omega$ at $y$. By a
rotation of coordinates we can assume that the $x_{n+1}$
coordinate axis lies in the direction $-\nu(y_0)$. In some
neighborhood $\mathcal N$ of $y_0$, $\partial \Omega$ is then
given by $x_{n+1}=\varphi(x')$ where $x'=(x_1,\cdots,x_n)$,
$\varphi\in C^2(T(y_0)\bigcap\mathcal N)$ and $D\varphi(y'_0)=0$.
The eigenvalues of $[D^2\varphi(y'_0)]$,
$\kappa_1,\cdots,\kappa_n$ are call the principal curvatures of
$\partial \Omega$ at $y_0$. By a further rotation of coordinates
the Hessian matrix can be diagonalized to be
  \begin{equation}
    [D^2\varphi(y'_0)]=\mbox{diag}[\kappa_1,\cdots,\kappa_n].
    \label{}
  \end{equation}
  We call the coordinates after the rotation the {\em principal coordinate system} at $y_0$. The unit outward normal vector $\bar{\mathbf \nu}(y')=\mathbf \nu(y)$ at the point $y=(y',\varphi(y'))\in \mathcal N\bigcap \partial \Omega$ is given by
  \begin{equation}
    \nu_i(y)=\frac{D_i\varphi(y')}{\sqrt{1+|D\varphi(y')|^2}}, i=1,\cdots,n, \nu_{n+1}(y)= \frac{-1}{\sqrt{1+|D\varphi(y')|^2}}.
    \label{}
  \end{equation}
 Therefore, under the principal coordinates at $y_0$, we have
 \begin{equation}
   D_j\bar{\mathbf v}_i(y'_0)=\kappa_i\delta_{ij}, i,j=1,\cdots, n.
   \label{}
 \end{equation}

For each point $x\in G $, there exists a unique point $y=y(x)\in \partial \Omega$ such
that $|x-y|=\delta(x)$. The points $x$ and $y$ are related by
  \begin{equation}
    x=y-\delta\nu(y).
    \label{C2 : equ1}
  \end{equation}
We show that this equation determines $y$ and $\delta$ as $C^1$ functions of $x$.

For a fixed point $x_0\in G$, let $y_0=y(x_0)$ and choose a principal coordinate system
at $y_0$. Let $\mathbf g=(g^1,\cdots,g^n)$ be a mapping from $\mathcal U=(T(y_0)\bigcap \mathcal N(y_0))\times \mathbb R$
into $\mathbb R^{n+1}$ by

  \begin{equation}
    \mathbf g(y',\delta)=y-\nu(y)\delta, y=(y',\varphi(y')).
    \label{}
  \end{equation}

Clearly, $\mathbf g\in C^1(\mathcal U)$, and the Jacobian matrix of $\mathbf g$ at $(y'_0, \delta(x))$ is given by
\begin{equation}
  [D\mathbf g]=\mbox{diag}[1-\kappa_1\delta,\cdots, 1-\kappa_n\delta, 1].
  \label{}
\end{equation}

Since the Jacobian of $\mathbf g$ at $(y'_0, \delta(x_0))$ is given by
\begin{equation}
  \det[D\mathbf g]=(1-\kappa_1\delta(x_0))\cdots(1-\kappa_n\delta(x_0))>0,
  \label{}
\end{equation}
because $x\in G$, it follows from the inverse mapping theorem that for some neighborhood
$\mathcal M=\mathcal M(x_0)$ of $x_0$, the mapping $y'$ is contained in $C^1(\mathcal M)$. From (\ref{C2 : equ1}) we have
$D\delta(x)=- \nu(y(x))=-\nu(y'(x))\in C^1(\mathcal M)$ for $x\in \mathcal M$. Hence
$\delta\in C^2(G)$.

\end{proof}

\begin{lemm}
  \label{hessian}
   Let $\Omega\subset \mathbb R^{n+1}$.
 Suppose $ x \in G$ and let
   $ y =N( x )$ be the nearest point on the boundary. Let $\kappa_i( y )$, $i=1,\cdots,n$ be the principal
   curvatures of the boundary at $ y $, then in terms of a principal coordinate system at $ y $, for $\forall x\in G$, we have
   \begin{equation}
     [D^2\delta( x )]={\rm diag}\Big[\frac{-\kappa_1}{1-\delta\kappa_1}, \cdots, \frac{-\kappa_n}{1-\delta\kappa}, 0  \Big],
     \label{Hessian : equ}
   \end{equation}
  where $[D^2\delta( x )]$ is the Hessian matrix of the distance function and right hand side is a diagonal matrix.
\end{lemm}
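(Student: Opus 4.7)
The plan is to differentiate the identity $\nabla\delta(x)=-\nu(y(x))$ already established in the proof of Lemma \ref{C2}, where $y(x)=N(x)$ is the unique nearest point. Since $\nu$ depends only on $y'$ via $\bar{\mathbf{\nu}}(y')=\mathbf{\nu}((y',\varphi(y')))$, the chain rule gives, for each $j=1,\dots,n+1$,
\[
\partial_{x_j}\partial_{x_i}\delta(x)\;=\;-\sum_{l=1}^{n}\bigl(D_{y'_l}\bar{\mathbf{\nu}}_i\bigr)\bigl(y'(x)\bigr)\,\partial_{x_j}y'_l(x),
\]
so everything reduces to computing (a) the normal map's derivative $D\bar{\mathbf{\nu}}$ at $y_0$, and (b) the Jacobian of the inverse of $\mathbf{g}$.

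First I would fix the principal coordinate system at $y_0=N(x_0)$ as in the proof of Lemma \ref{C2}, so that $y_0'=0$, $\varphi(0)=0$, $D\varphi(0)=0$, and $[D^2\varphi(0)]=\text{diag}[\kappa_1,\dots,\kappa_n]$. From the formulas
\[
\bar{\mathbf{\nu}}_i(y')=\frac{D_i\varphi(y')}{\sqrt{1+|D\varphi|^2}}\ (i\le n),\qquad \bar{\mathbf{\nu}}_{n+1}(y')=\frac{-1}{\sqrt{1+|D\varphi|^2}},
\]
I read off $D_{y'_l}\bar{\mathbf{\nu}}_i(0)=\kappa_i\delta_{il}$ for $i,l\le n$ (this is already noted in Lemma \ref{C2}), and—using $D\varphi(0)=0$—also $D_{y'_l}\bar{\mathbf{\nu}}_{n+1}(0)=0$.

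Next I would invert the mapping $\mathbf{g}(y',\delta)=(y',\varphi(y'))-\delta\,\bar{\mathbf{\nu}}(y')$. Its Jacobian at $(0,\delta(x_0))$ was computed in Lemma \ref{C2} to be $\text{diag}[1-\kappa_1\delta,\dots,1-\kappa_n\delta,1]$, which is invertible by Lemma \ref{positiveness}. Because $D\mathbf{g}$ is diagonal, so is its inverse, and the inverse function theorem yields
\[
\partial_{x_k}y'_j(x_0)=\frac{\delta_{jk}}{1-\kappa_j\delta(x_0)}\ \ (j,k\le n),\qquad \partial_{x_{n+1}}y'_j(x_0)=0,\qquad \partial_{x_k}\delta(x_0)=\delta_{k,n+1}.
\]

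Plugging these into the chain-rule formula yields, for $i,k\le n$,
\[
\partial_{x_k}\partial_{x_i}\delta(x_0)=-\sum_{l\le n}\kappa_i\delta_{il}\cdot\frac{\delta_{lk}}{1-\kappa_l\delta}=-\frac{\kappa_i}{1-\kappa_i\delta(x_0)}\,\delta_{ik},
\]
while the entries in the $(n+1)$-st row or column vanish: for $i\le n$ we get $\partial_{x_{n+1}}\partial_{x_i}\delta=0$ because $\partial_{x_{n+1}}y'_l=0$, and for $i=n+1$ we get $\partial_{x_k}\partial_{x_{n+1}}\delta=0$ because $D_{y'_l}\bar{\mathbf{\nu}}_{n+1}(0)=0$. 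This establishes the claimed diagonal form (\ref{Hessian : equ}). The only delicate point is tracking the $(n+1)$-st direction: one must verify that both the tangential derivative of the normal's vertical component and the inverse Jacobian's $(n+1)$-st column behave trivially at $y_0'=0$, which is precisely where the choice of principal coordinates—making $D\varphi(0)=0$—is essential.
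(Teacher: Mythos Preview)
Your argument is correct and is essentially the same approach the paper takes: the paper simply cites Lemma~14.17 of \cite{GT}, whose proof is precisely the chain-rule computation you have written out---differentiating $\nabla\delta=-\nu(y(x))$ and inverting the diagonal Jacobian of $\mathbf g$ at $(y_0',\delta(x_0))$. You have spelled out the details (including the vanishing of the $(n{+}1)$-st row and column) that the paper leaves implicit in the reference.
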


\begin{proof}
  Geometrically, the result follows from the fact that circles of principal curvature to $\partial \Omega$ at
  $ y _0$ and to the level surface at $ x _0$ are concentric. Since it is already proved that
  $\delta\in C^2(G)$ from Lemma \ref{C2}, using the
  definition of principal curvatures and finding Jacobi matrix under change of variables, the proof of Lemma 14.17 in \cite{GT},
  can be used without any change.
\end{proof}
An expression for the Laplacian of a $C^2(\mathbf{R}^+)$ function of $\delta( x )$ can be found in \cite{BEL}.

Now we recall some important elementary facts used in the study of fully non-linear geometric PDEs. Let $\lambda=(\lambda_1,\cdots,\lambda_n)\in \mathbb R^n$. Recall the $k$-th elementary symmetric functions of the vector $\lambda$ is defined as follows:
\begin{equation}
  \begin{array}[]{rll}
    \sigma_k(\lambda)=\sum_{1\le {i_1}<\cdots<{i_k}\le n}\lambda_{i_1}\cdots\lambda_{i_k}.
  \end{array}
  \label{sigma_k}
\end{equation}
In particular, $\sigma_1(\lambda)=\sum^n_{i=1}\lambda_i$ and $\sigma_n(\lambda) = \lambda_1\cdots\lambda_n$.

Below is a version of the well-known Newton-MacLaurin inequality for elementary symmetric functions which is the most important algebraic inequality in studying fully non-linear PDEs.
\begin{lemm}{\bf (Newton's Inequality \cite{N})}
  \label{lemm : key}
  Let $\lambda=(\lambda_1,\cdots,\lambda_n)$ with $\lambda_i>0$ for all $i=1,\cdots,n$ and $\sigma_k(\lambda)$ defined as in (\ref{sigma_k}). Then
  \begin{equation}
    \frac{\sigma_{n-1}(\lambda)}{\sigma_n(\lambda)}\ge\cdots\ge c(n,k)\frac{\sigma_{k-1}(\lambda)}{\sigma_k(\lambda)}\ge\cdots\ge n^2\frac{1}{\sigma_1(\lambda)},
    \label{lemm : key : equ}
  \end{equation}
  where $c(n,k)=\frac{n(n-k+1)}{k}$. The equalities hold if and only if $\lambda_1=\cdots=\lambda_n$.
\end{lemm}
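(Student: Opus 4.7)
The plan is to recognize the chain as the standard Newton--Maclaurin inequality in its ratio form and to reduce it to the log-concavity statement $p_{k-1}p_{k+1}\le p_k^2$, where $p_k:=\sigma_k(\lambda)/\binom{n}{k}$ denotes the normalized elementary symmetric function. First I would rewrite the inequality between the $k$-th and $(k+1)$-st entries in the chain as
\[
(n-k+1)(k+1)\,\sigma_{k-1}\sigma_{k+1}\le k(n-k)\,\sigma_k^2,\qquad 1\le k\le n-1,
\]
which, after dividing by the appropriate binomials and using the identity $\binom{n}{k-1}\binom{n}{k+1}/\binom{n}{k}^2=k(n-k)/((k+1)(n-k+1))$, becomes exactly $p_{k-1}p_{k+1}\le p_k^2$. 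Hence the whole chain, including its endpoints $\sigma_{n-1}/\sigma_n$ and $n^2/\sigma_1$, will follow at once; the endpoint values are easily checked since $c(n,n)=1$ and $c(n,1)=n^2$.

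Next I would prove the log-concavity via the classical real-rooted polynomial argument. Consider
\[
f(t):=\prod_{i=1}^n(1+\lambda_i t)=\sum_{k=0}^n\sigma_k(\lambda)\,t^k.
\]
Since each $\lambda_i>0$, $f$ has $n$ negative real roots (counted with multiplicity). Rolle's theorem, applied iteratively and with due care for repeated roots, shows that every derivative $f^{(j)}$ also has only real roots. Differentiating $f$ exactly $k-1$ times and then passing to the reciprocal polynomial reduces the desired inequality to a real-rooted quadratic $At^2+Bt+C$ with positive coefficients, for which the discriminant condition $B^2\ge 4AC$ is precisely the renormalized form of $p_{k-1}p_{k+1}\le p_k^2$. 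Tracking the binomial coefficients through the $k-1$ differentiations produces exactly the factor $k(n-k)/((k+1)(n-k+1))$ that matches the Newton inequality.

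For the equality case, equality in the discriminant step forces a double root at the quadratic stage; tracing back through Rolle's theorem, this forces all $\lambda_i$ to coincide. Conversely, if $\lambda_1=\cdots=\lambda_n=\lambda$, then $\sigma_k=\binom{n}{k}\lambda^k$ and a direct substitution gives $c(n,k)\sigma_{k-1}/\sigma_k=n/\lambda$ for every $k$, so equality holds throughout. The main obstacle is the careful bookkeeping of the binomial coefficients through the iterated differentiations, together with the accounting of multiplicities when Rolle's theorem is applied to a polynomial with coincident roots; both are routine but must be carried out precisely to produce the exact constants $c(n,k)$.
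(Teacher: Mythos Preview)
The paper does not actually prove this lemma: it is stated as the well-known Newton--Maclaurin inequality and attributed to Newton (reference [N]), with no proof given. Your sketch is the standard classical argument (reduce the chain to the log-concavity $p_{k-1}p_{k+1}\le p_k^2$ of the normalized symmetric functions, then prove the latter via the real-rootedness of $\prod_i(1+\lambda_i t)$ and Rolle's theorem), and the bookkeeping you outline---the binomial identity $\binom{n}{k-1}\binom{n}{k+1}/\binom{n}{k}^2=k(n-k)/((k+1)(n-k+1))$ and the endpoint checks $c(n,n)=1$, $c(n,1)=n^2$---is correct. So your proposal is fine; it simply supplies a proof where the paper chose to cite one.
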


Now we apply Lemma \ref{lemm : key} to prove the following proposition.
\begin{prop}
  \label{prop : mean convex}
  Let $\kappa=(\kappa_1, \cdots, \kappa_n)\in \mathbb R^n$ be the principal curvatures and $H$ the mean curvature
  of the boundary at a point on $\partial \Omega\in C^2$.  Then
  \begin{equation}
    \sum_i^n\frac{\kappa_i}{1-\delta\kappa_i}\ge \frac{nH}{n-\delta H},
    \label{prop : mean convex : equ}
  \end{equation}
whenever $1-\delta \kappa_i>0$ is satisfied for all $i=1, \cdots, n$. Equality holds if and only if $\kappa_1=\cdots=\kappa_n$.
\end{prop}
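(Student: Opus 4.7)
The plan is to reduce \eqref{prop : mean convex : equ} to the arithmetic--harmonic mean inequality (equivalently, the extremal case of Newton's inequality in Lemma~\ref{lemm : key}) via a single convenient substitution.

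If $\delta = 0$, both sides of \eqref{prop : mean convex : equ} reduce to $H$, so the inequality holds with equality. Assuming $\delta > 0$, I would introduce the auxiliary variables
$$a_i := 1 - \delta\kappa_i, \qquad i = 1,\ldots,n,$$
which are strictly positive by hypothesis. A short computation then gives
$$\frac{\kappa_i}{1-\delta\kappa_i} = \frac{1}{\delta}\left(\frac{1}{a_i} - 1\right), \qquad H = \frac{n - \sum_i a_i}{\delta}, \qquad n - \delta H = \sum_i a_i,$$
so that, after canceling the common additive term $-n/\delta$ on the two sides and clearing the positive factor $\delta$, the whole proposition reduces to the single inequality
$$\left(\sum_{i=1}^n a_i\right)\left(\sum_{i=1}^n \frac{1}{a_i}\right) \ge n^2.$$

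This is Cauchy--Schwarz (equivalently AM--HM); it is also exactly what one obtains by comparing the two outermost terms $\sigma_{n-1}(a)/\sigma_n(a) \ge n^2/\sigma_1(a)$ in Lemma~\ref{lemm : key} applied to the positive vector $a = (a_1,\ldots,a_n)$. Equality occurs iff $a_1 = \cdots = a_n$, equivalently iff $\kappa_1 = \cdots = \kappa_n$, which yields the stated equality case.

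There is no serious obstacle. The only point requiring any care is that the hypothesis $1-\delta\kappa_i>0$ for each $i$ is precisely what is needed both to make the substitution $a_i=1-\delta\kappa_i$ legitimate (positive $a_i$) and to justify invoking Newton's inequality, which requires strictly positive arguments.
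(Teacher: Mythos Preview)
Your proof is correct and follows essentially the same route as the paper: the substitution $a_i=1-\delta\kappa_i$ (the paper writes $\lambda_i$) reduces the inequality to $\sum_i 1/a_i \ge n^2/\sum_i a_i$, which the paper derives from the extremal case $\sigma_{n-1}/\sigma_n \ge n^2/\sigma_1$ of Lemma~\ref{lemm : key}, exactly as you note. Your additional remark that this is just AM--HM (Cauchy--Schwarz) is a welcome simplification but not a different argument.
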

\begin{proof} Note that $\sigma_1(\kappa)=H$. Let $\lambda_i=1-\delta\kappa_i$, then $\sigma_1(\lambda)=n-\delta H$.
  We may assume that $\delta>0$, otherwise the result holds trivially. Applying (\ref{lemm : key : equ}), we have
  \begin{equation}
    \begin{array}{rll}
     \di \sum_i^n\frac{\delta\kappa_i}{1-\delta\kappa_i}=&\displaystyle\sum_{i=1}^n\frac{1-\lambda_i}{\lambda_i}=\displaystyle\sum_{i=1}^n\frac{1}{\lambda_i}-n\\
   \label{}
  \end{array}
  \end{equation}
  It is not hard to see that $\di\frac{\sigma_{n-1}(\lambda)}{\sigma_n(\lambda)}=\di\sum_{i=1}^n\frac{1}{\lambda_i}$. Hence, from (\ref{lemm : key : equ})
  \begin{equation}
    \begin{array}{rll}
     \di \sum_i^n\frac{\delta\kappa_i}{1-\delta\kappa_i} =&\di\frac{\sigma_{n-1}(\lambda)}{\sigma_n(\lambda)}-n\\
      \ge&\di\frac{n^2}{\sigma_1(\lambda)}-n\\
      =&\di\frac{n\delta H}{n-\delta H}.
    \label{}
  \end{array}
  \end{equation}
  By Lemma \ref{lemm : key}, equality holds if and only if all the $\lambda_i$s are the same. Equivalently, all the principal curvatures at the point must be equal.

\end{proof}

Combining Lemma \ref{hessian} and Proposition \ref{prop : mean convex}, and also applying Lemma \ref{positiveness}, one easily sees that (\ref{key theorem : equation}) holds on the good set $G$. 

\begin{coro}
 Let  $\Omega\subset \mathbb R^{n+1}$. Then for any $x\in G$,
\begin{equation}
  -\Delta \delta( x )\ge \frac{nH( y )}{n-\delta H( y )},
  \label{comparison good set}
\end{equation}
where $\delta( x ):=\inf_{ y \in \mathbb{R}^{n+1}\setminus \Omega} dist(x,y)$ and $H( y )$ is the mean curvature at the nearest point $y=N( x )\in \partial \Omega$ of $ x $.
\end{coro}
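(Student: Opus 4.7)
The plan is to simply assemble the three ingredients that were already established earlier in the section. First, I would take the trace of the Hessian expression from Lemma \ref{hessian}. In a principal coordinate system at $y = N(x)$,
\[
-\Delta \delta(x) \;=\; -\operatorname{tr}\bigl[D^2\delta(x)\bigr] \;=\; \sum_{i=1}^{n}\frac{\kappa_i(y)}{1-\delta(x)\kappa_i(y)},
\]
since the last diagonal entry of the Hessian is $0$ and the other entries are $-\kappa_i/(1-\delta\kappa_i)$. Note that the Laplacian is coordinate-invariant, so the trace in the principal frame at $y$ gives the correct intrinsic value.

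Second, I would invoke Lemma \ref{positiveness} to guarantee $1-\delta(x)\kappa_i(y)>0$ for every $i$ and every $x\in G$. This positivity is exactly the hypothesis needed to apply Proposition \ref{prop : mean convex} to the vector $\kappa=(\kappa_1,\dots,\kappa_n)$, which then yields
\[
\sum_{i=1}^n\frac{\kappa_i}{1-\delta\kappa_i}\;\ge\;\frac{nH}{n-\delta H}.
\]
Combining this with the identity from the previous step gives (\ref{comparison good set}).

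There is essentially no obstacle: all heavy lifting is done by Lemmas \ref{positiveness}, \ref{C2}, \ref{hessian}, and Proposition \ref{prop : mean convex}. The only small point worth mentioning is that Proposition \ref{prop : mean convex} was proved via Newton's inequality under the assumption $\lambda_i=1-\delta\kappa_i>0$; since Lemma \ref{positiveness} provides strict positivity on the good set $G$ (not merely the nonstrict bound), the application is legitimate, and the inequality extends pointwise to all $x\in G$ without any additional regularity or sign assumption on $H(y)$.
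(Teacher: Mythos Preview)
Your proof is correct and follows exactly the approach indicated in the paper: combine Lemma \ref{hessian} (to express $-\Delta\delta$ as $\sum_i \kappa_i/(1-\delta\kappa_i)$), Lemma \ref{positiveness} (to ensure $1-\delta\kappa_i>0$ on $G$), and Proposition \ref{prop : mean convex} (to bound the sum below by $nH/(n-\delta H)$). The paper states this combination in a single sentence without further detail, so your write-up is simply a more explicit version of the same argument.
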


\section{Superharmonicity in the distribution sense}\label{distribution section}

\subsection{Proof of Theorem \ref{key theorem} when $\partial \Omega\in
C^{2,1}$}
{\it In this subsection,
we assume that $\partial \Omega$ is
$C^{2,1}$.}

Since
the test function $\varphi$ in
 (\ref{distribution sense})
has support in $B(0, R)$ for some $R>0$, we can
replace  $\Omega$  by
 a bounded
$\Omega_R$, still with $C^{2, 1}$ boundary, and 
$\Omega\cap B(0, 3R)=\Omega_R\cap B(0,3R)$.
It is clear that the distance function $\delta_R$, for $\Omega_R$,
coincides with the distance function $\delta$ on the support of $\varphi$.
Therefore we can assume that $\Omega$ is bounded in deriving
 (\ref{distribution sense})
for $\varphi$.

For $z\in \partial \Omega$, let
$$
\bar \rho(z): = \sup\{ t: z+t\eta(z)\in G\},
$$
where $\eta=-\nu$ is the inward unit normal. From every point $z$ on $\partial \Omega$, move along the inner normal until first hitting a point on the singular set $S$. We will denote this point to be $m(z)$ following the notations in \cite{LN}. It is known that
$$
m(z): =  z+\bar \rho(z)\eta(z).
$$
 The following non-trivial result was independently established, with different proofs, by Itoh-Tanaka \cite{IT} and Li-Nirenberg \cite{LN}. 

\begin{theo}
  \cite{IT,LN}  The map $m(z)$ and the function 
 $\bar\rho(z)$ are in  $C^{0,1}_{loc}(\partial \Omega)$.
  \label{LN theo}
\end{theo}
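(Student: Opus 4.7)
The plan is to follow the Li--Nirenberg approach: characterize $\bar\rho(z)$ as the minimum of two geometrically distinct lengths along the inner normal, and establish local Lipschitz regularity of each separately.

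First I would decompose the cut distance. In view of Lemma \ref{positiveness} and Lemma \ref{hessian}, the normal segment $z + t\eta(z)$ lies in $G$ up to exactly the first of two obstructions: either (i) a focal point is reached, where $1 - t\kappa_i(z) = 0$ for some principal curvature $\kappa_i(z) > 0$, or (ii) the segment hits a point that is equidistant to a second boundary point. Accordingly, set
$$\tau(z) := \sup\{t \ge 0 : 1 - t\kappa_i(z) > 0 \text{ for all } i = 1,\dots,n\},$$
which equals $1/\max_i \kappa_i(z)$ if some $\kappa_i(z) > 0$ and $+\infty$ otherwise, and
$$\beta(z) := \inf\{t > 0 : \exists\, w \in \partial\Omega \setminus \{z\} \text{ with } |z + t\eta(z) - w| = t\}.$$
A direct check, using that $G$ is precisely the locus of points with a unique nearest boundary point together with the strict inequality in Lemma \ref{positiveness}, yields $\bar\rho(z) = \min(\tau(z),\beta(z))$.

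Second, I would prove each of $\tau$ and $\beta$ is locally Lipschitz on $\partial\Omega$. For $\tau$: under $\partial\Omega \in C^{2,1}$, the entries of the second fundamental form in a local graph parametrization are locally Lipschitz, so by standard eigenvalue perturbation the principal curvatures $\kappa_i$ are locally Lipschitz; a finite maximum of Lipschitz functions is Lipschitz, and inversion away from zero preserves the estimate, so $\tau$ is locally Lipschitz on any open subset where it is finite (where $\tau$ is locally $+\infty$ it drops out of the minimum). For $\beta$: argue by contradiction. If $\beta$ failed the Lipschitz bound near some $z_0$, extract sequences $z_k, z'_k \to z_0$ with $|\beta(z_k) - \beta(z'_k)|/|z_k - z'_k| \to \infty$ and companion second-nearest points $w_k, w'_k \in \partial\Omega$; by compactness we may pass to $w_k \to w_0$, $w'_k \to w'_0$ with $w_0, w'_0 \ne z_0$. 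Expanding the defining relation $|z + \beta(z)\eta(z) - w|^2 = \beta(z)^2$ to first order in $z$ and in $w$, and using the $C^{2,1}$ regularity of $\eta$, produces a linear system for $\beta(z_k) - \beta(z'_k)$ in terms of $z_k - z'_k$ and $w_k - w'_k$; its coefficient determinant is bounded below by the sine of the angle between the two radii from $m(z_0)$, which is strictly positive by the distinct-contact transversality. Solving gives a uniform Lipschitz bound on $\beta$, contradicting the assumed blow-up.

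Finally, $\bar\rho = \min(\tau,\beta)$ is locally Lipschitz as a minimum of two locally Lipschitz functions, and since $\eta$ is locally Lipschitz on $\partial\Omega$ under $\partial\Omega \in C^{2,1}$ (indeed $C^{1,1}$ suffices), the map $m(z) = z + \bar\rho(z)\eta(z)$ is locally Lipschitz as well. The main obstacle is the Lipschitz estimate for $\beta$: the ``second nearest point'' $w(z)$ can jump between nearby $z$'s, so the bound must be obtained uniformly through the transversality of the two contact radii at $m(z_0)$, and it is precisely here that the full $C^{2,1}$ hypothesis on $\partial\Omega$ (rather than just $C^2$) is needed.
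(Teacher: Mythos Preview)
First, note that the paper does not supply a proof of this theorem: it is quoted from \cite{IT} and \cite{LN} as a black box (the paper explicitly calls it a ``non-trivial result'' established in those references), so there is no in-paper argument to compare your sketch against. I can only comment on the sketch on its own merits.

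The decomposition $\bar\rho=\min(\tau,\beta)$ into the focal distance and the first-equidistant time is correct and is the standard framework; the treatment of $\tau$ under the $C^{2,1}$ hypothesis is fine. The genuine gap is in your handling of $\beta$. After passing to subsequences you simply assert that the limit second-contact points satisfy $w_0,w'_0\ne z_0$, but this is exactly what can fail at a point $z_0$ where $\beta(z_0)=\tau(z_0)$: there the optimal second-contact point $w$ for nearby $z$ may collapse onto $z$, the angle between the two radii from $m(z_0)$ tends to zero, and your transversality determinant degenerates. Concretely, from the defining relation $|z+F\eta(z)-w|^2=F^2$ one computes that, for $dz$ tangent to $\partial\Omega$, the derivative of the equidistant time $F(z,w)$ with $w$ held fixed obeys $|\nabla_z F(z,w)|\sim |z-w|^{-1}$ as $w\to z$; a first-order expansion therefore yields no uniform bound in this regime, and your linear-system argument breaks down. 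The proofs in \cite{IT,LN} confront the merging case by a separate second-order comparison of $\partial\Omega$ near $z$ that gives a quantitative lower bound of the type $F(z,w)\ge \tau(z)-C|w-z|$, which is then played off against the Lipschitz control of $\tau$; it is \emph{this} focal-side estimate, rather than the Lipschitz regularity of $\eta$ alone, that genuinely consumes the $C^{2,1}$ hypothesis. Your closing paragraph correctly identifies where the difficulty lies but does not provide the mechanism that resolves it.
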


As a corollary of the above theorem, one obtains
\begin{coro}
  \cite{LN} Let $\Omega\subset \mathbb R^{n+1}$
and $S\subset\Omega$ be the singular set defined in (\ref{singular set}). The Hausdorff measure of the singular set $H^{n}(S)<\infty$. 
\end{coro}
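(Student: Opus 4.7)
The plan is to realize the singular set $S$ as a subset of the image $m(\partial\Omega)$ of the boundary under the locally Lipschitz map from Theorem \ref{LN theo}, and then invoke the fact that locally Lipschitz maps do not increase $n$-dimensional Hausdorff measure.

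First I would establish the inclusion $S\subset m(\partial\Omega)$. Given $x\in S$, pick any nearest boundary point $y\in \partial\Omega$ of $x$ and set $\eta(y):=(x-y)/\delta(x)$; since $y$ is a boundary minimizer and $\partial\Omega\in C^{2}$, this coincides with the inward unit normal at $y$. For $0<t<\delta(x)$ put $p_t:=y+t\eta(y)$. For any $z\in\partial\Omega$, two applications of the triangle inequality (together with $|x-z|\geq\delta(x)$) give
\[
|p_t-z|\;\geq\;|x-z|-|x-p_t|\;\geq\;\delta(x)-(\delta(x)-t)\;=\;t\;=\;|p_t-y|,
\]
so $y$ realizes $\delta(p_t)$. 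Tracing the equality cases, any competing minimizer $z$ would have to be collinear with $y$ and $x$ past $x$ and equidistant from $x$ as $y$; this forces $z=y+2\delta(x)\eta(y)$, for which $|p_t-z|=2\delta(x)-t>t$, a contradiction. Hence $p_t\in G$ for every $0<t<\delta(x)$, so $\bar\rho(y)\geq\delta(x)$, and since $x\in S$ we conclude $\bar\rho(y)=\delta(x)$ and $m(y)=x$.

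Next I would pass to the measure estimate. Restricting to a bounded piece of $\Omega$ (so that $\partial\Omega$ is compact and $H^{n}(\partial\Omega)<\infty$), the local Lipschitz property of $m$ from Theorem \ref{LN theo} yields a uniform Lipschitz constant $L<\infty$. A standard property of Hausdorff measure then gives
\[
H^{n}(S)\;\leq\;H^{n}\bigl(m(\partial\Omega)\bigr)\;\leq\;L^{n}\,H^{n}(\partial\Omega)\;<\;\infty.
\]

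The main obstacle is Step 1: the geometric fact that the open segment joining any nearest boundary point to a singular point lies entirely in $G$, which is what pins $x$ down to be the first cut point $m(y)$. Everything else is the Lipschitz-image bound on Hausdorff measure applied to the compact $n$-manifold $\partial\Omega$.
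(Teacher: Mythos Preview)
Your overall strategy---show $S\subset m(\partial\Omega)$ and then invoke the Lipschitz image bound on Hausdorff measure---is precisely the argument behind the result of \cite{LN} that the paper is quoting, so the plan is correct. There is, however, a genuine gap in Step~1. From ``$p_t$ has $y$ as its unique nearest point'' you pass directly to ``$p_t\in G$'', but by Definition~\ref{singular set} the good set $G$ is the \emph{largest open} subset of $\Omega$ on which the nearest point is unique, not the set of all such points. These can differ: for the region above the parabola $x_2=x_1^2$ in $\mathbb{R}^2$, the focal point $(0,\tfrac12)$ has the origin as its unique nearest boundary point, yet it lies in $S$, being the limit of the axis points $(0,t)$ with $t>\tfrac12$, each of which has two nearest points.

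What rescues your argument is that $p_t$ is never a focal point. Since the open ball $B(x,\delta(x))\subset\Omega$ is internally tangent to $\partial\Omega$ at $y$, the principal curvatures satisfy $\kappa_i(y)\le 1/\delta(x)$, and therefore $1-t\,\kappa_i(y)>0$ for every $0<t<\delta(x)$. With this nondegeneracy in hand, the inverse--function--theorem computation in the proof of Lemma~\ref{C2} shows that the normal map $(z,s)\mapsto z-s\nu(z)$ is a local $C^1$ diffeomorphism near $(y,t)$; combined with the uniqueness you have already established (so that any nearest point of a nearby $q$ must lie close to $y$), this forces every point in a neighborhood of $p_t$ to have a unique nearest boundary point. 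Hence $p_t$ lies in the interior of the uniqueness set, i.e.\ $p_t\in G$, and the rest of your argument goes through.
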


 Recall the following fact. For $x\in G= \Omega\setminus S$, if we let $N(x)$ be the unique point on $\partial \Omega$, such that,
\[ \delta(x)= | x-N(x)|,\]
i.e., $N(x)$ is the nearest point
on $\partial \Omega$, then $\delta(x)\in C^2(\Omega\setminus S)$.

Next, we introduce the following normalized distance function $h(x)$ 
in $G$ which will be important later, 
\begin{equation}
  h(x):=\di\frac{\delta(x)}{\Lambda(x)}
  \label{normalized distance}
\end{equation}
where $\Lambda(x)=\bar\rho(N(x))$, $N(x)$ is the nearest point of $x$ on $\partial \Omega$ and $\bar\rho(z)$ is the Lipschitz function in Theorem \ref{LN theo}. Note $\Lambda(x)$, and therefore, $h(x)$, originally defined in $G$ can be extended as a continuous function in $\Omega=G\cup S$,
by defining the value of $h$ on $S$ to be $1$. Therefore $\Lambda$ and $h$, belong to $C^{0,1}_{loc}(\overline \Omega \setminus S)\cap C^0(\overline \Omega)$.

Indeed, we have
\begin{lemm}
  \label{h continuity}
For $\forall \bar x\in S$, 
\begin{equation}
  \di\lim_{x\rightarrow \bar x, x\in G}h(x)=1.
  \label{}
\end{equation}
\end{lemm}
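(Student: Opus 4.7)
The plan is to take an arbitrary sequence $x_k\to\bar x$ with $x_k\in G$ and pinch $h(x_k)$ between two quantities that both tend to $1$. Set $y_k:=N(x_k)\in\partial\Omega$, $\delta_k:=\delta(x_k)$, and $\rho_k:=\bar\rho(y_k)=\Lambda(x_k)$, so
\[
x_k=y_k+\delta_k\,\eta(y_k),\qquad h(x_k)=\delta_k/\rho_k.
\]
Since $x_k\in G$ lies on the inward normal ray emanating from $y_k$, the definition of $\bar\rho$ as a supremum forces $\delta_k\le\rho_k$, hence $h(x_k)\le 1$. Because $|y_k-x_k|=\delta_k$ stays bounded and $x_k\to\bar x$, the sequence $y_k$ lies in a compact portion of $\partial\Omega$, so after passing to a subsequence $y_k\to y^*\in\partial\Omega$. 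Continuity of $\delta$ gives $\delta_k\to\delta(\bar x)$; Theorem \ref{LN theo} gives $\rho_k\to\bar\rho(y^*)$; and the $C^2$ regularity of $\partial\Omega$ makes $\eta$ continuous, so that $\bar x=y^*+\delta(\bar x)\,\eta(y^*)$ and $y^*\in N_{\partial\Omega}(\bar x)$. Passing to the limit in $\delta_k\le\rho_k$ yields the easy inequality $\delta(\bar x)\le\bar\rho(y^*)$.

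The heart of the argument is the reverse inequality $\delta(\bar x)\ge\bar\rho(y^*)$, which I prove by contradiction. If it failed, then by the supremum definition of $\bar\rho(y^*)$ one could fix $t_1$ with $\delta(\bar x)<t_1<\bar\rho(y^*)$ and $z_{t_1}:=y^*+t_1\eta(y^*)\in G$. A triangle-inequality computation then shows that for every $t\in(0,t_1)$ the point $z_t:=y^*+t\eta(y^*)$ has $y^*$ as its unique nearest point on $\partial\Omega$: any competitor $y'\ne y^*$ with $|z_t-y'|\le t$ would give
\[
|z_{t_1}-y'|\le|z_{t_1}-z_t|+|z_t-y'|\le(t_1-t)+t=t_1,
\]
and equality along this chain forces $z_t$ to lie on the segment $[z_{t_1},y']$, which combined with $|z_t-y'|=t$ and the geometry of the normal ray forces $y'=y^*$, a contradiction; hence the inequality is strict, contradicting $N(z_{t_1})=y^*$. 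The nondegeneracy $1-t\kappa_i(y^*)>0$ for $t\in(0,t_1)$ follows from Lemma \ref{positiveness} applied at $z_{t_1}\in G$ (since $1-t_1\kappa_i(y^*)>0$ forces $1-t\kappa_i(y^*)>0$ for $t<t_1$ in the cases $\kappa_i>0$, while $\kappa_i\le 0$ is automatic). The inverse-function-theorem argument in Lemma \ref{C2} now places every such $z_t$ in $G$, and choosing $t=\delta(\bar x)$ gives $\bar x=z_t\in G$, contradicting $\bar x\in S$.

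Combining the two inequalities yields $\delta(\bar x)=\bar\rho(y^*)>0$, whence $h(x_k)=\delta_k/\rho_k\to 1$ along the chosen subsequence. Since every subsequence of any approximating sequence admits a further subsequence with limit $1$, the full limit $\lim_{x\to\bar x,\,x\in G}h(x)=1$ follows. The main obstacle is the non-trivial direction $\delta(\bar x)\ge\bar\rho(y^*)$: one must translate the pointwise hypothesis $\bar x\in S$ into a statement about the entire inward-normal ray from $y^*$, and it is precisely here that one needs the equivalence between membership in $G$ and the combination of \emph{unique nearest point} with \emph{principal-curvature nondegeneracy}, which in turn relies on the $C^2$ regularity of $\partial\Omega$ via Lemmas \ref{positiveness} and \ref{C2}.
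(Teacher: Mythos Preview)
Your overall strategy coincides with the paper's: pass to a subsequence, identify a subsequential limit $y^*\in\partial\Omega$ of the near points, and reduce everything to showing $\bar\rho(y^*)=\delta(\bar x)$, the nontrivial half being $\bar\rho(y^*)\le\delta(\bar x)$, argued by contradiction (if strict, then $\bar x$ would lie in $G$). The paper obtains this last implication by a direct appeal to Corollary~4.11 of \cite{LN}, which states that $\{z+t\eta(z):0<t<\bar\rho(z)\}\subset G$. You instead attempt to re-derive this inclusion from Lemmas~\ref{positiveness} and~\ref{C2}, and that is where two genuine gaps appear.

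First, after fixing $t_1$ with $\delta(\bar x)<t_1<\bar\rho(y^*)$ and $z_{t_1}\in G$, your triangle-inequality argument uses $N(z_{t_1})=y^*$ to derive the contradiction. But $z_{t_1}\in G$ only tells you the nearest point is \emph{unique}; it does not by itself identify that point as $y^*$, i.e.\ it does not give $\delta(z_{t_1})=t_1$. This identification is exactly the content of $\tilde\rho\ge\bar\rho$ (Lemma~4.2 of \cite{LN}, invoked elsewhere in the paper), and you would need to cite or prove it.

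Second, even granting that every $z_t$, $0<t<t_1$, has $y^*$ as its unique nearest point with the nondegeneracy $1-t\kappa_i(y^*)>0$, your conclusion ``the inverse-function-theorem argument in Lemma~\ref{C2} now places every such $z_t$ in $G$'' is not what Lemma~\ref{C2} proves. That lemma \emph{assumes} $x_0\in G$ and deduces $\delta\in C^2$ nearby; it does not establish the converse. Since $G$ is by definition the \emph{interior} of the unique-nearest-point set, you must produce an open $(n{+}1)$-dimensional neighborhood of $z_t$ with the unique-nearest-point property, not merely the one-dimensional normal segment. This requires an additional compactness argument (ruling out far-away competitors for points \emph{near} the ray, using that $\partial\Omega\setminus U_\epsilon$ stays at distance $>t_1+\alpha$ from $z_{t_1}$), which you have not supplied. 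In effect you are sketching a proof of Corollary~4.11 of \cite{LN}; the sketch is in the right direction but incomplete as written.
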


\begin{proof}
  For $\bar x\in S$, $\exists \bar z\in \partial\Omega$, s.t., 
  \begin{equation}
    m(\bar z)=\bar z + \bar t \eta(\bar z)=\bar x,
    \label{}
  \end{equation}
where $\eta$ is the unit inner normal of $\partial \Omega$ at $\bar z$.
 We also have $|m(\bar z)-\bar z|=|\bar x-\bar z|=\bar t$.

 $\forall x_i\in G$, $x_i\rightarrow\bar x$, $\exists !
\ z_i:= N(x_i)\in \partial \Omega$, s.t. $ |x_i-z_i|=\delta(x_i)$.

By Corollary 4.11 of \cite{LN},
\begin{equation}
  \Lambda(x_i)>|x_i-z_i|=\delta(x_i),
  \label{}
\end{equation}
which implies 
\begin{equation}
  \di  \liminf_{i\rightarrow\infty}\Lambda(x_i)\ge\delta(\bar x).
  \label{}
\end{equation}

On the other hand, since
 $m(z_i)=z_i+t_i\eta(z_i)$, we have $\Lambda(x_i)=t_i$. We need the following claim :\\

  {\bf Claim:} 
  \begin{equation} 
  \limsup_{i\rightarrow \infty}\Lambda(x_i)\le \delta(\bar x).
  \label{}
\end{equation}
We now prove the claim by contradiction. If not, then $\exists \alpha>0$, s.t. $\Lambda(x_i)>\delta(\bar x)+\alpha$, for $\forall i$ large. Passing to a subsequence, we may assume that 
\begin{equation}
  \begin{array}[]{rll}
 z_i\rightarrow&\hat z\in\partial\Omega  \\
\Lambda(x_i)=t_i\rightarrow&\hat t\ge \delta(\bar x)+\alpha.
  \end{array}
  \label{}
\end{equation}
By the continuity of $m(z)$, c.f. \cite{LN}, 
\begin{equation}
  m(z_i)=z_i+t_i\eta(z_i)\rightarrow m(\hat z).
  \label{}
\end{equation}
We have 
\begin{equation}
  m(\hat z)=\hat z+\hat t\eta(\hat z).
  \label{6-1}
\end{equation}
But $x_i\rightarrow\bar x$, $x_i=z_i+\tilde t_i\eta(z_i)$, and $\tilde t_i<t_i$, $|x_i-z_i|=\tilde t_i$, $\tilde t_i\rightarrow \tilde t$. In the end we have
\begin{equation}
  \bar x = \hat z +\tilde t \eta(\hat z).
  \label{}
\end{equation}
Since
\begin{equation}
  \begin{array}[]{rll}
    \tilde t_i= & |x_i-z_i|\\
    =&dist(x_i,\partial \Omega)\\
    \le & dist(x_i,\bar x)+dist(\bar x,\partial\Omega)\\
    =&dist(x_i,\bar x)+\delta(\bar x)
  \end{array}
  \label{}
\end{equation}
where the term $dist(x_i,\bar x)\rightarrow 0$. This implies $\tilde t\le \delta(\bar x)\le \hat t -\alpha<\hat t$. By Corollary 4.11 of \cite{LN}, 
\begin{equation}
  \bar x=\hat z +\tilde x\eta(\hat z)\in G
  \label{}
\end{equation} 
in view of (\ref{6-1}), which yields a contradiction. Thus we have proved that 
\begin{equation}
  \di\lim_{i\rightarrow\infty}\Lambda(x_i)=\delta(\bar x),
  \label{}
\end{equation}
and
\begin{equation}
  \di\lim_{x\rightarrow \bar x}\Lambda(\bar x)=\delta(\bar x).
  \label{}
\end{equation}
The proof of the lemma is finished.

\end{proof}
$h(x)$ satisfies the following lemma.

\begin{lemm}
  The normalized distance function $h(x)\in C^{0,1}_{loc}(\overline \Omega \setminus S)\cap C^0(\overline \Omega)$, and
  \begin{equation}
     h(x)= \left\{
    \begin{array}[]{cll}
    &0, & x\in \partial \Omega,\\
    \\
    &1, & x\in S
    \end{array}
    \right.
    \label{}
  \end{equation}
  and $0<h(x)<1$ otherwise.
\end{lemm}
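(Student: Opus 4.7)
The plan is to decompose the lemma into three claims and treat each in turn: (i) the boundary values and range, (ii) continuity on all of $\overline\Omega$, and (iii) the local Lipschitz regularity on $\overline\Omega \setminus S$. The $C^0$ part and the range statements are quick consequences of the previous work in this section, while the local Lipschitz claim is where the substantive effort lies.

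For the boundary values and range, I would observe the following. If $x\in\partial\Omega$, then $\delta(x)=0$, $N(x)=x$, and $\Lambda(x)=\bar\rho(x)$. Since $\bar\rho$ is a well-defined positive function on $\partial\Omega$ (Theorem \ref{LN theo}), we get $h(x)=0$. On $S$, $h$ is set to $1$ by convention. For $x\in G$, $\delta(x)>0$ and by definition of $\bar\rho$ as a supremum along the inward normal from $N(x)$, one has $\delta(x)\le \Lambda(x)$. Equality would force $x=N(x)+\bar\rho(N(x))\eta(N(x))=m(N(x))\in S$, contradicting $x\in G$; hence $0<h(x)<1$ strictly on $G$.

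For continuity on $\overline\Omega$, on the open set $G$ the function $h=\delta/\Lambda$ is a ratio of continuous functions with $\Lambda>0$: $\delta\in C^2(G)$ by Lemma \ref{C2}, $N(x)=x-\delta(x)\nabla\delta(x)$ is continuous on $G$, and $\bar\rho$ is continuous on $\partial\Omega$ by Theorem \ref{LN theo}. At a point $z\in\partial\Omega$, continuity of $N$ on $\overline G$ (with $N(z)=z$) together with continuity of $\bar\rho$ gives $\Lambda(x)\to\bar\rho(z)>0$ while $\delta(x)\to 0$, so $h(x)\to 0=h(z)$. Finally, continuity at points of $S$ is precisely the content of Lemma \ref{h continuity}. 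Putting the three cases together yields $h\in C^0(\overline\Omega)$.

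For the local Lipschitz regularity on $\overline\Omega\setminus S$, fix a compact $K\subset\overline\Omega\setminus S$. I would argue that both $\delta$ and $\Lambda$ are Lipschitz on $K$, and $\Lambda$ is bounded below by a positive constant on $K$; the quotient $h=\delta/\Lambda$ then inherits the Lipschitz property. That $\delta$ is $1$-Lipschitz is standard. For $\Lambda=\bar\rho\circ N$, I would combine the Lipschitz regularity of $\bar\rho$ on $\partial\Omega$ (Theorem \ref{LN theo}) with Lipschitz regularity of the nearest-point map $N$ on $K$. On $K\cap G$, the formula $N(x)=x-\delta(x)\nabla\delta(x)$ together with Lemma \ref{C2} gives $N\in C^1$, and Lemma \ref{hessian} gives the explicit bound $|D^2\delta|\lesssim \sum_i \kappa_i/(1-\delta\kappa_i)$; this is uniformly controlled on $K$ because $K$ is uniformly separated from $S$, which is exactly the locus where $1-\delta\kappa_i\to 0$ can occur. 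On $K\cap\partial\Omega$, $N$ is the identity. The lower bound $\Lambda\ge c>0$ on $K$ follows from continuity of $\Lambda$ on $\overline\Omega$ and positivity of $\bar\rho$ on $\partial\Omega$.

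The main obstacle I anticipate is the last step, namely verifying that the Hessian bound from Lemma \ref{hessian} stays uniformly controlled on compact sets that touch $\partial\Omega$ but avoid $S$. The key input is that the focal locus (where $1-\delta\kappa_i$ degenerates) is contained in $\overline{S}$, so uniform separation of $K$ from $S$, combined with the $C^{2,1}$ regularity of $\partial\Omega$ (giving bounded $\kappa_i$), yields a uniform lower bound on $1-\delta\kappa_i$ along $K\cap G$. Once this is in hand, the chain-rule computation $\nabla h=\nabla\delta/\Lambda - h\,\nabla\Lambda/\Lambda$ gives a uniform bound on $|\nabla h|$ over $K\cap G$, which upgrades (by continuity up to $\partial\Omega$) to a Lipschitz estimate on all of $K$.
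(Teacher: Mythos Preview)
Your decomposition and argument are correct, and they match the paper's approach. The paper in fact gives no explicit proof of this lemma: it asserts the $C^{0,1}_{loc}(\overline\Omega\setminus S)\cap C^0(\overline\Omega)$ regularity in the text preceding Lemma~\ref{h continuity}, proves Lemma~\ref{h continuity} to handle continuity at points of $S$, and then states the present lemma as a summary. Your write-up simply fills in the details the paper leaves implicit---in particular the Lipschitz bound on $\Lambda=\bar\rho\circ N$ via the Hessian formula of Lemma~\ref{hessian} and the Lipschitz regularity of $\bar\rho$ from Theorem~\ref{LN theo}---and this is exactly the intended mechanism.

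One small remark: your identification of the obstacle is accurate, but the resolution is slightly simpler than you suggest. You do not need to invoke the focal locus separately; Lemma~\ref{positiveness} already gives $1-\delta\kappa_i>0$ everywhere on $G$, and since this quantity is continuous on $\overline G\setminus S$ (with value $1$ on $\partial\Omega$), it attains a positive minimum on any compact $K\subset\overline\Omega\setminus S$. That, together with the boundedness of the $\kappa_i$ coming from the $C^2$ (here $C^{2,1}$) regularity of $\partial\Omega$, is all that is needed for the uniform Hessian bound.
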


We consider
\begin{equation}
  h_{\epsilon}(x)=\di\int_{B(0,\epsilon)} h(x-y)\varphi_{\epsilon}(y)dy,
  \label{h epsilon}
\end{equation}
where $\varphi_{\epsilon}(x)=
\epsilon^{-n}\varphi(\frac x\epsilon)$ is a
 standard mollifier with compact support in a $\epsilon$-neighbourhood of $x$. From this definition, one has 
$$h_{\epsilon}\rightarrow h
\qquad\mbox{ in }\  C^0_{loc}(\bar\Omega\setminus S). 
$$

From now on, we fix $\forall 0<\mu<1$. Let us choose a sequence of $\lambda_{\e}$,
\begin{equation}
  \lambda_{\epsilon}\rightarrow 1-\mu, \mbox{ as } \e\rightarrow 0,
  \label{}
\end{equation}
such that $\lambda_{\epsilon}$ are  regular values of $h_{\epsilon}$.
It follows, for small $\epsilon$ (depending on $\mu$),  that
\begin{equation}
  \Sigma_{\epsilon}:=\{ h_{\epsilon} = \lambda_{\epsilon}\}\subset G
  \label{epsilon hypersurface : def}
\end{equation}
are regular smooth hypersurfaces.\\

We first show that the smooth hypersurfaces $\Sigma_{\epsilon}$
  stays away from 
the singular set $S$ and close to $\{x\in \Omega: h(x)= 1-\mu\}$
for small $\epsilon$.
\begin{lemm} Let
 $\Omega \subset\mathbb R^{n+1}$.
Then 
 the hypersurface $\Sigma_{\epsilon}$
 defined in (\ref{epsilon hypersurface : def}) satisfies 
 \begin{equation}
\lim_{\epsilon\to 0}
  dist\Big(\Sigma_{\epsilon}, \{x : h(x)=1-\mu\}\Big)=0.
  \label{epsilon hypersurface : equ2}
\end{equation}
\label{hypersurface family}
\end{lemm}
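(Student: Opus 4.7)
The plan is to reduce the statement to a standard compactness-plus-continuity argument, using that $\lambda_\epsilon\to 1-\mu$ and that $h_\epsilon$ converges to $h$ in a strong enough sense. Because we have already reduced to bounded $\Omega$, and the preceding lemma shows $h\in C^0(\overline\Omega)$ with $h=0$ on $\partial\Omega$, extending $h$ by zero produces a continuous, compactly supported function on $\mathbb R^{n+1}$, hence a uniformly continuous one. Standard properties of mollifiers then yield
\begin{equation*}
\|h_\epsilon-h\|_{L^\infty(\mathbb R^{n+1})}\longrightarrow 0\qquad\text{as }\epsilon\to 0.
\end{equation*}

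With this in hand, the proof is immediate. For any selection $x_\epsilon\in\Sigma_\epsilon$ one has $h_\epsilon(x_\epsilon)=\lambda_\epsilon$, and therefore
\begin{equation*}
|h(x_\epsilon)-(1-\mu)|\le \|h-h_\epsilon\|_{L^\infty}+|\lambda_\epsilon-(1-\mu)|\longrightarrow 0.
\end{equation*}
I would then argue by contradiction: if the conclusion fails, there exist $\eta>0$, a sequence $\epsilon_k\downarrow 0$, and points $x_k\in\Sigma_{\epsilon_k}$ with $\mathrm{dist}\bl x_k,\{h=1-\mu\}\br\ge \eta$. Since $\overline\Omega$ is compact, extract a subsequence $x_k\to x_0\in\overline\Omega$. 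The preceding display together with continuity of $h$ on $\overline\Omega$ gives $h(x_0)=1-\mu$, so $x_0$ itself lies in the level set $\{h=1-\mu\}$. But the distance-to-a-set function is $1$-Lipschitz, so
\begin{equation*}
0=\mathrm{dist}\bl x_0,\{h=1-\mu\}\br=\lim_{k\to\infty}\mathrm{dist}\bl x_k,\{h=1-\mu\}\br\ge \eta,
\end{equation*}
a contradiction.

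The only real obstacle is justifying the uniform convergence in Step~1, since the paper only asserts local $C^0$ convergence on $\overline\Omega\setminus S$; it is precisely the boundary values $h\equiv 0$ on $\partial\Omega$ (established in the previous lemma) that let us extend $h$ to a continuous function on $\mathbb R^{n+1}$ and thus upgrade to global uniform convergence of the mollification. An alternative avoiding any global extension would be to first show that $\Sigma_\epsilon$ stays in a fixed compact subset of $G$ for all small $\epsilon$ — using $h\ge 1-\mu/2$ in a neighborhood of $S$ and $h\le \mu/2$ in a neighborhood of $\partial\Omega$, both preserved by mollification for small $\epsilon$, so that $h_\epsilon\neq\lambda_\epsilon$ there — and then invoke the paper's $C^0_{loc}$ convergence on that compact set. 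Either route leads to the same conclusion.
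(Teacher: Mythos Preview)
Your argument is correct and is essentially the same as the paper's: both proceed by contradiction, extract a convergent subsequence $x_{\epsilon_k}\to \bar x$ from the compact $\overline\Omega$, and use $h_\epsilon(x_\epsilon)=\lambda_\epsilon\to 1-\mu$ together with continuity of $h$ to conclude $h(\bar x)=1-\mu$, contradicting the assumed positive distance. The paper compresses the convergence step into the single phrase ``by the continuity of $h$'', whereas you spell out the uniform convergence $\|h_\epsilon-h\|_{L^\infty}\to 0$ via the zero extension of $h\in C^0(\overline\Omega)$; this extra care is justified and fills a small gap in the paper's presentation, but the underlying idea is identical.
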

\begin{proof}  Suppose the contrary, there exists 
$\alpha>0$ such that for some 
$x_\epsilon\in \Sigma_\epsilon$,
\begin{equation}\label{A-1}
dist\Big(x_\epsilon,  \{ h=1-\mu\}\Big)\ge \alpha,
\end{equation}
along a sequence of $\epsilon\to 0$.
Passing to another subsequence, we may assume
that $x_\epsilon\to \bar x\in \mathbb R^{n+1}$.
By the continuity of $h$,
$$
h(\bar x)=\lim_{\epsilon\to 0}
h_\epsilon(x_\epsilon)=\lim_{\epsilon\to 0}
\lambda_\epsilon=1-\mu.
$$
It follows from (\ref{A-1}) that 
$|x_\epsilon-\bar x|\ge \alpha>0$,
violating the convergence of $x_\epsilon$ to $\bar x$.
\end{proof}

For every $0<\mu<1/8$, there exists, in view of Lemma
\ref{hypersurface family},  $0<\epsilon_1(\mu)$
such that
$$
\Sigma_\epsilon\subset \{ 1-\frac {5\mu}4 \le h\le
1
- \frac {3\mu}4 \}.
$$

The following is the key lemma.

\begin{lemm}\label{abc}
  For any fixed $0<\mu <1/8$ and $0<\epsilon\le
\epsilon_1(\mu)$, $\exists\ C(\mu)>0$
such that,
for 
$\epsilon>0$ small enough,
  \begin{equation}
    \eta_{\epsilon}\cdot \nabla \delta\ge C(\mu)>0
\quad\mbox{on}\  \Sigma_{\epsilon}.
    \label{positive}
  \end{equation}
\end{lemm}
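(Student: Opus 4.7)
The plan is to exploit the structural identity $h=\delta/\Lambda$, where $\Lambda(x)=\bar\rho(N(x))$, to show that $\nabla h$ has a definite component along $\nabla\delta$. The key geometric fact is that in $G$ the nearest-point map $N$ is constant along each normal segment from $z\in\partial\Omega$ to $m(z)$; since $\nabla\delta(x)=\eta(N(x))$ is precisely the unit inner normal, the composition $\Lambda=\bar\rho\circ N$ is constant along every integral curve of $\nabla\delta$ lying in $G$. Combined with the Lipschitz regularity of $\bar\rho$ provided by Theorem \ref{LN theo} and the $C^1$-regularity of $N$ on $G$ coming from Lemma \ref{C2}, $\Lambda$ is locally Lipschitz in $G$ with $\nabla\Lambda\cdot\nabla\delta=0$ at every point of differentiability. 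Using $|\nabla\delta|\equiv 1$ in $G$ and differentiating $h=\delta/\Lambda$ then gives
\[
\nabla h\cdot\nabla\delta=\frac{1}{\Lambda}\qquad\text{a.e. in }G.
\]

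Next, secure uniform bounds on a compact subset of $G$. Define
\[
K_\mu:=\Bigl\{x\in\Omega:\; 1-\tfrac{5\mu}{4}\le h(x)\le 1-\tfrac{3\mu}{4}\Bigr\}.
\]
Since $h<1$ on $K_\mu$ the set is disjoint from $S$, and since $h>0$ it is disjoint from $\partial\Omega$, so $K_\mu$ is a compact subset of $G$ (using that $\Omega$ has been reduced to a bounded $C^{2,1}$ domain). On $K_\mu$ the continuous positive function $\Lambda$ is bounded, so $1/\Lambda\ge 2c_0$ for some $c_0=c_0(\mu)>0$. Choose a slightly enlarged compact set $K_\mu'\Subset G$ with $K_\mu+\overline{B(0,\epsilon_0)}\subset K_\mu'$ for some $\epsilon_0>0$. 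On $K_\mu'$, $h$ is Lipschitz, so $|\nabla h|\le M$ a.e.\ for some $M=M(\mu)>0$, and $\nabla\delta$ is uniformly continuous there with modulus of continuity $\omega$ (since $\delta\in C^2(G)$ by Lemma \ref{C2}).

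Finally, transfer the bound to the mollification. By Lemma \ref{hypersurface family} and the choice of $\epsilon_1(\mu)$, for $0<\epsilon\le\epsilon_1(\mu)$ small enough we have $\Sigma_\epsilon\subset K_\mu$ and $B(x,\epsilon)\subset K_\mu'$ for every $x\in\Sigma_\epsilon$. Splitting the convolution gives
\[
\nabla h_\epsilon(x)\cdot\nabla\delta(x)=\int\nabla h(x-y)\cdot\nabla\delta(x-y)\,\varphi_\epsilon(y)\,dy+\int\nabla h(x-y)\cdot[\nabla\delta(x)-\nabla\delta(x-y)]\,\varphi_\epsilon(y)\,dy,
\]
where the first integrand equals $\Lambda(x-y)^{-1}\ge 2c_0$ a.e., so the first integral is at least $2c_0$, while the second is bounded in absolute value by $M\,\omega(\epsilon)$. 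For $\epsilon$ small enough that $M\,\omega(\epsilon)\le c_0$, this yields $\nabla h_\epsilon\cdot\nabla\delta\ge c_0$ on $\Sigma_\epsilon$. Since $|\nabla h_\epsilon|\le M$ by the same convolution estimate, and the unit normal to $\Sigma_\epsilon$ in the direction of increasing $h_\epsilon$ is $\eta_\epsilon=\nabla h_\epsilon/|\nabla h_\epsilon|$, we conclude $\eta_\epsilon\cdot\nabla\delta\ge c_0/M=:C(\mu)>0$.

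The main obstacle is justifying the orthogonality $\nabla\Lambda\cdot\nabla\delta=0$; this rests on the global geometric fact that $N$ is constant along each normal segment in $G$, combined with the Lipschitz regularity of $\bar\rho$ from Theorem \ref{LN theo}. Everything else is routine once $\Sigma_\epsilon$ is confined to the compact shell $K_\mu\Subset G$, which is guaranteed by the choice of $\epsilon_1(\mu)$.
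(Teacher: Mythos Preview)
Your proof is correct and follows essentially the same route as the paper: both arguments rest on the geometric fact that $\Lambda=\bar\rho\circ N$ is constant along the integral curves of $\nabla\delta$ in $G$, yielding $\nabla h_\epsilon\cdot\nabla\delta=1/\Lambda+O(\epsilon)$, and then divide by the uniform Lipschitz bound $|\nabla h_\epsilon|\le M(\mu)$ to pass to $\eta_\epsilon$. The only organisational difference is that you first isolate the pointwise identity $\nabla h\cdot\nabla\delta=1/\Lambda$ a.e.\ and then mollify via a convolution splitting, whereas the paper computes the directional difference quotient of $h_\epsilon$ along $\nabla\delta$ directly inside the integral; the underlying estimates are identical. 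One cosmetic point: the lower bound $1/\Lambda\ge 2c_0$ should be taken on the enlarged compact $K_\mu'$ (where $x-y$ lives), not just on $K_\mu$; this is immediate from the same compactness argument.
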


\begin{proof}
  For any $x\in \Sigma_{\epsilon}$, we first give the following claim.\\ 

  {\bf Claim:} \ For
$0<\epsilon\le
\epsilon_1(\mu)$,
 $x\in \Sigma_\epsilon$, 
  \begin{equation} 
\nabla h_{\e}(x)\cdot \nabla\delta(x)
\ge C'(\mu)>0.
  \label{claim}
\end{equation}

Lemma \ref{abc} follows from 
 (\ref{claim}) as follows. 
Since
 $ \{ 1-\frac {3\mu}2 \le h\le
1- \frac \mu 2\}$  stays
positive distance away from the singular set $S$,
and $h$ is locally Lipschitz on $G\setminus
S$, we have
$|\nabla h|\le C''(\mu)$ on 
 $ \{ 1-\frac {3\mu}2 \le h\le
1- \frac \mu 2\}$.
Thus
\begin{equation}
  \begin{array}[]{rll}
    |h_{\e}(x)-h_{\e}(\tilde x)|\le & \Dint |h(x-y)-h(\tilde x -y)|\varphi_{\e}(y)\\
    \le & C''(\mu)\Dint |x-\tilde x|\varphi_{\e}(y)\\
    \le & C''(\mu)|x-\tilde x|,
  \end{array}
  \label{}
\end{equation}
and we have $ |\nabla h_{\e}(x)|\le C''(\mu)$.
 Then estimate (\ref{positive}) follows  from 
$\eta_{\epsilon}=\frac{\nabla h_{\epsilon}(x)}{|\nabla h_{\epsilon}(x)|}$ 
and (\ref{claim}).
\end{proof}

\vspace{.5cm}
\begin{proof}
  ({\bf Proof of Claim (\ref{claim}).}) 
From definition of $h_{\epsilon}$ in (\ref{h epsilon}), we have
  \begin{equation}
    h_{\epsilon}(x+t\nabla \delta(x))-h_{\epsilon}(x) = \di\int_{B(0,\epsilon)}\{ h(x-y+t\nabla \delta(x))-h(x-y)\}\varphi_{\epsilon}(y)dy.
    \label{proof : equ0}
  \end{equation}

Notice that, since $N(x)$ is $C^{1,1}$ off the singular set, 
\begin{equation}
  \begin{array}[]{rll}
    N(x-y+t\nabla\delta(x))=&N(x-y+t\nabla\delta(x-y)+O(t\epsilon))\\
    =&N(x-y+t\nabla\delta(x-y))+O(t\epsilon)\\
    =&N(x-y)+O(t\epsilon),
  \end{array}
  \label{}
\end{equation}
where $N(x-y+t\nabla\delta(x-y))=N(x-y)$, because $\nabla\delta (x-y)$ is the inward normal direction of $\partial \Omega$ at $N(x-y)$.

This yields
\begin{equation}
  \Lambda(x-y+t\nabla\delta(x))=
\bar\rho(N(x-y+t\nabla\delta(x)))=
\bar\rho(N(x-y))+O(t\epsilon)=
\Lambda(x-y)+O(t\epsilon),
  \label{proof : equ0.1}
\end{equation}
where we have used Theorem \ref{LN theo}
which asserts 
  that $\bar\rho$ is a Lipschitz map.

We also have
\begin{equation}
  \begin{array}[]{rll}
&\delta \big(x-y+t\nabla\delta(x)\big)\\
=   &	\big|\big(x-y+t\nabla\delta(x)\big)-N\big(x-y+t\nabla\delta(x)\big)
\big|\\
    =&\big|x-y+t\nabla\delta(x)-N\big(x-y+t\nabla\delta(x-y)\big)+O(t\epsilon)
\big|\\
    =&\big|x-y+t\nabla\delta(x)-N(x-y)\big|+O(t\epsilon).
  \end{array}
  \label{proof : equ0.2}
\end{equation}

For $x\in\Sigma_{\epsilon}$, and $|y|<\epsilon$, using (\ref{proof : equ0.1}) and (\ref{proof : equ0.2}), we have
\begin{equation}
  \begin{array}[]{rll}
    &h\big(x-y+t\nabla\delta(x)\big) - h(x-y)\\
    \\
    =&\di\frac{|(x-y)-N(x-y)+t\nabla\delta(x)|}{\Lambda(x-y)}+O(t\epsilon)-\frac{|(x-y)-N(x-y)|}{\Lambda(x-y)}\\
    \\
    =&\di\frac{|(x-y)-N(x-y)+t\nabla\delta(x)|-|(x-y)-N(x-y)|}{\Lambda(x-y)}+O(t\epsilon)\\
  \end{array}
  \label{proof : equ1}
\end{equation}

By definition, we have 
\begin{equation}
  (x-y)-N(x-y) = |(x-y)-N(x-y)|\cdot\nabla\delta(x-y).
  \label{proof : equ2}
\end{equation}

Applying (\ref{proof : equ2}) to (\ref{proof : equ1}), we have
\begin{equation}
  \begin{array}[]{rll}
    &h\big(x-y+t\nabla\delta(x)\big) - h(x-y)\\
    =&\di\frac{\Big|\big [|(x-y)-N(x-y)|+t\big]\cdot\nabla\delta(x)\Big|-|(x-y)-N(x-y)|}{\Lambda(x-y)}+O(t\epsilon)\\
    =&\di\frac{t}{\Lambda(x-y)}+O(t\epsilon)\\
    =&\di\frac{t}{\Lambda(x)+O(\epsilon)}+O(t\epsilon).
  \end{array}
  \label{proof : equ3}
\end{equation}

Combining (\ref{proof : equ3}) and (\ref{proof : equ0}), we have
   \begin{equation}
\nabla h_{\epsilon}(x)
\cdot \nabla \delta(x)=
\lim_{t\to 0}
\di\frac{h_{\epsilon}(x+t\nabla \delta(x))-h_{\epsilon}(x)}{t}
= \frac 1{ \Lambda(x) }+O(\epsilon).
\label{a1}
\end{equation}
Estimate (\ref{claim}) follow from the above.

\end{proof}

We now prove Theorem \ref{key theorem} for a
$C^{2,1}$ domain.

\begin{theo}
Let   $\Omega\subset \mathbb R^{n+1}$, $n\ge1$.
Then
\begin{equation}
  -\Delta \delta( x )\ge \frac{nH( N(x) )}{n-\delta H( N(x) )},
  \label{key theorem : equation 2}
\end{equation}
in the distribution sense, i.e., for any $\varphi\in C^{\infty}_0(\Omega)$, $\varphi\ge0$, we have
  \begin{equation}
    \Dint_{\Omega}\nabla\delta\nabla \varphi \ge 
\Dint_{\Omega}\frac{n(H\circ N)}{n-\delta (H\circ N)}\varphi.
    \label{}
  \end{equation}
\end{theo}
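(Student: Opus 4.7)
My plan is to use the smooth foliation $\{\Sigma_\epsilon\}$ already constructed to reduce the distributional inequality on all of $\Omega$ to the pointwise inequality on the good set $G$ (Corollary at the end of Section 2), via integration by parts on the inner region, and then pass to the limit.

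Let $\varphi \in C^\infty_0(\Omega)$ with $\varphi \ge 0$, and fix $0 < \mu < 1/8$ and $0 < \epsilon \le \epsilon_1(\mu)$. Set $A_\epsilon := \{h_\epsilon < \lambda_\epsilon\}$. By Lemma \ref{hypersurface family}, $\Sigma_\epsilon$ stays at positive distance from the singular set $S$, so $\overline{A_\epsilon}\setminus\partial\Omega \subset G$, where $\delta \in C^2$ by Lemma \ref{C2}. Since $\varphi$ vanishes in a neighborhood of $\partial\Omega$, the divergence theorem applied to $\varphi\nabla\delta$ on $A_\epsilon$ gives
$$\int_{A_\epsilon}\nabla\delta\cdot\nabla\varphi\, dx = -\int_{A_\epsilon}\varphi\,\Delta\delta\, dx + \int_{\Sigma_\epsilon}\varphi(\nabla\delta\cdot\eta_\epsilon)\, dS,$$
with $\eta_\epsilon = \nabla h_\epsilon/|\nabla h_\epsilon|$ the outward unit normal to $A_\epsilon$ along $\Sigma_\epsilon$.

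The Corollary at the end of Section 2 provides the pointwise inequality $-\Delta\delta \ge \frac{n(H\circ N)}{n - \delta(H\circ N)}$ on $G \supset A_\epsilon$, while Lemma \ref{abc} supplies $\nabla\delta\cdot\eta_\epsilon \ge C(\mu) > 0$ on $\Sigma_\epsilon$. Multiplying each by $\varphi \ge 0$ and combining with the identity above yields
$$\int_{A_\epsilon}\nabla\delta\cdot\nabla\varphi\, dx \ge \int_{A_\epsilon}\varphi\cdot\frac{n(H\circ N)}{n - \delta(H\circ N)}\, dx,$$
the nonnegative surface integral being discarded.

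It remains to pass to the limit first $\epsilon \to 0$ and then $\mu \to 0$. For $\mu$ fixed, $A_\epsilon$ converges in measure to the open set $\{h < 1-\mu\}$, whose closure is compactly contained in $G$; on this set all quantities are continuous and uniformly bounded, so dominated convergence upgrades the inequality to one over $\{h < 1-\mu\}$. As $\mu \to 0$, the sets $\{h < 1-\mu\}$ exhaust $G$, and the corollary to Theorem \ref{LN theo} gives $|S|=0$, so the left-hand side converges to $\int_\Omega \nabla\delta\cdot\nabla\varphi$. The main obstacle is justifying the passage to the limit on the right-hand side, since the integrand $\frac{n(H\circ N)}{n - \delta(H\circ N)}$ may blow up as $\delta\kappa_i \to 1$ along sequences approaching $S$. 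However, its negative part is uniformly bounded by $|H|$ (because $H<0$ forces $n - \delta H > n$), and $H$ is bounded on $\partial\Omega$ by compactness (recall $\Omega$ was reduced to a bounded $C^{2,1}$ domain at the start of the subsection). Together with the $\mu$-uniform bound on the left-hand side, this controls the positive part via monotone convergence, and dominated convergence then delivers the full distributional inequality on $\Omega$.
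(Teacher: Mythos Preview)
Your proof is correct and follows essentially the same approach as the paper: exhaust $G$ by smooth subdomains $A_\epsilon$ bounded by the level surfaces $\Sigma_\epsilon$, apply Green's formula there using the $C^2$ regularity of $\delta$ on $G$, discard the boundary term via the sign condition from Lemma~\ref{abc}, invoke the pointwise inequality on $G$, and pass to the limit. The only notable difference is organizational: the paper collapses your two-step limit ($\epsilon\to 0$ then $\mu\to 0$) into a single diagonal sequence $\lambda_\epsilon\to 1^-$ with $\cup_\epsilon\Omega_\epsilon=G$, and then simply writes ``letting $\epsilon\to 0$ we complete the proof'' without further comment on the convergence of the right-hand side. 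Your treatment of that limit---bounding the negative part of $\frac{nH}{n-\delta H}$ by $|H|$ and controlling the positive part via the uniform bound on the left-hand side---is in fact more careful than what the paper spells out.
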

Note that the function $(H\circ N)(x) $ is well defined
for $x\in G$, so it is a well defined $L^\infty$ function since
$\Omega\setminus G$ is of zero Lebesgue measure.

\begin{proof}
By  Lemma \ref{hypersurface family}, 
we can construct,
using a standard diagonal sequence selection argument,
a sequence of $\lambda_\epsilon\to 1^-$ such that
$$
\Sigma_\epsilon:=
\{x\in \Omega\ :\ h_\epsilon(x)=\lambda_\epsilon\}\
\ \mbox{has}\ C^\infty\ \mbox{boundary},
$$
and 
$$
\Omega_\epsilon:= \{x\in \Omega\ :\ h_\epsilon(x)<\lambda_\epsilon\}
$$
satisfies
  $$
\di\bigcup_{\e>0}\Omega_{\e} = G,
$$
and
\begin{equation}
\eta_\epsilon \cdot \nabla \delta\ge 0,\qquad \mbox{on}\
\partial\Sigma_\epsilon,
\label{B-1}
\end{equation}
where $\eta_\epsilon$ is the unit outer normal
of the boundary of
$
\Omega_\epsilon$.

Since $\delta(x)$ is 
$C^2$  on $\bar\Omega_{\e}\subset G\cup \partial\Omega$,
 we may apply the Green's formula to obtain 
\begin{equation}
  \begin{array}[]{rll}
   \di \int_{\Omega_{\e}}\nabla\delta\nabla\varphi =& -\Dint_{\Omega_{\e}}\varphi\Delta\delta+\int_{\partial \Omega_{\e}}\varphi \frac{\partial \delta}{\partial \eta_{\e}} \\
   \ge & -\Dint_{\Omega_{\e}}\varphi\Delta\delta\\
   \ge &  \Dint_{\Omega_{\e}}\frac{nH\circ N}{n-\delta H\circ N}\varphi,
  \end{array}
  \label{laplacian : proof : equ1}
\end{equation}
where the last two inequalities follow from (\ref{B-1}) and (\ref{comparison good set}) respectively.

Letting $\e \rightarrow 0$ in (\ref{laplacian : proof : equ1}), we complete the proof.
 \end{proof}
 \vspace{0.5cm}

\subsection{ Proof of Theorem \ref{key theorem}} 

As in the previous subsection, we 
 can assume that $\Omega$ is bounded in deriving
 (\ref{distribution sense})
for $\varphi$.

For $z\in\partial \Omega$, let, as in 
\cite{LN},
$$
\tilde m(z)=z+\tilde \rho(z)\eta(z),
$$
where $\eta(z)$ denotes the
unit inner normal to
$\partial \Omega$ at $z$ and $\tilde \rho(z)>0$ is the largest number so that
$$
dist(z+t\eta(z), \partial \Omega)=t,\qquad \forall\ t\in (0, \tilde \rho(z)).
$$
By Lemma 4.2 of \cite{LN} ($C^2$ regularity of
$\partial \Omega$ is enough for the proof),
$\tilde \rho(z)\ge \bar  \rho(z)$.
This implies that 
\begin{equation}\label{aaa1}
B(m(z), \bar \rho(z))\subset \Omega,
\quad z\in \partial B(m(z), \bar\rho(z)),\qquad
\forall\ z\in \partial\Omega.
\end{equation}
\begin{lemm}\label{c1surface}
For every $h\in C^2(\partial \Omega)$ satisfying
$$
0<h(z)<\bar\rho(z),\quad z\in \partial \Omega,
$$
let
$$
\Sigma:=\{z+h(z)\eta(z)\ |\
z\in\partial \Omega\}.
$$
Then $\Sigma$ 
is a $C^1$ hypersurface with
\begin{equation}
\eta^\Sigma(x)\cdot \nabla \delta(x)> 0,\qquad \forall\ x\in \Sigma,
\label{bbb}
\end{equation}
where $\eta^\Sigma(x)$ denotes the unit outer normal of
the boundary of $$
\{z+th(z)\eta(z)\ |\ z\in \partial\Omega, 0<t<1\}.
$$
\end{lemm}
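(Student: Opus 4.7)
The plan is to realize $\Sigma$ as the $C^1$-embedded image of the map $F:\partial\Omega\to\mathbb{R}^{n+1}$ given by $F(z)=z+h(z)\eta(z)$, and then compute its unit outer normal explicitly in a principal coordinate system at $z$. Since $\partial\Omega\in C^2$ gives $\eta\in C^1$ and $h\in C^2$, the map $F$ is $C^1$. For injectivity, suppose $F(z_1)=F(z_2)=x$; the hypothesis $0<h(z_i)<\bar\rho(z_i)$ places $x$ on the inner-normal segment $\{z_i+t\eta(z_i):0<t<\bar\rho(z_i)\}$, which lies in the good set $G$ by definition of $\bar\rho$ (cf.\ Corollary 4.11 of \cite{LN}). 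Since $N$ is single-valued on $G$, $z_1=N(x)=z_2$.

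Next I would show $F$ is an immersion. Choosing principal coordinates at $z$, a direct computation yields $dF(e_i)=(1-h(z)\kappa_i(z))\,e_i+(\partial_i h)(z)\,\eta(z)$ for $i=1,\dots,n$. By Lemma \ref{positiveness} applied at $x=F(z)\in G$ (where $\delta(x)=h(z)$ and $N(x)=z$), we have $1-h(z)\kappa_i(z)>0$ for each $i$; projecting $dF(e_i)$ onto $T_z\partial\Omega$ along $\eta(z)$ gives $(1-h\kappa_i)e_i$, which are linearly independent. Hence $dF$ has rank $n$ and $F$ is a $C^1$ injective immersion, so $\Sigma$ is a $C^1$ hypersurface.

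For the sign inequality, the tangent space $T_x\Sigma$ at $x=F(z)$ is spanned by the vectors $v_i=(1-h\kappa_i)e_i+(\partial_i h)\eta(z)$; solving orthogonality, the unit vector normal to $\Sigma$ must be proportional to $\eta(z)-\sum_i\frac{\partial_i h}{1-h\kappa_i}e_i$. The orientation is fixed by considering the auxiliary parametrization $\Psi(z,t)=z+t\,h(z)\eta(z)$ of $R$ over $\partial\Omega\times(0,1)$; because $1-t\,h(z)\kappa_i(z)>0$ at the intermediate point $\Psi(z,t)\in G$, $\Psi$ is a local $C^1$-diffeomorphism, and the transversal vector $\partial_t\Psi(z,1)=h(z)\eta(z)$ points outward from $R$ at the boundary slice $\Sigma$. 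Thus the outer normal $\eta^\Sigma(x)$ must have a strictly positive $\eta(z)$-component. Since $\nabla\delta(x)=\eta(N(x))=\eta(z)$ on $G$, this gives $\eta^\Sigma(x)\cdot\nabla\delta(x)>0$. The main delicate point is fixing the sign convention for the outer normal; once it is anchored by the transversal $\partial_t\Psi=h\eta$, the positive coefficient of $\eta(z)$ in the explicit unit-normal formula yields \eqref{bbb} immediately.
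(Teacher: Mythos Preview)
Your proof is correct and follows essentially the same route as the paper: both work in principal coordinates at $z$, compute the tangent vectors $dF(e_i)=(1-h\kappa_i)e_i+(\partial_i h)\eta$, use $1-h(z)\kappa_i(z)>0$ to see that $\Sigma$ has a tangent plane, and then fix the orientation via the inner-normal curve $t\mapsto z+th(z)\eta(z)$. The only cosmetic difference is that you obtain $1-h\kappa_i>0$ by invoking Lemma~\ref{positiveness} at $x=F(z)\in G$, whereas the paper re-derives it from the ball inclusion $B(m(z),\bar\rho(z))\subset\Omega$; these are equivalent, and your explicit use of the global injectivity of $F$ (via uniqueness of $N$ on $G$) is a nice addition that the paper leaves implicit.
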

\begin{proof}
For a point $z\in \partial \Omega$, we may assume without loss
of generality that $\bar\rho(z)=1)$. After a translation and rotation,
we may assume that $z=0$ is the origin, and the boundary near
$0$ is given by
$$
x_{n+1}=g(x'),\ \ x'=(x_1, \cdots, x_n),
$$
where $g$ is a $C^2$ function near $0'$ satisfying
$$
g(0')=0,\ \nabla g(0')=0,\ \
(\nabla^2g(0'))\ \mbox{is a diagonal matrix}.
$$
The unit inner normal to $\partial \Omega$
at $(x', g(x')$  near
$0$ is given by the graph of 
$$
\eta(x'):= 
\frac{  (-\nabla g(x'), 1) }
{  \sqrt{  1+|\nabla g(x')|^2  } }.
$$
The set $\Sigma$ is given locally by
$$
X(x'):= (x', g(x'))+
\tilde h(x')\eta(x'),
$$
where
$\tilde h(x')=h(x', g(x'))$ is a $C^2$ function near $0'$.
We know that $\tilde h(0')<\bar\rho(z)=1$.
Clearly $X\in C^1$.  We need to show that
$\Sigma$ indeed has a tangent plane at $X(0')$.

Using notations $e_1=(1, 0, \cdots, 0), ...,
e_{n+1}=(0, \cdots, 0, 1)$, we have, for $1\le \alpha\le n$,
$$
\frac {\partial X}{\partial x_ \alpha}(0')=
e_\alpha+ 
\tilde h_{x_\alpha}(0')e_{n+1}
+\tilde h(0')
\frac {\partial \eta}{ \partial x_\alpha}(0')
=[1-\tilde h(0') g_{x_\alpha x_\alpha}(0')]e_\alpha
+ 
\tilde h_{x_\alpha}(0')e_{n+1}.
$$
By (\ref{aaa1}) and $\bar\rho(z)=1$,
the unit ball centered at $e_{n+1}$ lies in
$\{ x_{n+1}\ge g(x')\}$ near $0$.  It follows that
$g_{x_\alpha x_\alpha}(0')\le 1$. 
Thus
 \begin{equation} \label{ggg}
1-\tilde h(0')g_{ x_\alpha x_\alpha}(0')>0.
 \end{equation}
It follows that $\Sigma$ has a tangent plane at
$X(0')$.
Since $  \tilde  \rho(z) \ge  \rho(z)=1$, we have
$$
 \delta(te_n)=t, \qquad \forall \
0<t<1,
$$
and therefore
$$
 \nabla  \delta(te_n)=e_n,
 \ 0<t<1.
$$
Since $ \eta^ \Sigma(h(0)e_n)$ is the outer normal to
the set, and $ \gamma(t):=th(0)e_n$
belongs to the set for $0<t<1$, we have
$$
  \eta^ \Sigma(h(0)e_n)  \cdot  \nabla  \delta(h(0)e_n)=
\eta^ \Sigma(h(0)e_n)  \cdot e_n
=  \frac 1{  h(0)  }
 \eta^ \Sigma(h(0)e_n)  \cdot      \gamma'(1)  \ge 0.
$$
Moreover, in view of (\ref{ggg}), 
$$
span \ \{  \frac {\partial X}{\partial x_ \alpha}(0') \}
=span \  \{ e_ \alpha+a_ \alpha e_n \},
 \quad  \mbox{for some constants} \ a_ \alpha,
$$
which does not contain $e_n$.
 The inequality  (\ref{bbb}) follows.
\end{proof}

For $\epsilon>0$ small, we construct $\bar\rho_\epsilon
\in C^2(\partial \Omega)$ satisfying
$$
|\bar\rho_\epsilon(z)-\bar\rho(z)|\le \epsilon \bar\rho(z),
\qquad\forall\ z\in \partial \Omega.
$$
Then we let
$$
\Sigma_\epsilon:=\{z+(1-\epsilon)\bar\rho(z)\eta(z)
\ |\ z\in \partial \Omega\}
$$
and
$$
\Omega_\epsilon:=\{z+t (1-\epsilon)\bar\rho_\epsilon(z)\eta(z)\
|\ z\in \partial \Omega, 0<t<1\}.
$$
Clearly,
$\partial \Omega_\epsilon=\Sigma_\epsilon\cup \partial \Omega$.
By Lemma \ref{c1surface},
$\Sigma_\epsilon$ is a $C^1$ hypersurface satisfying
$$
\eta_\epsilon\cdot \nabla \delta\ge 0,\qquad
\mbox{on}\  \Sigma_\epsilon,
$$
where $\eta_\epsilon$ is the unit outer normal of
$\partial \Omega_\epsilon$.
Clearly 
$$
\cup_{\epsilon>0}\Omega_\epsilon=G.
$$
With the above, the proof of
Theorem \ref{key theorem} for $C^{2,1}$
domain $\Omega$ in the previous subsection goes through without
any change, using the fact that $S$ has zero Lebesgue measure.

\subsection{}

Next, we prove the following proposition.

\begin{prop}
  Let $\Omega\subset\mathbb R^{n+1}$, $n\ge1$, and let
$G$ be the good set defined as in (\ref{singular set}). Then
  \begin{equation}
    \di\inf_{ x  \in  G}(-\Delta \delta(x)) 
= \di\inf_{ y \in \partial\Omega}H( y ),
    \label{}
  \end{equation}
  where $H( y )$ is the mean curvature of the boundary at $ y $.
  \label{prop : inf equi}
\end{prop}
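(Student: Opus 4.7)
The plan is to use the explicit formula for $-\Delta\delta$ on the good set $G$ provided by Lemma \ref{hessian}, namely
\begin{equation*}
-\Delta\delta(x) = \sum_{i=1}^n \frac{\kappa_i(N(x))}{1-\delta(x)\kappa_i(N(x))},\qquad x\in G,
\end{equation*}
and then to establish the two inequalities $\inf_{G}(-\Delta\delta) \le \inf_{\partial\Omega} H$ and $\inf_{G}(-\Delta\delta) \ge \inf_{\partial\Omega} H$ separately.

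For the upper bound I would fix an arbitrary $y_0 \in \partial\Omega$ and approach it along the inward unit normal $\eta(y_0)$. Setting $x_t := y_0 + t\,\eta(y_0)$, the $C^2$ regularity of $\partial\Omega$ guarantees some $t_0=t_0(y_0)>0$ such that $x_t \in G$ with $N(x_t)=y_0$ and $\delta(x_t)=t$ for all $t\in(0,t_0)$ (a local tubular-neighbourhood statement). The formula above then gives
\begin{equation*}
-\Delta\delta(x_t) = \sum_{i=1}^n \frac{\kappa_i(y_0)}{1-t\,\kappa_i(y_0)} \longrightarrow \sum_{i=1}^n \kappa_i(y_0) = H(y_0) \quad\text{as } t\to 0^+,
\end{equation*}
so $\inf_{G}(-\Delta\delta) \le H(y_0)$; taking the infimum over $y_0 \in \partial\Omega$ yields one direction.

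For the reverse inequality I would use a direct termwise comparison. For any $x\in G$, Lemma \ref{positiveness} ensures $1-\delta(x)\kappa_i(N(x))>0$ for each $i$, so
\begin{equation*}
\frac{\kappa_i}{1-\delta\kappa_i}-\kappa_i = \frac{\delta\,\kappa_i^2}{1-\delta\kappa_i}\ge 0.
\end{equation*}
Summing over $i$ gives $-\Delta\delta(x)\ge H(N(x))\ge \inf_{y\in\partial\Omega}H(y)$, whence $\inf_{G}(-\Delta\delta) \ge \inf_{\partial\Omega} H$. (Alternatively one can route through Proposition \ref{prop : mean convex} and note $\tfrac{nH}{n-\delta H}\ge H$, since $\tfrac{nH}{n-\delta H}-H=\tfrac{\delta H^2}{n-\delta H}\ge 0$.)

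The only real technical point is the claim $x_t=y_0+t\eta(y_0)\in G$ for all sufficiently small $t>0$. For a bounded $C^2$ domain this is immediate from the uniform positive reach of $\partial\Omega$; for unbounded $\Omega$ one only needs the local version, which follows from $C^2$ smoothness at $y_0$ bounding the principal curvatures in a $\partial\Omega$-neighbourhood of $y_0$ and hence providing a positive lower bound on the radius of uniqueness of the nearest-point projection there. Once this is in hand, the rest of the argument is purely algebraic.
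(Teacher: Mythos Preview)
Your proof is correct and essentially identical to the paper's own argument: both directions rest on the Hessian formula from Lemma \ref{hessian}, with the lower bound coming from the termwise inequality $\frac{\kappa_i}{1-\delta\kappa_i}\ge \kappa_i$ (the paper phrases this as monotonicity of the sum in $\delta$, which is the same computation) and the upper bound from letting $x_t=y_0+t\eta(y_0)$ tend to the boundary. Your explicit discussion of why $x_t\in G$ for small $t$ is in fact more careful than the paper, which simply asserts this.
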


 \begin{proof}
   From Lemma \ref{hessian}, we have
   \begin{equation}
     -\Delta\delta( x )=\sum^n_{i=1}\frac{\kappa_i(N(x))}{1-\delta(x)
\kappa_i(N(x))},\qquad x\in G.
     \label{aaa}
   \end{equation}
Since 
$\sum^n_{i=1}\frac{\kappa_i}{1-\delta\kappa_i}$
 is a nondecreasing
 function of $\delta$ independent of the sign of $\kappa_i$,
as long as $1-\delta\kappa_i>0$ for all $i$,
we have, in view of (\ref{aaa}),
$$
-\Delta \delta(x)
\ge 
\sum^n_{i=1} \kappa_i(N(x))=H(N(x))\ge
\di\inf_{ y \in \partial\Omega}H( y ), \qquad x\in G.
$$
It follows that
  $$
\di\inf_{ x  \in  G}(-\Delta \delta(x))
\ge  \di\inf_{ y \in \partial\Omega}H( y ).
$$
On the other hand,
for every $y\in \partial \Omega$,
since $x_t=y+t\nu(y)\in G$ for $t>0$ small,
we have, in view of (\ref{aaa}), 
$$
  \di\inf_{ x  \in  G}(-\Delta \delta(x))
\le \lim_{t\to 0^+}(-\Delta\delta(x_t))
= H(y).
$$
Thus 
$$
  \di\inf_{ x  \in  G}(-\Delta \delta(x))
\le\di\inf_{ y \in \partial\Omega}H( y ).
$$
Proposition \ref{prop : inf equi} is proved.
\end{proof}

As a direct corollary, we prove the equivalence theorem, Theorem \ref{equivalence theorem}.
\begin{proof}
  {\bf (Proof of Theorem \ref{equivalence theorem})} By definition, if $\delta( x )$ is superharmonic, then $-\Delta\delta( x )\ge0$, for any $ x \in G$. If $\Omega$ is weakly mean convex, then $H( y )\ge0$, for any $ y \in \partial \Omega$. Then the proof follows directly from Proposition \ref{prop : inf equi}.
\end{proof}

\begin{rema}
  Geometrically, $-\Delta \delta( x )$ is the mean curvature of the level surface of $\delta$ through $ x $
  at $ x $, see, e.g., Gilbarg-Trudinger \cite{GT}. The geometric interpretation of
  Theorem \ref{equivalence theorem} is that, the level surface of $\delta$ is mean convex through
  $ x \in\Omega$ if and only if the boundary is mean convex. The comparison between level surface and boundary is
  evident since $-\Delta \delta$ is a monotonically increasing function as $\delta\to 0$ along the perpendicular
  direction, which is true even when near points have negative principal curvature.
\end{rema}

\begin{rema}
  Another estimate on $-\Delta\delta$ can be found in Proposition \ref{estimate on Delta} which states that the growth of $-\Delta\delta$ with respect to $\delta$ is at least a polynomial growth of degree $p-1$.
\end{rema}

  \section{Proofs of Main theorems}\label{main results}

  We first observe the following identity.

\begin{lemm}
  \label{hardy identity}
  Let $\Omega\subset\mathbb R^{n+1}$.
 For any $f\in C^{\infty}_0(\Omega)$, the following holds
  \begin{equation}
    \begin{array}{rll}
    \Dint_{\Omega}|\nabla f|^2 d x  - \di \frac{1}{4}\Dint_{\Omega}\frac{f^2}{\delta^2}d x
    =\Dint_{\Omega}\Big|\nabla f-\frac{f\nabla \delta}{2\delta}\Big|^2d x+  \Dint_{\Omega}\nabla\delta\nabla \frac{f^2}{2\delta}dx.
    \label{hardy identity : equ}
  \end{array}
  \end{equation}
\end{lemm}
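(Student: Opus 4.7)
The plan is to verify the identity pointwise almost everywhere on $\Omega$ and then integrate, using two standard facts about the distance function: first, $\delta$ is 1-Lipschitz (so by Rademacher's theorem it is differentiable a.e.), and second, $|\nabla\delta|^2=1$ almost everywhere in $\Omega$. Since $f\in C_0^\infty(\Omega)$ has compact support in $\Omega$, on $\supp(f)$ we have $\delta\ge \delta_0>0$, so all expressions involving division by $\delta$ are well-defined and integrable without any issue at the boundary or the singular set.

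The computation is entirely algebraic. First, I would expand the square:
\begin{equation}
\left|\nabla f - \frac{f\nabla\delta}{2\delta}\right|^2
= |\nabla f|^2 - \frac{f\,\nabla f\cdot\nabla\delta}{\delta}
+ \frac{f^2|\nabla\delta|^2}{4\delta^2}.
\label{plan:exp1}
\end{equation}
Next, using the Leibniz rule $\nabla\bigl(f^2/(2\delta)\bigr)
= (f\nabla f)/\delta - (f^2\nabla\delta)/(2\delta^2)$, I would compute the second integrand:
\begin{equation}
\nabla\delta\cdot\nabla\frac{f^2}{2\delta}
= \frac{f\,\nabla f\cdot\nabla\delta}{\delta}
- \frac{f^2|\nabla\delta|^2}{2\delta^2}.
\label{plan:exp2}
\end{equation}
Adding \eqref{plan:exp1} and \eqref{plan:exp2} and invoking $|\nabla\delta|^2=1$ a.e., the cross terms $\pm f\,\nabla f\cdot\nabla\delta/\delta$ cancel, and the remaining $f^2$ terms combine as $f^2/(4\delta^2) - f^2/(2\delta^2) = -f^2/(4\delta^2)$. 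Hence the sum of the two integrands equals $|\nabla f|^2 - f^2/(4\delta^2)$ a.e., and integrating over $\Omega$ yields \eqref{hardy identity : equ}.

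The only subtlety is to justify that \eqref{plan:exp1} and \eqref{plan:exp2} make sense as pointwise a.e.\ identities: since $f$ is smooth and $\delta$ is Lipschitz, the product $f^2/(2\delta)$ is Lipschitz on $\supp(f)$ (because $\delta$ is bounded below there), so its gradient exists a.e.\ and the Leibniz rule applies a.e. There is no obstacle coming from the singular set $S$, because $S$ has Lebesgue measure zero and all integrands above are bounded on $\supp(f)$, so $S$ contributes nothing to either side of the identity. No integration by parts or distributional argument is needed for this lemma — it is a purely pointwise identity — which is why it is invoked as the starting point before Theorem \ref{key theorem} is later used to handle the remaining $\int \nabla\delta\cdot\nabla(f^2/(2\delta))$ term.
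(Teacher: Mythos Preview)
Your proof is correct and follows essentially the same route as the paper: both arguments compute $\nabla\delta\cdot\nabla(f^2/(2\delta))$ via the Leibniz rule, expand the square (the paper phrases this via the identity $|X|^2-|Y|^2=|X-Y|^2+2\langle X,Y\rangle-2|Y|^2$ with $X=\nabla f$, $Y=\frac{f}{2\delta}\nabla\delta$, which is the same algebra), and invoke $|\nabla\delta|=1$ a.e.\ to combine terms. Your justification that $f^2/(2\delta)$ is Lipschitz on $\supp(f)$ because $\delta\ge\delta_0>0$ there matches the paper's remark verbatim.
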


\begin{proof}

Since $f\in C^{\infty}_0(\Omega)$, $\di\frac{f^2}{\delta}$ is a Lipschitz function compactly supported in $\Omega$. We have
\begin{equation}\label{}
  \begin{array}{rll}
     \Dint_{\Omega}\nabla\delta\nabla \frac{f^2}{2\delta}dx
    =&-\Dint_{\Omega}\frac{f^2|\nabla \delta|^2}{2\delta^2}d x +  \di\int_{\Omega}\frac{{f}\nabla \delta\cdot\nabla f}{\delta}d x\\
=& -\Dint_{\Omega}\frac{f^2}{2\delta^2}d x +  \di\int_{\Omega}\frac{{f}\nabla \delta\cdot\nabla f}{\delta}d x ,
   \end{array}
   \label{divergence}
\end{equation}
where the last step follows from $|\nabla \delta|=1$ a.e. in $\Omega$. Using the elementary identity
$$
|X|^2-|Y|^2=|X-Y|^2+2<X,Y>-2|Y|^2,
$$
and letting $X=\nabla f$, $Y=\frac{f}{2\delta}\nabla \delta$, we have the following pointwise identity,
\begin{equation}
  \begin{array}[]{rll}
    |\nabla f|^2 - \frac{f^2|\nabla\delta|^2}{4\delta^2} =& |\nabla f-\frac{f\nabla \delta}{2\delta}|^2 + \frac{{f}\nabla f\cdot \nabla\delta}{\delta}-\frac{f^2|\nabla \delta|^2}{2\delta^2}.\\
  \end{array}
  \label{}
\end{equation}
Upon integration we have
$$
  \begin{array}[]{rll}
    & \di\int_{\Omega}|\nabla f|^2d x  - \int_{\Omega}\frac{f^2}{4\delta^2}d x \\
    =& \di\int_{\Omega}|\nabla f-\frac{f\nabla \delta}{2\delta}|^2d x+\di\int_{\Omega}\frac{{f}\nabla \delta\cdot\nabla f}{\delta}d x-\Dint_{\Omega}\frac{f^2}{2\delta^2}d x\\
    =&\di\int_{\Omega}|\nabla f-\frac{f\nabla \delta}{2\delta}|^2d x+\Dint_{\Omega}\nabla\delta\nabla \frac{f^2}{2\delta}dx,\\
  \end{array}
  \label{}
$$
where we have used (\ref{divergence}) in the last step.
\end{proof}

We now prove Theorem \ref{sharp hardy inequality} ($p=2$) and Theorem \ref{main theorem}.

\begin{proof}{\bf (Proof of Theorem \ref{sharp hardy inequality} ($p=2$) and Theorem \ref{main theorem})}
  By
 Theorem \ref{key theorem},
and a standard density argument,
   \begin{equation}
    \begin{array}[]{rll}
 \Dint_{\Omega}\nabla\delta\nabla \frac{f^2}{2\delta}dx \ge  \Dint_{\Omega}\frac{nH}{n-\delta H}\frac{f^2}{2\delta}dx.
    \end{array}
    \label{distribution form}
  \end{equation}
Applying (\ref{distribution form}) to (\ref{hardy identity : equ}) in Lemma \ref{hardy identity}, we have
  \begin{equation}
    \begin{array}{rll}
    \Dint_{\Omega}|\nabla f|^2 d x  - \di \frac{1}{4}\Dint_{\Omega}\frac{f^2}{\delta^2}d x    \ge &\Dint_{\Omega} \frac{nH}{n-\delta H}\frac{f^2}{2\delta}d x+\Dint_{\Omega}\Big|\nabla f-\frac{f\nabla \delta}{2\delta}\Big|^2d x\\
    \ge&\Dint_{\Omega} \frac{nH}{n-\delta H}\frac{f^2}{2\delta}d x.
    \label{main theorem : proof : equ1}
  \end{array}
  \end{equation}
  Now, suppose $H_0>0$. Otherwise, when $H_0=0$, the theorems hold trivially from (\ref{main theorem : proof : equ1}). Let $\phi(t):=\frac{1}{at-t^2}$, with $a>0$. First, we have the following elementary inequality,
 \begin{equation}
   \phi(t)\ge \frac{4}{a^2}, \quad \mbox{for all}\ t\in (0,a)
   \label{}
 \end{equation}
 since the minimum of $\phi(t)$ is attained at $t_0=\frac{a}2$.

Let $a=\frac{n}{H}$ and $t=\delta$. For
$\forall x\in\Omega\setminus S$, the fact that $t<a$ in this
case follows from (\ref{positivity}). Consequently, we have $ \di\frac{H}{(n-\delta H)\delta}\ge \frac{4H^2}{n^2} $ for $x\in G$ and
\begin{equation}
   \begin{array}{rll}
\Dint_{\Omega} \frac{nH}{n-\delta H}\frac{f^2}{2\delta}d x\ge \Dint_{\Omega}  \frac{2}{n}H^2f^2d x
   \label{main theorem : proof : equ2}
 \end{array}
 \end{equation}
  where we have used $\Omega\setminus G$ has measure zero.

 Apply (\ref{main theorem : proof : equ2}) to (\ref{main theorem : proof : equ1}), we have
  \begin{equation}
    \begin{array}{rll}
    \Dint_{\Omega}|\nabla f|^2 d x  - \di \frac{1}{4}\Dint_{\Omega}\frac{f^2}{\delta^2}d x
    \ge&\Dint_{\Omega} \frac{2}{n}H^2f^2d x\\
   \ge&\di\frac{2}{n}H^2_0\Dint_{\Omega}f^2 d x.
 \end{array}
    \label{}
  \end{equation}
 This finishes the proof of improved Hardy inequality in Theorem \ref{main theorem} with $\lambda(n,\Omega)\ge\frac{2}{n}H^2_0$, which also implies the Hardy inequality (\ref{sharp hardy inequality : equ1}) for $p=2$.
\end{proof}

We next prove the $L^p$ version of Hardy inequalities. First we give an inequality as a $L^p$ version of Lemma \ref{hardy identity}. In the context of convex domains, the following method was used first in \cite{FMT}.
\begin{lemm}
  \label{hardy : lemma 2}
  Let $\Omega\subset\mathbb R^{n+1}$.
 For any $f\in C^{\infty}_0(\Omega)$ and $p>1$, the following holds
  \begin{equation}
    \begin{array}{rll}
\Dint_{\Omega}|\nabla f|^p d x -\di\Big( \frac{p-1}p \Big)^p\Dint_{\Omega}\frac{|f|^p}{\delta^p}d x
\ge   \di\Big( \frac{p-1}p \Big)^{p-1}\Dint_{\Omega}\nabla\delta\nabla\frac{|f|^p}{\delta^{p-1}}d x.
    \label{hardy : lemma 2 : equ}
  \end{array}
  \end{equation}
\end{lemm}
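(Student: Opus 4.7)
The natural generalization of the identity in Lemma~\ref{hardy identity} to the $L^p$ setting is an \emph{inequality} coming from the convexity of $|X|^p$. My plan is to deduce the claim from the pointwise convexity bound
\[
  |X|^p \ge |Y|^p + p\,|Y|^{p-2}Y\cdot (X-Y),
\]
valid for all $X,Y\in\mathbb R^{n+1}$ and $p>1$ (with the usual convention that $|Y|^{p-2}Y=0$ when $Y=0$).

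With $c:=\frac{p-1}{p}$, I would apply this with $X=\nabla f$ and the ``Hardy test field''
\[
  Y \;=\; c\,\frac{f\,\nabla\delta}{\delta},
\]
which is well-defined a.e.\ on $\Omega$ since $f\in C_0^\infty(\Omega)$ is compactly supported in $\Omega$ and $\nabla\delta$ exists a.e. Using $|\nabla\delta|=1$ a.e., one computes pointwise
\[
  |Y|^p = c^{p}\,\frac{|f|^p}{\delta^p},\qquad
  p\,|Y|^{p-2}Y\cdot X = p\,c^{p-1}\,\frac{|f|^{p-2}f\,\nabla\delta\cdot\nabla f}{\delta^{p-1}}.
\]
On the other hand, expanding the gradient on the right side of the desired inequality and again using $|\nabla\delta|=1$ a.e.\ gives
\[
  c^{p-1}\,\nabla\delta\cdot\nabla\frac{|f|^p}{\delta^{p-1}}
  \;=\; p\,c^{p-1}\,\frac{|f|^{p-2}f\,\nabla\delta\cdot\nabla f}{\delta^{p-1}} \;-\; p\,c^{p}\,\frac{|f|^p}{\delta^p}.
\]
Combining these two computations with $p|Y|^{p-2}Y\cdot(X-Y)=p|Y|^{p-2}Y\cdot X - p|Y|^p$, a direct algebraic collapse (using the identity $pc^p=(p-1)c^{p-1}\cdot c\cdot\frac{p}{p-1}=$ the straightforward rearrangement) yields
\[
  |Y|^p + p\,|Y|^{p-2}Y\cdot(X-Y)
  \;=\; c^{p}\,\frac{|f|^p}{\delta^p} \;+\; c^{p-1}\,\nabla\delta\cdot\nabla\frac{|f|^p}{\delta^{p-1}}.
\]
Inserting this into the convexity bound gives the pointwise inequality
\[
  |\nabla f|^p \;\ge\; c^{p}\,\frac{|f|^p}{\delta^p} \;+\; c^{p-1}\,\nabla\delta\cdot\nabla\frac{|f|^p}{\delta^{p-1}} \qquad \text{a.e.\ in }\Omega,
\]
and integrating over $\Omega$ gives exactly \eqref{hardy : lemma 2 : equ}.

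The only technical point is the weak differentiability of $|f|^p$ when $f$ changes sign and $1<p<2$: since $f\in C_0^\infty(\Omega)$, the function $|f|^{p-2}f$ is continuous (it vanishes wherever $f=0$) and $\nabla |f|^p = p|f|^{p-2}f\,\nabla f$ holds a.e., which is all that is needed; moreover $f$ is compactly supported in $\Omega$, so all integrands are supported away from $\partial\Omega$ and the singular set $S$ has Lebesgue measure zero, so dividing by $\delta$ or $\delta^{p-1}$ causes no issue. I do not expect a serious obstacle beyond bookkeeping; the whole content of the lemma is the convexity inequality for $|X|^p$ combined with the gradient expansion of $|f|^p/\delta^{p-1}$.
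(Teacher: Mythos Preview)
Your proposal is correct and follows essentially the same route as the paper: both use the convexity inequality $|X|^p-|Y|^p\ge p|Y|^{p-2}\langle Y,X-Y\rangle$ with the identical choice $X=\nabla f$, $Y=\frac{p-1}{p}\frac{f}{\delta}\nabla\delta$, and then identify the right-hand side with $\big(\frac{p-1}{p}\big)^{p-1}\nabla\delta\cdot\nabla\frac{|f|^p}{\delta^{p-1}}$ using $|\nabla\delta|=1$ a.e. Your write-up is in fact slightly more explicit than the paper's in spelling out the algebraic collapse and in addressing the a.e.\ differentiability of $|f|^p$ for $1<p<2$.
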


\begin{proof}
Recall the following elementary inequality for vectors when $p>1$,
\begin{equation}
|X|^p-|Y|^p\ge p|Y|^{p-2}<X-Y,Y>.
\label{elementary inequality}
\end{equation}
Let $X=\nabla f$, $Y=\frac{p-1}{p}\frac{f}{\delta}\nabla \delta$, then the following pointwise identity holds
\begin{equation}
  \begin{array}[]{rll}
    |\nabla f|^p -\big(\frac{p-1}{p}\big)^p \frac{|f|^p|\nabla\delta|^p}{\delta^p} \ge \big(\frac{p-1}{p}\big)^{p-1}|\nabla\delta|^{p-2}\nabla \delta\nabla \frac{|f|^p}{\delta^{p-1}},
  \end{array}
  \label{hardy : lemma 2 : proof : equ1}
\end{equation}
where we have used that $X-Y=\delta^\frac{p-1}{p}\nabla \frac{f}{\delta^\frac{p-1}{p}}$.
Using the fact that $|\nabla\delta|=1$, we finished the proof upon integration.
\end{proof}

Next we derive a Brezis-Marcus type of improved $L^p$ Hardy inequality on weakly mean convex domains.
\begin{theo}\label{improved hardy : Lp}
  Let $\Omega\subset\mathbb R^{n+1}$, $n\ge1$.
 Suppose $\Omega$ is {\em weakly mean convex}, then for any $f\in C^{\infty}_0(\Omega)$, $p>1$, the following holds:
  \begin{equation}
    \Dint_{\Omega}|\nabla f|^p d x \ge\di\Big( \frac{p-1}p \Big)^p\Dint_\Omega\frac{|f|^p}{\delta^p}d x  + \lambda(n,p,\Omega) \int_{\Omega}|f|^pd x .
    \label{3.20}
  \end{equation}
  where $\lambda(n,p,\Omega)=\di\Big(\frac{p-1}p\Big)^{p-1}\inf_{\Omega\setminus S}\frac{-\Delta \delta}{\delta^{p-1}}\ge\frac{p}{n^{p-1}}H_0^p$.
  \label{Lp}
 \end{theo}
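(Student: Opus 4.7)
The plan is to combine the $L^p$ identity of Lemma~\ref{hardy : lemma 2} with the distributional superharmonicity of the distance function from Theorem~\ref{key theorem}. By Lemma~\ref{hardy : lemma 2}, it suffices to prove the lower bound
\[
\Dint_\Omega \nabla\delta\cdot\nabla\frac{|f|^p}{\delta^{p-1}}\,dx\ \ge\ \Dint_{G}\frac{-\Delta\delta}{\delta^{p-1}}|f|^p\,dx,
\]
since factoring out the infimum on $G=\Omega\setminus S$ then produces (\ref{3.20}) with $\lambda(n,p,\Omega)=((p-1)/p)^{p-1}\inf_{G}(-\Delta\delta)/\delta^{p-1}$, exactly as in the statement.

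The natural test function $\varphi:=|f|^p/\delta^{p-1}$ is nonnegative, compactly supported in $\Omega$, and in fact $C^1$ (since $|f|^p\in C^1$ for $p>1$, and $\delta$ is bounded below on $\supp f$). We apply the exhaustion argument from the proof of Theorem~\ref{key theorem} to this $\varphi$: on each smooth subdomain $\Omega_\e\subset G$ constructed there, $\delta\in C^2(\overline{\Omega_\e})$, so classical integration by parts gives
\[
\Dint_{\Omega_\e}\nabla\delta\cdot\nabla\varphi\,dx = -\Dint_{\Omega_\e}\Delta\delta\cdot\varphi\,dx + \Dint_{\partial\Omega_\e}\varphi\,(\eta_\e\cdot\nabla\delta)\,dS.
\]
The boundary integral is nonnegative because $\varphi\equiv 0$ near $\partial\Omega$ and $\eta_\e\cdot\nabla\delta\ge 0$ on $\Sigma_\e$, so letting $\e\to 0$ and using $|\Omega\setminus G|=0$ with $|\nabla\delta|=1$ a.e.\ yields the displayed lower bound. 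Combining with Lemma~\ref{hardy : lemma 2} and pulling the essential infimum out of the integral establishes (\ref{3.20}) with $\lambda(n,p,\Omega)$ as defined.

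For the explicit estimate $\lambda(n,p,\Omega)\ge pH_0^p/n^{p-1}$, we may assume $H_0>0$, the case $H_0=0$ being trivial. Theorem~\ref{key theorem} gives, pointwise on $G$, $-\Delta\delta\ge nH/(n-\delta H)$; since $t\mapsto nt/(n-\delta t)$ is increasing on $[0,n/\delta)$, this quantity is at least $nH_0/(n-\delta H_0)$. Hence $(-\Delta\delta)/\delta^{p-1}\ge nH_0/[(n-\delta H_0)\delta^{p-1}]$, and substituting $s=\delta H_0/n\in(0,1)$ reduces the problem to maximizing $(1-s)s^{p-1}$ on $(0,1)$; a one-variable calculation gives the maximum $(p-1)^{p-1}/p^p$ at $s=(p-1)/p$. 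Therefore $\inf_G(-\Delta\delta)/\delta^{p-1}\ge p^p H_0^p/[(p-1)^{p-1}n^{p-1}]$, and multiplication by $((p-1)/p)^{p-1}$ yields $\lambda(n,p,\Omega)\ge pH_0^p/n^{p-1}$. The only delicate point is that Theorem~\ref{key theorem} is stated for $\varphi\in C_0^\infty$, but its proof uses $\varphi$ only through its compact support and nonnegativity (to discard the boundary integral on $\Sigma_\e$), so the extension to the Lipschitz $\varphi=|f|^p/\delta^{p-1}$ is automatic.
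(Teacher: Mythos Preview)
Your proof is correct and follows essentially the same route as the paper: combine Lemma~\ref{hardy : lemma 2} with the distributional lower bound coming from the exhaustion $\Omega_\epsilon\nearrow G$ in Theorem~\ref{key theorem}, then optimize the one-variable function $(1-s)s^{p-1}$ to extract the explicit constant $pH_0^p/n^{p-1}$. One small point worth noting is that you go through $\int_G(-\Delta\delta)\,|f|^p/\delta^{p-1}$, which directly yields the constant $\lambda(n,p,\Omega)=((p-1)/p)^{p-1}\inf_G(-\Delta\delta)/\delta^{p-1}$ as stated, whereas the paper's written proof passes immediately to the weaker pointwise bound $nH/(n-\delta H)$ and so, read literally, only establishes the inequality with the smaller constant $pH_0^p/n^{p-1}$; your version is therefore slightly more faithful to the theorem's statement.
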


\begin{proof}
  The proof is similar as in the proof of the $L^2$ version. By the same reasoning as in (\ref{distribution form}) and (\ref{hardy : lemma 2 : equ}), one simply observes that
  \begin{equation}
    \begin{array}[]{rll}
      \Dint_{\Omega}\nabla\delta\nabla \frac{f^p}{\delta^{p-1}}dx \ge  \Dint_{\Omega}\frac{nH}{n-\delta H}\frac{f^p}{\delta^{p-1}}dx.
    \end{array}
    \label{distribution form 2}
  \end{equation}

Applying (\ref{distribution form 2}) to (\ref{hardy : lemma 2 : equ}) in Lemma \ref{hardy : lemma 2}, we have
\begin{equation}
  \begin{array}{rll}
    \Dint_{\Omega}|\nabla f|^p d x -\di\Big( \frac{p-1}p \Big)^p\Dint_{\Omega}\frac{|f|^p}{\delta^p}d x \ge \di\Big( \frac{p-1}p \Big)^{p-1}\Dint_{\Omega}\frac{nH}{n-\delta H}\frac{|f|^p}{\delta^{p-1}}d x.
  \label{Lp : proof : equ1}
\end{array}
\end{equation}

  Let $\phi(t):=\frac{1}{at^{p-1}-t^p}$, with $p>1$ and $a>0$. First, we have the following elementary inequality,
 \begin{equation}
   \phi(t)\ge \frac{p}{a^p}(\frac{p}{p-1})^{p-1},\quad t\in (0,a),
   \label{}
 \end{equation}
 since the minimum of $\phi(t)$ for $t\in (0,a)$ is attained at $t_0=a\frac{p-1}p$.

 Suppose $H_0>0$, otherwise $H_0=0$ and the proof is finished. Let $a=\frac{n}{H}$ and $t=\delta$, then we have $ \di\frac{nH}{(n-\delta H)\delta^{p-1}}\ge \frac{pH^{p}}{n^{p-1}}(\frac{p}{p-1})^{p-1}$ for $x\in G$ and
\begin{equation}
   \begin{array}{rll}
     \Dint_{G} \frac{nH}{n-\delta H}\frac{|f|^p}{\delta^{p-1}}d x\ge& \Big (\frac{p}{p-1}\Big)^{p-1}\frac{p}{n^{p-1}}\Dint_{G} H^{p}|f|^pd x\\
\ge& \Big (\frac{p}{p-1}\Big)^{p-1}\frac{p}{n^{p-1}}H_0^p\Dint_{G} |f|^pd x.\\
   \label{Lp : proof : equ2}
 \end{array}
 \end{equation}

Using the fact that $\Omega\setminus G$ has measure zero and applying (\ref{Lp : proof : equ2}) to (\ref{Lp : proof : equ1}), we finish the proof.
 \end{proof}

The next result may be of independent interest.
\begin{prop}
  \label{estimate on Delta}
  Let $\Omega\subset \mathbb R^{n+1}$.
 Suppose $\partial \Omega$ is weakly mean convex. Let $H_0:=\inf_{\partial\Omega}H( y )$ and $\delta( x )$ be the distance function to the boundary, then for $p>1$, and $\forall x\in \Omega\setminus S$,
  \begin{equation}
    -\Delta\delta( x )\ge\frac{pH^p( y )}{n^{p-1}}\Big(\frac{p}{p-1}\Big)^{p-1}\delta^{p-1}( x )\ge \frac{pH^p_0}{n^{p-1}}\Big(\frac{p}{p-1}\Big)^{p-1}\delta^{p-1}( x ),
    \label{}
  \end{equation}
  where $ y =N( x )\in \partial\Omega$ is the near point of $ x $.
\end{prop}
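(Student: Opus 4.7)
The plan is to show that this proposition is essentially just a pointwise unpacking of the elementary inequality that already appeared inside the proof of Theorem \ref{improved hardy : Lp}, combined with the pointwise estimate on $-\Delta\delta$ recorded in the corollary at the end of Section~\ref{distance function and boundary geometry}.

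First I would reduce to the nontrivial case $H(y)>0$: if $H(y)=0$, then the right-hand side of the claimed inequality vanishes, and $-\Delta\delta(x)\ge 0$ in that case follows from Lemma~\ref{hessian} and Proposition~\ref{prop : mean convex} since $H(y)=0\ge 0$ already gives $\frac{nH(y)}{n-\delta H(y)}=0$. So assume $H(y)>0$. For such a point $x\in G=\Omega\setminus S$, Lemma~\ref{positiveness} guarantees $1-\delta(x)\kappa_i(y)>0$ for every $i$, hence in particular $n-\delta(x)H(y)>0$; the pointwise inequality recorded just after Proposition~\ref{prop : mean convex} then yields
\[
-\Delta\delta(x)\ \ge\ \frac{nH(y)}{n-\delta(x)H(y)}.
\]

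Next I would invoke the elementary one-variable inequality already used in the proof of Theorem \ref{improved hardy : Lp}: for $a>0$ and $p>1$, the function $\phi(t):=\frac{1}{at^{p-1}-t^p}$ attains its minimum on $(0,a)$ at $t_0=a\frac{p-1}{p}$, giving
\[
\phi(t)\ \ge\ \frac{p}{a^p}\Bigl(\frac{p}{p-1}\Bigr)^{p-1},\qquad t\in(0,a).
\]
I would apply this with $a=n/H(y)$ and $t=\delta(x)$; the condition $t<a$ is exactly $n-\delta H>0$, which we just verified. After clearing denominators this is equivalent to
\[
\frac{nH(y)}{n-\delta(x)H(y)}\ \ge\ \frac{pH(y)^p}{n^{p-1}}\Bigl(\frac{p}{p-1}\Bigr)^{p-1}\delta(x)^{p-1},
\]
and combining with the previous display gives the first inequality of the proposition.

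Finally, the second inequality is immediate: since $H(y)\ge H_0\ge 0$ by the weakly mean convex hypothesis, we have $H(y)^p\ge H_0^p$, so the right-hand side only decreases when $H(y)$ is replaced by $H_0$. There is no real obstacle here; the only subtlety is verifying that the hypothesis $n-\delta H>0$ needed to apply the elementary inequality comes for free on the good set $G$ via Lemma~\ref{positiveness}, and that the $H=0$ degenerate case is handled separately so that the bound $\frac{nH}{n-\delta H}$ is meaningful throughout.
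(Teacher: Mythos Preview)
Your proposal is correct and follows essentially the same route as the paper's own proof, which simply refers back to the pointwise estimate $-\Delta\delta\ge \frac{nH}{n-\delta H}$ on $G$ together with the elementary minimization of $\phi(t)=\frac{1}{at^{p-1}-t^p}$ carried out inside the proof of Theorem~\ref{improved hardy : Lp}. Your explicit verification that $n-\delta H>0$ via Lemma~\ref{positiveness} and your separate treatment of the degenerate case $H(y)=0$ make the argument cleaner and more self-contained than the paper's terse back-reference.
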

\begin{proof}
  When $\delta=0$ the proof follows from Theorem \ref{equivalence theorem}.
  Suppose $\delta>0$. When $p=2$, the proof of
  \begin{equation}
    \frac{-\Delta\delta( x )}{\delta( x )}\ge\frac{4}{n}H^2( y )
    \label{}
  \end{equation}
can be found in the proof of Theorem \ref{sharp hardy inequality} ($p=2$) and \ref{main theorem}. For general $p>1$, the proof of
  \begin{equation}
    \frac{-\Delta\delta( x )}{\delta^{p-1}( x )}\ge\frac{pH^p_0}{n^{p-1}}\Big(\frac{p}{p-1}\Big)^{p-1}
    \label{3.27}
  \end{equation}
  can be found in the proof of Theorem \ref{improved hardy : Lp} after using inequality
  (\ref{key theorem : equation}).
\end{proof}

As mentioned in the introduction, the geometric requirement of weakly mean convexity cannot be weakened for sharp Hardy-type inequalities. However, by adding an extra positive term to the left hand-side of the inequality, one can still prove a Hardy type inequality for general domains. In particular, we have the following inequality for domains with boundaries that have points of negative mean curvature.

\begin{theo}
  Let $\Omega\subset\mathbb R^{n+1}$, $n\ge1$. Suppose $H_0:=\inf_{ y \in\partial \Omega}H( y )<0$. Then for any $f\in C^{\infty}_0(\Omega)$, with $p>1$, the following holds
    \begin{equation}
  \di\int_{\Omega}|\nabla f( x )|^pd x + \Big(\frac{p-1}p\Big)^{p-1}|H_0|\Dint_{\Omega}\frac{|f|^p}{\delta^{p-1}}d x  \ge c(n,p,\Omega)\di\int_{\Omega}\frac{|f|^p}{\delta^p}d x ,
    \label{arbitrary domains : equ1}
  \end{equation}
  where $c(n,p,\Omega)= \Big(\frac{p-1}p\Big)^{p}$ is the same constant as in the mean convex case.
  \label{arbitrary domains}
\end{theo}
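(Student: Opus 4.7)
The plan is to combine the pointwise identity of Lemma \ref{hardy : lemma 2} with Theorem \ref{key theorem}, using only the elementary observation that the lower bound $\frac{nH}{n-\delta H}$ is never less than $H$ itself, hence never less than $H_0$.

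First, by Lemma \ref{hardy : lemma 2}, for any $f\in C^{\infty}_0(\Omega)$ we have
\begin{equation*}
\Dint_{\Omega}|\nabla f|^p d x -\Big( \tfrac{p-1}p \Big)^p\Dint_{\Omega}\frac{|f|^p}{\delta^p}d x
\ge   \Big( \tfrac{p-1}p \Big)^{p-1}\Dint_{\Omega}\nabla\delta\cdot\nabla\frac{|f|^p}{\delta^{p-1}}d x.
\end{equation*}
The function $\varphi:=|f|^p/\delta^{p-1}$ is nonnegative, compactly supported in $\Omega$, and Lipschitz (since $f$ vanishes near $\partial\Omega$ and $p>1$), so by standard density the distributional inequality of Theorem \ref{key theorem} applies to it. This yields
\begin{equation*}
\Dint_{\Omega}\nabla\delta\cdot\nabla\frac{|f|^p}{\delta^{p-1}}d x \ge \Dint_{\Omega}\frac{n(H\circ N)}{n-\delta (H\circ N)}\frac{|f|^p}{\delta^{p-1}}d x.
\end{equation*}

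Next, I claim that pointwise on the good set $G$ one has
\begin{equation*}
\frac{nH(N(x))}{n-\delta(x) H(N(x))} \ge H(N(x)) \ge H_0,
\end{equation*}
where the second inequality is the definition of $H_0$. For the first, note that by Lemma \ref{positiveness} we have $n-\delta H=\sum_i(1-\delta\kappa_i)>0$ on $G$. If $H\ge 0$ then $n-\delta H\le n$ and so $\frac{nH}{n-\delta H}\ge H$; if $H<0$ then $n-\delta H>n$ and $\frac{n|H|}{n-\delta H}<|H|$, which again gives $\frac{nH}{n-\delta H}\ge -|H|=H$. Since $\Omega\setminus G$ has zero Lebesgue measure, this pointwise bound transfers to the integral.

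Combining the three displays,
\begin{equation*}
\Dint_{\Omega}|\nabla f|^p d x -\Big( \tfrac{p-1}p \Big)^p\Dint_{\Omega}\frac{|f|^p}{\delta^p}d x
\ge   \Big( \tfrac{p-1}p \Big)^{p-1} H_0 \Dint_{\Omega}\frac{|f|^p}{\delta^{p-1}}d x = -\Big( \tfrac{p-1}p \Big)^{p-1}|H_0|\Dint_{\Omega}\frac{|f|^p}{\delta^{p-1}}d x,
\end{equation*}
which is exactly (\ref{arbitrary domains : equ1}) after rearrangement. The only delicate point, which I expect to be the main obstacle, is verifying admissibility of the test function $|f|^p/\delta^{p-1}$ in Theorem \ref{key theorem}; this is handled by a routine mollification/density argument using that $f\in C^{\infty}_0(\Omega)$ forces $|f|^p/\delta^{p-1}$ to be Lipschitz with compact support in $\Omega$.
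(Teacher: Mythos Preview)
Your proof is correct and follows essentially the same route as the paper: combine Lemma \ref{hardy : lemma 2} with Theorem \ref{key theorem} (via the same density argument for the test function $|f|^p/\delta^{p-1}$), then use the elementary pointwise bound $\frac{nH}{n-\delta H}\ge H\ge H_0$ on $G$. The only cosmetic difference is that the paper phrases the inequality $\frac{nH}{n-\delta H}\ge H$ via monotonicity of $t\mapsto \frac{nH}{n-tH}$ in $t$, whereas you split on the sign of $H$; both are one-line verifications of the same fact.
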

\begin{proof}
  The proof is the same as in the mean convex case. One only need to notice that the function $\frac{nH_0}{n-\delta H_0}$ is monotonic with respect to $\delta$ for any fixed $H_0\in \mathbb R$, then the following holds on the good set $G$
  \begin{equation}
    \frac{nH_0}{n-\delta H_0}\ge H_0.
    \label{mono}
  \end{equation}
  Applying (\ref{mono}) to (\ref{Lp : proof : equ1}), proceed as before, we obtain
  \begin{equation}
  \Dint_{\Omega}|\nabla f|^p d x \ge\di\Big( \frac{p-1}p \Big)^p\Dint_\Omega\frac{|f|^p}{\delta^p}d x +\Big(\frac{p-1}p\Big)^{p-1}H_0\Dint_{\Omega}\frac{|f|^p}{\delta^{p-1}}d x .
  \label{arbitrary domains : equ2}
\end{equation}
When $H_0<0$, we complete the proof by moving the last term to the left hand-side.
\end{proof}

In the rest of this section, we discuss the sharpness of the boundary geometric condition of weakly mean convexity. We will show by examples that the $H\ge0$ condition cannot be weakened.\\

{\bf Example.} (Exterior domain) Let $\Omega_{\epsilon}:=\mathbb R^{n+1}\backslash B_{\frac{n}{\epsilon}}$ where
$B_{\frac{n}{\epsilon}}$ is a ball with radius $\frac{n}{\epsilon}$ centered at the origin.
The mean curvature of the boundary with respect to the exterior domain $\Omega_{\epsilon}$ is $H\equiv-\epsilon$.
For $\mu_p(\Omega)$ given in (\ref{best constant : hardy}), we use an idea of Marcus, Mizel, and Pinchover to show that
the best constant $\mu_{n+1}(\Omega_{\epsilon})=0$ for each $\epsilon>0$,
see Example 2 in \cite{MMP}.

Consider the sequence of domains $\Omega_k=\frac{1}{k}\Omega_{\epsilon}$, $k\ge1$.
Then, as shown in \cite{MMP}, $\mu_{n+1}(\Omega_k)=\mu_{n+1}(\Omega_{\epsilon})$. On the other hand, by
Lemma~12 of \cite{MMP}, $\lim\sup_{k\rightarrow\infty}\mu_{n+1}(\Omega_k)\le \mu_{n+1}(R^{n+1}_{*})$, where
$\mathbb R^{n+1}_*=\mathbb R^{n+1}\backslash \{0\}$.
According to Example~1 in \cite{MMP}, $\mu_p(\mathbb R^n_*)=|\frac{n-p}{p}|^p$. Thus $\mu_{n+1}(\Omega_{\epsilon}) =0$.
In particular, this example shows that for each $\epsilon>0$  the Hardy inequality (\ref{sharp hardy inequality : equ1}) does not hold on $\Omega_{\epsilon}\subset \mathbb R^2$ having negative mean curvature $-\epsilon$ on the boundary.

In \cite{AL}, Avkhadiev and Laptev construct ellipsoid shells, i.e. two ellipsoids $E_1$, $E_2$, $\overline E_2\subset E_1\subset R^{n+1}$ with $n\ge2$, and show that the sharp Hardy inequality fails on $\Omega:=E_1\backslash \overline E_2$. We can rescale $\Omega$ in such a way that the mean curvature $H( y )\ge-\epsilon$ for all $ y  \in \partial E_2$ with arbitrary $\epsilon>0$ and $H>0$ on $\partial E_1$. Then we have another example which indicates that Hardy inequality with the sharp constant $c(n,p,\Omega)=(\frac{p-1}p)^p$ does not hold in general when the boundary has negative mean curvature.

\section{Other important inequalities on mean convex domains}\label{applications}
Due to the fundamental role that $-\Delta\delta$ plays in Hardy
type inequalities, we can apply the inequality
in Theorem \ref{key theorem} to prove other inequalities. For example, in
\cite{FMT2}, Filippas, Maz'ya, and Tertikas proved several critical Hardy-Sobolev inequalities. As a special case of
their Theorem~5.3, the following holds:\\

 \label{FMT critical hardy sobolev}{\bf Theorem. (Filippas-Maz'ya-Tertikas \cite{FMT2}}) {\em Let $2\le p<n$, $p<q\le \frac{np}{n-p}$, and $\Omega\subset \mathbb R^{n}$ be a bounded domain with $C^2$ boundary. If the distance function $\delta( x ) $ is superharmonic, i.e. $-\Delta \delta\ge 0$, then there exists a positive constant $c=c(\Omega)$ such that for all $u\in C^{\infty}_0(\Omega)$, there holds
  \begin{equation}
    \di\int_{\Omega}|\nabla u|^pd x -(\frac{p-1}p)^p\int_{\Omega}\frac{|u|^p}{\delta^p}d x \ge c\Big(\di\int_{\Omega}\delta^{-q+\frac{q-p}{p}n}|u|^qd x \Big)^{\frac{p}{q}}.
    \label{FMT critical hardy sobolev : equ1}
  \end{equation}
}

As a direct corollary of our Theorem \ref{key theorem}, we can generalize the above theorem to weakly mean convex domains.

\begin{theo}
  \label{FMT critical hardy sobolev : mean convex} Let $2\le p<n$, $p<q\le \frac{np}{n-p}$, and $\Omega\subset \mathbb R^{n}$.
 If the domain is weakly mean convex, then there exists a positive constant $c=c(\Omega)$ such that for all $u\in C^{\infty}_0(\Omega)$, there holds
  \begin{equation}
    \di\int_{\Omega}|\nabla u|^pd x -(\frac{p-1}p)^p\int_{\Omega}\frac{|u|^p}{\delta^p}d x \ge c\Big(\di\int_{\Omega}\delta^{-q+\frac{q-p}{p}n}|u|^qd x \Big)^{\frac{p}{q}}.
   \label{FMT critical hardy sobolev : mean convex : equ1}
  \end{equation}
\end{theo}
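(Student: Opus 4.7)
The plan is to deduce Theorem \ref{FMT critical hardy sobolev : mean convex} by running the original Filippas--Maz'ya--Tertikas argument for their Theorem 5.3 with the pointwise superharmonicity hypothesis $-\Delta\delta\ge 0$ replaced by the distributional inequality supplied by Theorem \ref{key theorem}. First I would inspect the FMT proof to isolate exactly where the condition $-\Delta\delta\ge 0$ enters: in all versions of this argument the superharmonicity is used only through integration by parts against a manifestly nonnegative test function (typically a power of $\delta$ times $|u|^p$ or $|u|^q$ cut off away from $\partial \Omega$), never through a pointwise sign of $\Delta\delta$ at individual points.

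With that isolated, the argument proceeds in the same three moves as the proof of Theorem \ref{improved hardy : Lp}. First, the pointwise Lamb\'e-type identity (\ref{hardy : lemma 2 : equ}) of Lemma \ref{hardy : lemma 2} reduces the left-hand side of (\ref{FMT critical hardy sobolev : mean convex : equ1}) to a gradient remainder plus the term
\[
\Big(\tfrac{p-1}{p}\Big)^{p-1}\int_{\Omega}\nabla\delta\cdot\nabla\frac{|u|^p}{\delta^{p-1}}\,dx.
\]
Since $u\in C^{\infty}_{0}(\Omega)$, the function $|u|^p/\delta^{p-1}$ is nonnegative and Lipschitz with compact support in $\Omega$, so it is an admissible test function for the distributional inequality of Theorem \ref{key theorem} (approximating $|u|^p$ by $(u^2+\eta)^{p/2}-\eta^{p/2}$ and passing $\eta\to 0$ if one wants a $C^1$ approximation). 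This yields
\[
\int_\Omega |\nabla u|^p\,dx-\Big(\tfrac{p-1}{p}\Big)^p\int_\Omega \frac{|u|^p}{\delta^p}\,dx\ \ge\ \Big(\tfrac{p-1}{p}\Big)^{p-1}\int_\Omega \frac{nH}{n-\delta H}\,\frac{|u|^p}{\delta^{p-1}}\,dx\ \ge\ 0,
\]
which is exactly the role played by the hypothesis $-\Delta\delta\ge 0$ in the FMT argument. From this point on, the rest of the FMT proof (rearrangement/interpolation against the $L^q$ norm with the weight $\delta^{-q+(q-p)n/p}$) is a pure consequence of the nonnegativity of the leftover gradient remainder and of the Sobolev/weighted-Sobolev machinery, and needs no change.

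The subtle step, and in my view the main obstacle, is justifying the integration by parts on $\Omega$ despite the presence of the singular set $S$. On a convex or $-\Delta\delta\ge 0$ domain one can integrate directly; here $\delta$ is only $C^2$ on $G=\Omega\setminus S$, and the FMT derivation implicitly requires either smoothness of $\delta$ or a suitable exhaustion. This is precisely what the construction in Section \ref{distribution section} gives: one exhausts $G$ by the $C^1$ hypersurfaces $\Sigma_\epsilon$ of Lemma \ref{c1surface}, performs all of the FMT integrations by parts on $\Omega_\epsilon$ where $\delta\in C^2$, uses $\eta_\epsilon\cdot\nabla\delta\ge 0$ on $\Sigma_\epsilon$ to drop the boundary terms with a favorable sign, and finally passes $\epsilon\to 0$ using $|\Omega\setminus G|=0$ and Fatou/dominated convergence on the Sobolev remainder. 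With these adjustments the FMT scheme goes through verbatim and produces the stated inequality with a constant $c=c(\Omega)$ inherited from their argument.
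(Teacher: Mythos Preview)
Your proposal is correct, but it does more work than the paper. The paper treats this theorem as a one-line corollary: Theorem~\ref{key theorem} already establishes $-\Delta\delta\ge 0$ in the distribution sense on any weakly mean convex $\Omega$ (since $H\ge 0$ makes $\frac{nH}{n-\delta H}\ge 0$), and this distributional superharmonicity is exactly Condition~(C) of \cite{FMT2}; hence the Filippas--Maz'ya--Tertikas theorem applies as a black box (cf.\ Remark~\ref{hardy sobolev : rema}). All of the delicate work you propose to redo inside the FMT argument---handling the singular set $S$, exhausting by the hypersurfaces $\Sigma_\epsilon$, dropping boundary terms via $\eta_\epsilon\cdot\nabla\delta\ge 0$---is already packaged inside the proof of Theorem~\ref{key theorem}. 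Once that distributional inequality is in hand there is nothing further to verify, and no need to re-open the FMT machinery. Your route would of course also work and makes the dependence on Section~\ref{distribution section} explicit, but it duplicates effort.
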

In the extreme case, where $q=\frac{np}{n-p}$,
the right-hand side is precisely the critical Sobolev term.

\begin{rema}
  \label{hardy sobolev : rema} Notice that for $k=1$ the Condition (C) in \cite{FMT2} is equivalent to weakly mean convexity because of Theorem \ref{equivalence theorem} and a bounded $C^2$ domain satisfies Condition (R) in \cite{FMT2}.
\end{rema}

Sobolev inequalities with a sharp Hardy term as in (\ref{FMT critical hardy sobolev : equ1}) have drawn much attention
recently. But the best constant $c$ for the Sobolev term is largely unknown for general domains. If the domains are an
upper half plane or a ball, the best constants are estimated by Tertikas and Tintarev in \cite{TT} for $n>3$ and by Benguria,
Frank, and Loss in \cite{BFL} for $n=3$. When $n=2$, a Hardy-Moser-Trudinger inequality is given by Wang and Ye \cite{WY}.
Recently, Frank and Loss \cite{FL} proved that the constant $c(\Omega)$ in
(\ref{FMT critical hardy sobolev : equ1}) with $q=\frac{np}{n-p}$ can be replaced by a constant $c$ which
is independent of $\Omega$ provided the domain $\Omega$ is convex.

Applying Theorem \ref{equivalence theorem} to Theorem 3.4 in \cite{FMT2}, we can extend the Hardy-Sobolev inequality to weakly mean convex domains.

\begin{theo}\label{hardy sobolev}{\bf (Hardy-Sobolev-Maz'ya Inequality)}
   Let $\Omega\subset\mathbb R^{n+1}$, $n\ge1$ be a domain with a $C^{2}$ boundary. If $\partial \Omega$ is weakly mean convex, then there exists a positive constant
  $C=C(n,p,\Omega)$ such that for any $u\in C^{\infty}_0(\Omega)$
  \begin{equation}
    \di\int_{\Omega}|\nabla u|^pd x  -\Big(\frac{p-1}{p}\Big)^p\int_{\Omega}\frac{|u|^p}{\delta^p}d x \ge C\Big(\di\int_{\Omega}|u|^\frac{np}{n-p}d x \Big)^\frac{n-p}n.
    \label{}
  \end{equation}
\end{theo}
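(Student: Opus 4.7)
The plan is to apply Theorem~3.4 of Filippas--Maz'ya--Tertikas \cite{FMT2} after verifying that weak mean convexity of $\partial\Omega$ supplies the analytic hypothesis used there, namely that the distance function is superharmonic in the sense of distributions. This is the exact role reserved for our Equivalence Theorem: weak mean convexity $\Leftrightarrow$ $-\Delta\delta\ge 0$ off the singular set, together with the quantitative distributional upgrade provided by Theorem~\ref{key theorem}.

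First I would invoke Theorem~\ref{key theorem}: since $H\ge 0$ on $\partial\Omega$ and $n-\delta H>0$ on the good set $G$ by Lemma~\ref{positiveness}, the right-hand side of \eqref{key theorem : equation} is a nonnegative bounded function on $G$, and $\Omega\setminus G$ has zero Lebesgue measure. Consequently
\[
\int_\Omega \nabla\delta\cdot\nabla\varphi\,dx \;\ge\; 0 \qquad \text{for every } \varphi\in C^\infty_0(\Omega),\ \varphi\ge 0,
\]
so $-\Delta\delta\ge 0$ in the distribution sense on $\Omega$. By Remark~\ref{hardy sobolev : rema}, this is precisely Condition (C) of \cite{FMT2} with $k=1$, while the standing $C^{2}$ boundary assumption gives Condition (R) in a neighborhood of every compact subset of $\Omega$.

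Next, since every test function $u\in C^\infty_0(\Omega)$ has compact support, I would localize the problem to a bounded weakly mean convex subdomain $\widetilde\Omega\subset\Omega$ containing $\supp u$. Such a domain can be produced by intersecting $\Omega$ with a sufficiently large ball and smoothly capping the cut away from $\supp u$ so as to preserve $H\ge 0$ on the newly created portion of the boundary. On $\widetilde\Omega$, the distance function to $\partial\widetilde\Omega$ coincides with $\delta$ on $\supp u$, and Conditions (C) and (R) of \cite{FMT2} both hold; Theorem~3.4 of \cite{FMT2} then delivers the Hardy--Sobolev--Maz'ya inequality on $\widetilde\Omega$, which passes directly to $\Omega$ because $u$ is supported inside $\widetilde\Omega$.

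The only genuine obstacle is bookkeeping around the possible unboundedness of $\Omega$, since \cite{FMT2} is formulated for bounded domains. The capping construction above handles this, but a more conceptual alternative is to inspect the proof of Theorem~3.4 of \cite{FMT2} directly: it invokes only the distributional superharmonicity of $\delta$ together with the local $C^{2}$ regularity of $\partial\Omega$ near $\supp u$, both of which are available in our setting via Theorem~\ref{key theorem} and the standing hypothesis on $\partial\Omega$. The resulting constant $C=C(n,p,\Omega)$ depends on $\Omega$ through the quantitative lower bound on $-\Delta\delta$ furnished by \eqref{key theorem : equation}.
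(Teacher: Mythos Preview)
Your approach is essentially the same as the paper's: the paper merely states, just before the theorem, that it follows by ``applying Theorem~\ref{equivalence theorem} to Theorem~3.4 in \cite{FMT2}'', and Remark~\ref{hardy sobolev : rema} records that Condition~(C) of \cite{FMT2} with $k=1$ is exactly weak mean convexity via the Equivalence Theorem. Your elaboration via Theorem~\ref{key theorem} to obtain distributional superharmonicity is the same mechanism, and your extra care about the boundedness hypothesis in \cite{FMT2} (localizing to a bounded subdomain containing $\supp u$, as the paper itself does in the proof of Theorem~\ref{key theorem}) is a detail the paper simply omits.
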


\section{Examples}\label{examples}
In this section, we will sample several interesting examples with non-trivial topology, which, of course, are
non-convex.\\

{\bf Example 1.}\label{example 0} Let $\Omega\subset \mathbb R^{n+1}$ be the critical ring torus with minor radius $r=1$ and major radius $R=2$. $H\ge0$ on the boundary $\partial \Omega$. From elementary differential geometry textbooks, e.g., \cite{O}, we can easily calculate all the principal curvatures as below.

\begin{equation}
  \kappa_1=1, \kappa_2=\frac{\cos(\theta)}{2+\cos(\theta)}.
  \label{}
\end{equation}

For simplicity, we denote $a:=\cos(\theta)$. We observe that

\begin{equation}
  \mu(\delta,a):=\frac{-\Delta\delta}{2\delta}= \Big(\frac{1}{1-\delta} +\frac{\frac{a}{2+a}}{1-\frac{a}{2+a}\delta}\Big)\frac{1}{2\delta},
  \label{}
\end{equation}
is a monotonically increasing function of $a$. Hence, for any fixed $\delta$,
\begin{equation}
  \begin{array}[]{rll}
\mu(\delta,a)\ge&\mu(\delta,-1)\\
=&  \Big(\frac{1}{1-\delta} - \frac{1}{1+\delta}\Big)\frac{1}{2\delta}\\
\ge&1
  \end{array}
  \label{}
\end{equation}
which yields $\lambda(n,\Omega) \ge 1$.  Since on the inner equator, $\frac{-\Delta\delta}{2\delta}=1$, we have $\lambda(n,\Omega)=1$.\\

{\bf Example 2.}\label{example 1} Let $\Omega\subset \mathbb R^{n+1}$ be the ring torus with minor radius $r$ and major radius $R$, see the same example when $n=2$ in \cite{BEL}. If $R-2r>0$, then $H>0$ everywhere on the boundary $\partial \Omega$.\\

{\bf Example 3.} \label{example 2} Other examples include mean convex torus with higher genus. \\

{\bf Example 4.}\label{example 3} It is geometrically clear that one can perturb the examples in Example 2 and Example 3 slightly and still keep the mean curvature strictly positive on the boundary $\partial \Omega$. \\

{\bf Example 5.}\label{example 4}
Another example is a domain enclosed by a parabola in a plane or enclosed by an paraboloid in $\mathbb R^{n+1}$, $n\ge2$.
This domain is convex with infinite interior radius. \\

{\bf Example 6.} Lastly, there are domains with an embedded minimal surface as boundary. We notice that Hardy-type inequalities hold on either side of the minimal surface. We now prove Corollary \ref{minimal surface : coro}.

\begin{proof}
  {\bf (Proof of Corollary \ref{minimal surface : coro}.)} For a point $\mathbf y\in \partial\Omega$, let $\kappa_1$, $\kappa_2$ be the principal curvatures. Since $H=\kappa_1+\kappa_2=0$, we can denote the two principal curvatures to be $\kappa$ and $-\kappa$ with $\kappa\ge0$. Since the following inequality holds everywhere in the good set $G$ and on the whole domain $\Omega$ in the distribution sense, we have
  \[
  \frac{-\Delta\delta}{2\delta}=\frac{\kappa^2}{1-\kappa^2\delta^2}\ge \kappa^2.
  \]
  Let $\kappa_0:=\di\inf_{\mathbf y\in\mathcal M}|\kappa(\mathbf y)|$, applying Theorem \ref{main theorem}, and the proof is complete.
\end{proof}

\section{Acknowledgment}  The authors would like to thank G\"unter Stolz and Rudi Weikard for organizing the UAB
Math-Physics seminar where the current work was motivated. Thanks to Pengfei Guan and Michael Loss for their interests. The first and second authors would like to thank Rupert Frank for helpful discussions and for pointing out the oversights in an earlier draft of this paper.

\bigskip
\bigskip

\noindent
{\it Note added}\ 
We are pleased to acknowledge receipt of 
the preprint
``$L^1$ Hardy inequalities
with weights''  by Georgios Psaradakis after completing the work in this paper.
The interesting results of Psaradakis are closely related, but with the emphasis upon results in $L^1$.


\end{document}